\providecommand{\CC}{{\mathbb{C}}}
\providecommand{\RR}{{\mathbb{R}}}
\providecommand{\ZZ}{{\mathbb{Z}}}
\providecommand{\KK}{{\mathcal K}}
\providecommand{\mcA}{{\mathcal{A}}}
\providecommand{\mcB}{{\mathcal{B}}}
\providecommand{\mcI}{{\mathcal{I}}}
\providecommand{\mcO}{{\mathcal{O}}}
\providecommand{\mcS}{{\mathcal{S}}}
\providecommand{\mcW}{{\mathcal{W}}}
\providecommand{\msL}{{\mathscr{L}}}
\providecommand{\msS}{{\mathscr{S}}}
\providecommand{\msW}{{\mathscr{W}}}
\providecommand{\Sch}{{\mathcal{S}}}
\providecommand{\Trh}{{\overline{\mathrm{Tr}}}}
\providecommand{\Alg}{{\mathscr{A}}}
\providecommand{\Symb}{{\mathscr{S}_H}}
\providecommand{\Symbm}{{\mathscr{S}^m}}
\providecommand{\Symball}{{\mathscr{S}^\bullet}}
\providecommand{\Sp}{{\mathrm{Sp}}}
\newcommand{\ang}[1]{\langle #1 \rangle} 
\newcommand{\lra}{\longrightarrow}
\DeclareMathOperator{\Hom}{Hom}
\DeclareMathOperator{\End}{End}
\DeclareMathOperator{\Ch}{Ch}
\DeclareMathOperator{\Td}{Td}
\DeclareMathOperator{\Tr}{Tr}
\DeclareMathOperator{\tr}{tr}
\DeclareMathOperator{\re}{Re} 
\DeclareMathOperator{\Res}{Res} 
\DeclareMathOperator{\Ker}{Ker} 
\DeclareMathOperator{\id}{id}
\DeclareMathOperator{\ind }{Index}
\DeclareMathOperator{\Sym}{Sym}
\DeclareMathOperator{\Op}{Op}
\newcommand{\con}{\boldsymbol{\nabla}}
\newcommand{\curv}{\boldsymbol{\theta}}
\newcommand{\ho}{\mathcal{H}}
\DeclareMathOperator{\iso}{\mu}
\DeclareMathOperator{\Iso}{\nu}
\newtheorem{theorem}{Theorem}[section]
\newtheorem{lemma}[theorem]{Lemma}
\newtheorem{corollary}[theorem]{Corollary}
\newtheorem{proposition}[theorem]{Proposition}
\theoremstyle{definition}
\newtheorem{definition}[theorem]{Definition}
\theoremstyle{remark}
\newtheorem{remark}[theorem]{Remark}
\newtheorem{example}[theorem]{Example}
\numberwithin{equation}{section}
\title{The Heisenberg Calculus, Index Theory and Cyclic (Co)homology}
\author{Alexander Gorokhovsky}
\author{Erik van Erp}
\begin{document}
\maketitle
\begin{abstract}
A hypoelliptic  operator in the Heisenberg calculus on a compact contact manifold is a  Fredholm operator.
Its symbol  determines an element  in the $K$-theory of the noncommutative algebra of Heisenberg symbols.
We construct a periodic cyclic cocycle which, when paired with the Connes-Chern character of the principal Heisenberg symbol,   calculates the index.
Our index formula is local, i.e. given as a local expression in terms of the principal symbol of the operator and a connection on $TM$ and its curvature.
We prove our index formula  by reduction to Boutet de Monvel's index theorem for Toeplitz operators.
\end{abstract}

\setcounter{tocdepth}{1}
\tableofcontents

\section{Introduction}

On  a compact contact manifold, a pseudodifferential operator in the Heisenberg calculus with an invertible symbol is a hypoelliptic Fredholm operator. The index problem for Heisenberg elliptic operators has been considered from various perspectives \cites{EM98, EMxx, vE10a, vE10b, BvE14, Po01}. Our approach here is most closely aligned with that of Epstein and Melrose in \cites{EM98, EMxx}. 
In this paper we give a local formula for the index,  as an expression in terms of the principal symbol of the operator and a connection on $TM$ and its curvature.

\vskip 6pt

\subsection{Heisenberg elliptic operators} 
Let $M$ be a smooth manifold of odd dimension $2n+1$.
A contact form on $M$ is a differential 1-form  $\alpha$ such that $\alpha(d\alpha)^n$ is a nowhere vanishing volume form.
The Reeb  field $T$ is the vector field on $M$ with $\alpha(T)=1$, $d\alpha(T,\,\cdot\,)=0$.
In \cite{FS74}, Folland and Stein  showed how, for the analysis of certain naturally occurring hypoelliptic operators in the theory of several complex variables,  the Reeb field should be treated as a differential operator of order 2.
Only vector fields $X$ with $\alpha(X)=0$ are given order 1.
This filtration of the Lie algebra of vector fields determines a non-standard  filtration on the algebra of differential operators on $M$.
The `highest order part' of an operator $\msL$ on $M$ is realized as a smooth family $\msL_p$, parametrized by $p\in M$,
of translation invariant operators on the Heisenberg group.
In \cite{FS74}, fundamental solutions for the model operators $\msL_p$ are utilized to construct a parametrix for  $\msL$.

The ideas of Folland and Stein are the root of  the Heisenberg pseudodifferential calculus \cites{Ta84, BG88}.
The Heisenberg algebra of a contact manifold is a $\ZZ$-filtered algebra $\Psi_H^\bullet$ of pseudodifferential operators on $M$ in which the Reeb field is an operator of order 2.
If $M$ is compact, operators of Heisenberg order zero $\Psi_H^0$ are bounded on $L^2(M)$,
and operators of negative order $\Psi^{-1}_H$ are compact.
We denote the algebra of principal Heisenberg symbols of order zero by
\[ \Symb := \Psi^0_H/\Psi^{-1}_H\]
$\Symb$ is a noncommutative  algebra. 

The algebra of Heisenberg symbols is as follows.
Let $H=\Ker \alpha$ be the vector bundle of tangent vectors that are annihilated by the contact form $\alpha$.
The restriction of the 2-form $\omega:=-d\alpha$ to a fiber of $H$  is a symplectic form.
Associated to the symplectic bundle $H$ is a bundle of Weyl algebras over $M$, which we denote $\mcW_H$.
The fiber of $\mcW_H$ at $p\in M$ consists of smooth functions on $H_p^*$
that have an asymptotic expansion, modulo Schwartz class functions, as a sum of homogeneous terms of integral order.
The product in the Weyl algebra is such that for two linear functions $f, g$ on $H^*$ (i.e. vectors in $H_p$) the commutator is
\[ f\#g-g\#f = i\omega(f,g)\]
In general,
\begin{equation}\label{sharp product}
    (f\# g)(v)  = \frac{1}{(2\pi)^{2n}}\int_{H_p} e^{2i\,\omega(x,y)}f(v+x)g(v+y) dx\,dy
\end{equation} 
We denote by $\mcW^{op}_H$ the bundle of algebras with the opposite product, i.e. the product of $f, g$ in $\mcW_H^{op}$ is $g\#f$.
The principal Heisenberg symbol $\sigma_H(T)$ of a Heisenberg pseudodifferential operator on $M$
is a pair of smooth functions on $H^*$,
\[ \sigma_H(T)=(\sigma_+,\sigma_-)\in \mcW_H\oplus \mcW_H^{op}\]
(See section \ref{sec:symbols} below.)
$T$ is Heisenberg elliptic if  $(\sigma_+, \sigma_-)$ is invertible in $\mcW_H\oplus \mcW_H^{op}$.
A Heisenberg elliptic operator $T$ has a parametrix (an inverse modulo smoothing operators),
and if $M$ is compact then $T$  is a  Fredholm operator.

\subsection{Index formulas of Epstein and Melrose} 
In this paper we present a formula for the index of a general Heisenberg elliptic operator on a compact contact manifold that  is computable from local data.
The impetus for our paper is the fundamental work of Charles Epstein and Richard Melrose from the late 1990s \cites{EM98, Ep04}. 
We review some  key points of their work.

Epstein and Melrose  derive an index formula for what they call {\em Hermite operators}.
Denote by $\Sch(H^*)$ the algebra of smooth functions on $H^*$ that are Schwartz class in each fiber, with the $\#$ product \eqref{sharp product}.
$P\in \Psi^0_H$ is a Hermite operator if
\[ \sigma^0_H(P)=(\sigma_+,\sigma_-)\in \Sch(H^*)\oplus\Sch(H^*)^{op}\]
A Toeplitz operator is an example of a Hermite operator.
The following formula of Epstein-Melrose \cite{Ep04} generalizes Boutet de Monvel's index formula for Toeplitz operators \cite{Bo79}. If $P$ is Heisenberg elliptic, and  $P-1$ is a Hermite operator, then
\begin{equation}\label{hermite index}
     \ind P=\int_M\Ch(\sigma_+)\wedge \Td(H^{1,0})+(-1)^{n+1}\int_M \Ch(\sigma_-)\wedge \Td(H^{0,1})
\end{equation}
Here $H\otimes_\RR \CC = H^{1,0}\oplus H^{0,1}$ is the splitting of the complexified vector bundle $H\otimes_\RR \CC$
into the $i$ and $-i$ eigenspaces of a complex structure $J\in \End H$, $J^2=-\mathrm{Id}_H$.
Such a complex structure is obtained by reducing the structure group of $H$ from the symplectic group $\mathrm{Sp}(2n)$ to the unitary group $U(n)$.

The definition of the Chern character appearing in \eqref{hermite index} requires some discussion.
Each fiber  of $\Sch(H^*)$
can be represented faithfully by trace class  operators on a Hilbert space
(the Bargmann-Fock space).
This Hilbert space  is obtained by  completion of the symmetric tensors of the complex vector space  $H_p^{1,0}$,
\[ \Sym H^{1,0}=\bigoplus_{j=0}^\infty \Sym^j H^{1,0}\]
For sufficiently large $N$, the compression $\sigma_+^{(N)}$ of $\sigma_+$ to the finite rank vector bundle 
\[\Sym^{(N)} H^{1,0}=\bigoplus_{j=0}^N \Sym^j H^{1,0}\]
is  invertible. 
Thus, $\sigma_+^{(N)}$ is an automorphism of the vector bundle $\Sym^{(N)}H^{1,0}$,
and  defines an element in   $K^1(M)$. 
For large enough $N$, this class in $K$-theory is constant, and 
the cohomology class of $\Ch(\sigma_+)$ is
\begin{equation}\label{chern_chomology}
    [\Ch(\sigma_+)]:=[\Ch(\sigma_+^{(N)})]\in H^{odd}(M)\qquad N>>0
\end{equation} 
A Chern-Weil formula for the differential form $\Ch(\sigma_+)$
is obtained by suspending $\sigma_+^{N}$.
This results in  a vector bundle $E(\sigma_+^{(N)})$ on $M\times S^1$.
One can write an explicit formula for the curvature $\Omega(\sigma_+^{(N)})$ of $E(\sigma_+^{(N)})$.
This formula  depends on a choice of connection for $H^{1,0}$.
Then \eqref{hermite index} is made explicit by using the formulas for $\Omega(\sigma_+^{(N)})$ and taking the limit
\begin{equation}\label{chern odd}
    \Ch(\sigma_+):=  \lim_{N\to \infty} \Tr(\exp(\frac{i}{2\pi}\Omega(\sigma_+^{(N)}))
\end{equation} 
The Chern character  $\Ch(\sigma_-)$ is treated similarly,
except that now $\Sch(H^*)\subset \mcW_H^{op}$ is represented by operators on $\Sym H^{0,1}$.

Epstein and Melrose  derive index formulas for more general  (but not quite all) Heisenberg elliptic operators.
These formulas are less explicit and  more complicated than \eqref{hermite index}.
As Epstein explains in \cite{Ep04}, there are two problems that make it difficult to further generalize \eqref{hermite index}.
The first problem arises if  $\sigma_+-1$ and $\sigma_--1$ converge to  $0$ at infinity, but are not  integrable.
In this case the trace used in the computation of the Chern character no longer converges as $N\to \infty$.
A regularization  of the trace, using zeta function methods,
 results in additional  terms in the index formula.
The second obstacle is encountered in the most general case, if the symbol $(\sigma_+,\sigma_-)$
does not converge to $1$ at infinity. 
In concrete  examples one may obtain index formulas by  reducing to the Hermite case using $K$-theoretic arguments, but no  formula is obtained in this way that applies generally. The interested reader is referred to the   monograph \cite{EMxx}.

\subsection{$K$-theoretic index theorems}

In \cite{vE10a}, the second author adapted  Alain Connes' use of the tangent groupoid   to the index problem for Heisenberg elliptic operators. 
The symbol of a Heisenberg elliptic operator determines, in a canonical way, an idempotent in a noncommutative $C^*$-algebra $M_2(C^*(T_HM))$, which gives an element in $K$-theory.
$T_HM$ is a bundle of Heisenberg groups over $M$.
The  group $K_0(C^*(T_HM))$ is isomorphic (by general results in noncommutative geometry) to $K^0(T^*M)$,
and the Atiyah-Singer index map $K^0(T^*M)\to \ZZ$ is shown to compute the index also in the case of Heisenberg elliptic operators.

It is a highly nontrivial problem  to derive locally computable index formulas from this $K$-theoretic result, because the isomorphism $K_0(C^*(T_HM)\cong K^0(T^*M)$ is not given by an explicit formula.
In \cite{vE10b}, an index formula is derived 
for {\em scalar} operators $P\in\Psi_H^0$,
\begin{equation}\label{scalar}
    \ind  P =\int_M \Ch(\sigma_+\#(\sigma_-)^{-1})\wedge \Td(H^{1,0})
\end{equation} 
It should be mentioned that $\sigma_+\#(\sigma_-)^{-1}$ converges to $1$ at infinity, but not rapidly.
So the cohomological definition \eqref{chern_chomology} applies, while the limit in \eqref{chern odd} does not converge.
A more severe limitation of \eqref{scalar}  is that this formula is {\em incorrect}   for operators that act on sections in vector bundles. 
In many familiar examples, an index formula for scalar operators is valid, with minor changes, for operators acting on vector bundles.
However, this is not the case here.
For an example illustrating this point, see Example 6.5.3 in \cite {BvE14}.

In   \cite{BvE14} the problem is approached in the context of $K$-homology.
A Baum-Douglas geometric $K$-cycle is constructed that corresponds to the analytic $K$-cycle (in $KK$-theory)  determined by a Heisenberg elliptic operator.
In the case of Hermite operators one can  extract a characteristic class formula from this result,
which turns out to be equivalent to \eqref{hermite index}.
For the general case,  
the computation of the $K$-cycle proceeds by showing that every Heisenberg elliptic operator is equivalent, in $K$-homology,
to a Hermite operator.
However, this equivalence in $K$-homology does not resolve the problem of deriving a locally computable index formula in the general case.

Taken together \cites{vE10a, vE10b, BvE14} provide a satisfactory solution of the index problem in the context of $K$-theory. Nonetheless, the problem of finding concrete index formulas was not  advanced much beyond the work of Epstein-Melrose.
For the Heisenberg calculus, the transition from $K$-theoretic index theorems to locally  computable index {\em formulas}  is highly non-trivial.

\subsection{Our  index formula}
As a first step towards a resolution of the difficulties listed by Epstein in \cite{Ep04},
we simplify the formula \eqref{chern odd} of Epstein-Melrose for the odd Chern character.
Our formula for the odd Chern character is taken from  the  thesis of  the first author  \cites{gor1, gor2}.
If  $\sigma_+-1\in \Sch(H^*)$ and $\sigma_+$ is invertible, our formula for the Chern character is
\begin{equation}\label{chern odd new}
\Ch(\sigma_+) = \sum \limits_{l\ge 0} \sum \limits_{i_0, \ldots, i_{2k+1} \ge 0} \left(\frac{-1}{2 \pi i}\right)^{I+l+1} \frac{l!}{(I+2l+1)!} 
\Tr \left( 
\sigma_+^{-1} \theta^{i_0}
\nabla(\sigma_+)\theta^{i_1}\nabla(\sigma_+^{-1}) \dots \nabla(\sigma_+) \theta^{i_{2l+1}}
\right)
\end{equation}
\[
I=i_0+i_1+\cdots+i_{2l+1}
\]
(See Proposition \ref{chern} below.)
The connection $\nabla$ is determined by choosing a unitary connection on $H^{1,0}$,
which determines a connection on the symmetric tensor bundle $\Sym H^{1,0}$.
Then $\nabla$ also determines a connection on   $\Sch(H^*)$, thought of as smooth families of operators on $\Sym H^{1,0}$.
The curvature of $\nabla$ is a 2-form $\theta$ which acts  on the fibers of $\Sym H^{1,0}$.
We may think of $\theta$ as a multiplier of $\Sch(H^*)$, and for $a\in \Sch(H^*)$ we have
\[ \nabla^2(\sigma_+)=[\theta,\sigma_+]\]
In section \ref{section_curvature} below we construct a curvature 2-form $\curv$ which is a multiplier of the combined algebra $\Sch(H^*)\oplus \Sch(H^*)^{op}$, but which differs  from the $\End(\Sym H^{1,0})$-valued curvature 2-form used in the Chern character formula \eqref{chern odd new} by 
a {\em scalar} 2-form.
This subtle change makes it possible to combine the two summands of \eqref{hermite index} into a single formula,
\begin{equation}\label{general index}
     \ind P=\int_M \chi(\sigma_H(P))\wedge \hat{A}(M)
\end{equation}
where, with $\sigma_H(P)=\sigma=(\sigma_+,\sigma_-)$,
 \begin{equation}\label{chi_character}
\chi(\sigma) = \sum \limits_{l\ge 0} \sum \limits_{i_0, \ldots, i_{2k+1} \ge 0} \left(\frac{-1}{2 \pi i}\right)^{I+l+1} \frac{l!}{(I+2l+1)!} \tau \left(  \sigma^{-1} \curv^{i_0} \con(\sigma)\curv^{i_1}\con(\sigma^{-1}) \dots \con(\sigma) \curv^{i_{2l+1}}\right)
\end{equation}
\[
I=i_0+i_1+\cdots+i_{2l+1}
\]
and $\tau$ is the combined trace
\[ \tau(a_+,a_-):=\Tr(a_+)+(-1)^{n+1}\Tr(a_-)\qquad (a_+,a_-)\in \Sch(H^*)\oplus \Sch(H^*)^{op}\] 
Note that we replaced the two Todd classes by $\hat{A}(M)$, which allowed us to combine the terms.
As our computations in section \ref{section_Toeplitz} will show, the exponential factors in
\[ \Td(H^{1,0})=\hat{A}(M)\wedge \exp(\frac{1}{2}c_1(H^{1,0}))\qquad \Td(H^{0,1})=\hat{A}(M)\wedge \exp(-\frac{1}{2}c_1(H^{1,0}))\]
are absorbed in $\chi(\sigma)$ by our choice of $\curv$.

The  main result of our paper is that  formula \eqref{general index} applies {\em without change} to general Heisenberg elliptic operators. 
First, the trace $\tau$ extends to a trace on the full algebra of principal Heisenberg symbols $\Symb$,
\[ \tau(\sigma_+,\sigma_-):=\Trh(\sigma_+)+(-1)^{n+1}\Trh(\sigma_-)\qquad (\sigma_+,\sigma_-)\in \Symb\]
where $\Trh$ is the regularized trace of \cite{EMxx}.
While $\Trh$ is not a trace, $\tau$ is a trace on $\Symb$.
Note that, despite the use of the regularized trace $\Trh$,  our general formula does not require any extra residue terms.
Secondly, the curvature form $\curv$ is not a multiplier of $\Symb$.
We  introduce an algebra $\Alg_H$, larger than $\Symb$ but smaller than $\mcW_H\oplus \mcW^{op}_H$,
\[ \Symb\subset \Alg_H\subset \mcW_H\oplus \mcW_H^{op}\]
The algebra $\Alg_H$ is large enough so that $\curv\in \Omega^2(\Alg_H)$, and small enough so that the trace $\tau$  extends to $\Alg_H$.
Interpreted in this way,  our index formula  \eqref{general index} is valid in the general case.
\vskip 6pt
\noindent{\it Remark.}
$\chi(\sigma_H(P))$ is a closed differential form,
and its cohomology class is independent of the choice of connection on $H$.
In fact, $\chi$ determines a homomorphism,
\[ \chi:K_1(\Symb)\to H^{odd}(M)\]


\subsection{Characters and cycles}
Cyclic (co)homology plays a fundamental role in our approach.
Our formula is obtained from a cyclic cocycle for the symbol algebra $\Symb$. The ingredients of this cocycle are as follows.

\begin{itemize}
\item In section \ref{sec:alg} we construct an algebra $\Alg_H$ that contains  $\Symb$ as a subalgebra.
$\Alg_H$ consists of smooth sections in a bundle of algebras 
associated to $H$,
\[ \mathcal{A}_H := P_H\times_{\mathrm{Sp}(2n)} \Alg\]
Here $P_H$ is the principal $\mathrm{Sp}(2n)$ bundle of symplectic frames of $H$, and  $\Alg$ is the model algebra for the fibers of $\Alg_H$. 
\item 
A symplectic  connection $\nabla$ for $P_H$ determines a connection $\con$ for $\Alg_H$.
This connection extends, in the usual way, to a connection on $\Alg_H$-valued differential forms,
\[ \con: \Omega^k(\Alg_H)\to \Omega^{k+1}(\Alg_H)\]
\item
In section \ref{section_curvature} we show that the curvature of the  connection $\con$ is inner, i.e. there  is an element
\begin{equation*} \curv \in \Omega^2(\Alg_H)\end{equation*}
such that 
\begin{equation*} \con^2(a) = [\curv, a]\end{equation*}
We show that $\con(\curv)=0$.
Thus, the triple $(\Omega(\Alg_H),\con,\curv)$ is a curved dga.
\item In section \ref{section_trace} we construct a graded trace
\begin{equation*} \tau:\Omega(\Alg_H) \to \Omega(M)\qquad \tau(ab) = (-1)^{|a|\,|b|}\tau(ba)\end{equation*}
which satisfies $\tau(\con(\beta))=d\tau(\beta)$.
\end{itemize}

In section \ref{section_character} we explain how to combine these data to obtain a cyclic cocycle over $\Symb$.
If we let 
\[ \stackinset{c}{}{c}{}{-\mkern4mu}\int \beta := \int_M \tau(\beta) \wedge 
\hat{A}(M)\qquad \beta\in \Omega(\Alg_H)\]
then the quadruple 
\[(\Omega(\Alg_H),
\con,\curv,\displaystyle\stackinset{c}{}{c}{}{-\mkern4mu}\int)\]
is a generalized cycle  (as defined in \cites{gor1, gor2}) over $\Symb$ (or, more correctly, a finite sum of generalized cycles).
The character of this generalized cycle is a periodic cyclic cocycle in $ HC_{per}^1(\Symb)$.
We show that this cocycle is continuous.
Thus we can pair it with topological $K$-theory, to  obtain a map
\[ K_1(\Symb)\stackrel{\Ch}{\lra} HC^{per}_1(\Symb)\to \CC\]  
The formulas developed by the first author in \cites{gor1, gor2} for the character of a generalized cycle  yield   our index formula \eqref{general index}.
\begin{theorem}
Let $M$ be a compact smooth manifold of dimension $2n+1$ with contact form $\alpha$. 
We orient  $M$ by the volume form $\alpha(d\alpha)^n$. 
If
\begin{equation*}P:C^\infty(M,\CC^r)\to C^\infty(M,\CC^r)\end{equation*}
is a Heisenberg pseudodifferential operator of order $m\in \ZZ$
that acts on sections in a trivial bundle $M\times \CC^r$, 
with invertible Heisenberg principal  symbol $\sigma^m_H(P)\in M_r(\Symbm)$, then the index of $P$ is
\begin{equation*} \ind  P = \int_M \chi(\sigma^m_H(P))\wedge \hat{A}(M)\end{equation*}
With $\sigma=\sigma^m_H(P)$ the character $\chi(\sigma^m_H(P))$ is as in \eqref{chi_character},
where the connection $\con$ and curvature $\curv$ are as in \eqref{defcon}, \eqref{deftheta}.
\end{theorem}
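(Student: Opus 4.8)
\medskip
\noindent\emph{Proof plan.} The plan is to reduce the statement to the Hermite (Toeplitz-type) case, where the index is computed by the Epstein--Melrose formula \eqref{hermite index} --- itself a consequence of Boutet de Monvel's index theorem \cite{Bo79} --- and then to verify that \eqref{hermite index}, after the two Todd classes are absorbed into $\hat{A}(M)$ by means of the modified curvature $\curv$, is literally our formula. First come the routine reductions: twisting $P$ with a fixed invertible operator in $\Psi^{-m}_H$ reduces to order zero without changing the index, and Morita invariance together with the compatibility of $\ind$ and of the graded trace $\tau$ with the matrix trace reduces to scalar operators. So we may assume $P\in\Psi^0_H$ is scalar with invertible principal symbol $\sigma:=\sigma^0_H(P)\in\Symb^\times$.

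Next, both sides of the asserted identity are homomorphisms $K_1(\Symb)\to\CC$. On the left, two Heisenberg operators with the same invertible principal symbol differ by an element of the compact ideal $\Psi^{-1}_H$, so $\ind P$ depends only on $[\sigma]\in K_1(\Symb)$; homotopy invariance and multiplicativity of the index then make $\ind\colon K_1(\Symb)\to\ZZ$ a homomorphism. On the right, the constructions of sections \ref{section_curvature}--\ref{section_character} produce, from the curved dga $(\Omega(\Alg_H),\con,\curv)$ and the graded trace $\tau$, a \emph{continuous} periodic cyclic cocycle $\chi\in HC^1_{per}(\Symb)$, and $[\sigma]\mapsto\int_M\chi(\sigma)\wedge\hat{A}(M)$ is exactly the pairing of $\Ch[\sigma]$ with this cocycle; continuity makes it a homomorphism $K_1(\Symb)\to\CC$. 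Since $\CC$ is torsion free, it therefore suffices to verify the formula on a family of classes that generates $K_1(\Symb)$ up to finite index.

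Such a family is provided by the $K$-theoretic results of \cite{vE10a,BvE14}: every Heisenberg elliptic operator is, in $K$-homology, equivalent to a Hermite operator, and combining the geometric $K$-cycle of \cite{BvE14} with excision one connects any $[\sigma]\in K_1(\Symb)$ to classes of Hermite symbols up to classes pulled back from the commutative algebra of classical cosphere symbols, on which the formula is the Atiyah--Singer theorem. It thus remains to treat a Hermite operator with $\sigma_H(P)=(\sigma_+,\sigma_-)$, $\sigma_\pm-1\in\Sch(H^*)$. Here \eqref{hermite index} gives $\ind P$, and writing $\Td(H^{1,0})=\hat{A}(M)\exp(\tfrac12 c_1(H^{1,0}))$ and $\Td(H^{0,1})=\hat{A}(M)\exp(-\tfrac12 c_1(H^{1,0}))$, the right-hand side of \eqref{hermite index} becomes $\int_M\bigl(\Ch(\sigma_+)\exp(\tfrac12 c_1)+(-1)^{n+1}\Ch(\sigma_-)\exp(-\tfrac12 c_1)\bigr)\wedge\hat{A}(M)$. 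Now evaluate $\Ch(\sigma_\pm)$ by the Chern--Weil formula \eqref{chern odd new} of Proposition \ref{chern} but with the modified curvature $\curv$ of section \ref{section_curvature} in place of $\theta$: since $\curv$ differs from the $\End(\Sym H^{1,0})$-valued curvature by a \emph{scalar} $2$-form representing $\pm\tfrac12 c_1$, and a scalar form is central, the identity $\con^2=[\curv,\,\cdot\,]$ is unaffected while the Chern--character series is multiplied precisely by $\exp(\pm\tfrac12 c_1)$ --- this is the computation of section \ref{section_Toeplitz}. The $\sigma_+$ and $\sigma_-$ summands then assemble, via the combined trace $\tau$ which carries the sign $(-1)^{n+1}$, into $\chi(\sigma)$ of \eqref{chi_character}, so \eqref{hermite index} equals $\int_M\chi(\sigma)\wedge\hat{A}(M)$ and the theorem follows.

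The main obstacle is that the reduction to the Hermite case must be compatible with \emph{both} sides simultaneously. Invariance of $\ind$ is immediate, but invariance of $\int_M\chi(\sigma)\wedge\hat{A}(M)$ requires that $\chi$ be a genuine continuous cyclic cocycle over $\Symb$ with \emph{no} residue corrections, and this rests on the two structural facts established beforehand: that the regularized trace $\Trh$ of \cite{EMxx}, which is not a trace, nonetheless yields a genuine trace $\tau$ on the enlarged algebra $\Alg_H\supset\Symb$ once the $+$ and $-$ components are combined with the sign $(-1)^{n+1}$, and that $\curv$ belongs to $\Omega^2(\Alg_H)$. Establishing that the tracial defect of $\Trh$ cancels exactly between the two components --- which is precisely what removes the zeta-regularization terms that obstruct the general case in \cite{Ep04} --- is the crux on which the word ``without change'' in the statement depends.
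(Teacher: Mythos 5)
Your overall architecture --- show both sides are homomorphisms on $K_1(\Symb)$, then verify the formula on a generating set using a known Toeplitz/Hermite index theorem --- is the same as the paper's. But there is a genuine gap in how you dispose of the complementary generators. The correct generation statement (Proposition~\ref{Ktheory}) is that $K_1(\Symb)\cong K_1(S_H)$ is generated by symbols of Toeplitz operators together with classes $j_*[g]$ coming from the inclusion $C^\infty(M)\hookrightarrow\Symb$, i.e.\ by vector bundle automorphisms $g\in \mathrm{GL}(r,C^\infty(M))$ --- not by ``classes pulled back from the commutative algebra of classical cosphere symbols.'' For such a $g$ the index is trivially zero, so Atiyah--Singer gives you nothing; what must be proved is that the \emph{right-hand side} vanishes, i.e.\ that $\chi(g)=0$. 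Since $\con(g)=dg$ commutes with $\curv$, this reduces to showing $\tau(\curv^k)=0$ for all $k$ (Lemma~\ref{vb_automorphism}), which in turn rests on the explicit integral formula for $\tau$ (Proposition~\ref{tautilde}) and the vanishing of $\tau$ on polynomial symbols (Corollary~\ref{tau_of_polynomial}). The paper explicitly flags this as the ``perhaps surprisingly non-trivial'' step; your proposal omits it entirely, and without it the verification on generators is incomplete.

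Two secondary points. First, the paper deliberately reduces only to \emph{Toeplitz} symbols $fs+(1-s)$, where $s$ is the rank-one vacuum projection, so that the entire Hermite machinery of \eqref{hermite index} is unnecessary: the input is just Theorem~\ref{EM_Toeplitz} (Boutet de Monvel on contact manifolds), and the matching with \eqref{chi_character} is the clean computation of Theorem~\ref{character_Toeplitz}, whose key point is Lemma~\ref{curv_calculation}, $\mu^{-1}(T)s=\tfrac12\tr(T)s$. Your route through general Hermite symbols is logically admissible if you take \eqref{hermite index} from \cite{EMxx} as a black box, but the claim that ``this is the computation of section~\ref{section_Toeplitz}'' is not accurate: that section only treats the vacuum projection, and for a general $\sigma_+$ with $\sigma_+-1\in\Sch(H^*)$ you would need the scalar-shift (half-form) identity for the full metaplectic action of $\curv$ on $\Sym H^{1,0}$ together with convergence of the regularized traces --- none of which is carried out. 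Second, the reduction from order $m$ to order $0$ must preserve \emph{both} sides; the index is clearly unchanged, but the equality $\chi(Q^{-m/2}\#\sigma_+,\sigma_-\#Q^{-m/2})=\chi(\sigma_+,\sigma_-)$ is a small computation (done in the paper's remark after Theorem~\ref{main_theorem}) that your proposal asserts without proof.
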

In section \ref{section_Toeplitz}, a calculation shows that  in the case of Toeplitz operators our formula reduces to that of Boutet de Monvel \cite{Bo79}.
These same calculations  imply that in the case of Hermite operators \eqref{general index} is equivalent to  \eqref{hermite index}, with the changes discussed above.

To prove that our formula holds in the general case, we use the fact that periodic cyclic homology is homotopy invariant.
It  suffices to prove that our formula   is correct for each equivalence class in $K_1(\Symb)$.
In section \ref{section_Ktheory} we show that the group $K_1(\Symb)$ is generated by vector bundle automorphisms and Toeplitz operators.
In section \ref{section_proof} we prove our index formula.
All that remains to show is that for a vector bundle automorphism
our  formula evaluates to zero.
Perhaps surprisingly, this turns out to be non-trivial, but reduces to proving that
\[ \tau(\curv^k)=0\qquad k=0,1,2,\dots\]
(Lemma \ref{tau_of_curv}).

\subsection*{Acknowledgements}
An important motivation  for our paper is the work of Charles Epstein and Richard Melrose, especially the monograph \cite{EMxx}.
We thank them for sharing their unpublished manuscript, and for several enlightening conversations.
We benefited greatly from conversations with Ryszard Nest, and thank him for his hospitality.

\section{Heisenberg principal symbols}\label{sec:symbols}

In this section we describe the algebra of principal symbols in the Heisenberg  calculus.
This section   serves to fix our notations.
We include a brief  description of Heisenberg symbols of differential operators for the sake of the reader unfamiliar with this calculus.
For  details on the Heisenberg pseudodifferential calculus, see \cites{Ta84, BG88, CGGP92, Ep04}.

\subsection{Contact structures}\label{sec:contact}

Throughout this paper, $M$ is a smooth closed manifold of dimension $2n+1$.
A contact form on $M$ is a 1-form $\alpha$ such that $\alpha(d\alpha)^n$ is a nowhere vanishing volume form.
The Reeb field $T$ is the vector field on $M$ determined by 
\begin{equation*} d\alpha(T,\,-\,) = 0\qquad \alpha(T)=1\end{equation*}
Let $H\subset TM$ be the hyperplane bundle of tangent vectors that are annihilated by $\alpha$.
Then 
 \[ TM\cong H\oplus \underline{\RR}\]
The restriction of the 2-form $\omega:=-d\alpha$ to each fiber $H_p, p\in M$ is a symplectic form,
\begin{equation*}\omega_p(v,w):=-d\alpha(v,w) \qquad v,w\in H_p\end{equation*}
We denote by  $P_H$  the $\mathrm{Sp}(2n)$ principal bundle of symplectic frames in $H$. 
 $P_H$ is equipped with a right action of $\mathrm{Sp}(2n)$.

For certain purposes we shall fix a complex structure $J\in \mathrm{End}(H)$, $J^2=-\mathrm{Id}$,
that is compatible with the symplectic structure, i.e.,
\begin{equation*}\omega(Jv,Jw) = \omega(v,w)\qquad \omega(Jv, v)\ge 0\end{equation*}
Let $H\otimes \CC = H^{1,0}\oplus H^{0,1}$ be the  decomposition of the fibers of $H$ into  $i$ and $-i$ eigenspaces of $J$.
$H^{1,0}$ is a hermitian vector bundle with hermitian form,
\[\ang{v,w}:=\omega(v,\bar{w})\]
We identify the complex vector bundle $H^{1,0}$ with the real vector bundle $H$ 
via $v\mapsto v+\bar{v}$. 
Thus, the structure group of $M$ can be further reduced to $U(n)\subset \mathrm{Sp}(2n)$.

\subsection{Osculating Lie algebras}
On a contact manifold $M$, the symplectic form $\omega_p$
on $H_p$ gives the fibers of the vector bundle
\[ \mathfrak{t}_HM := H\oplus TM/H\]
the structure of a 2-step graded nilpotent  Lie algebra, isomorphic to the Heisenberg Lie algebra.
We trivialize the normal bundle $TM/H$ by the Reeb field. 
Then the fiber at $p\in M$,
\begin{equation*} \mathfrak{t}_HM_p = H_p\oplus \RR T(p)\end{equation*}
is a Lie algebra in which the degree 2 element  $T(p)$ is central, and
\begin{equation*} [v,w]=\omega_p(v,w)T(p)\qquad v,w\in H_p\end{equation*}
Note that if $V,W\in \Gamma(H)$ are two vector fields tangent to $H$, then $\alpha(V)=\alpha(W)=0$ implies
\begin{equation*} \alpha([V,W])=-d\alpha(V,W)\end{equation*}
Therefore 
\[ [V(p),W(p)]=[V,W](p)\,\in T_pM/ H_p\]
where the left hand side is the bracket in $\mathfrak{t}_HM_p$, while 
 $[V,W]$ is the commutator of vector fields.

For each $s>0$ we have an Lie algebra automorphism of $\mathfrak{t}_HM_p$ by parabolic dilation,
\begin{equation*} \delta_s(v,t) := (sv, s^2t)\qquad (v,t)\in H_p\times \RR=\mathfrak{t}_HM_p\end{equation*}

\subsection{Heisenberg symbols  of differential operators}\label{sec:heisenberg diff op}

Let $\mathscr{P}^\bullet$ be the algebra of scalar differential operators on $M$.
The Heisenberg filtration on $\mathscr{P}^\bullet$
is determined by the requirement that vector fields $X\in \Gamma(H)$ are order one operators (as usual),
but the Reeb field $T$ is an order two operator.
Smooth functions $C^\infty(M)$ are central in the associated graded algebra
\[ \mathscr{P}_\bullet = \bigoplus_{k=0}^\infty \mathscr{P}_k \quad \mathscr{P}_k := \mathscr{P}^k/\mathscr{P}^{k-1}\qquad \mathscr{P}^{-1}=0\]
Thus, $\mathscr{P}_\bullet$ is the algebra of smooth sections in a bundle of algebras over $M$.

Denote by $\mathscr{U}(\mathfrak{t}_HM)_p$ the universal enveloping algebra of the Heisenberg Lie algebra $(\mathfrak{t}_HM)_p$, and let $\mathscr{U}(\mathfrak{t}_HM)$ be the bundle over $M$ with fiber $\mathscr{U}(\mathfrak{t}_HM)_p$.
Since  $\mathfrak{t}_HM_p$ is a graded Lie algebra, the universal enveloping algebra $\mathscr{U}_\bullet(\mathfrak{t}_HM)_p$ is a $\ZZ$-graded algebra.

The associated graded algebra $\mathscr{P}_\bullet$ is naturally identified (as a  graded algebra) with the algebra of smooth sections in $\mathscr{U}_\bullet(\mathfrak{t}_HM)$,
\[ \mathscr{P}_\bullet =C^\infty(M;\mathscr{U}_\bullet(\mathfrak{t}_HM))\]
Let $\mathscr{W}_p$ be the (polynomial) Weyl algebra, generated by vectors in $H_p$
with  relations
\[ vw-wv = i\omega_p(v,w)\qquad v,w\in H_p\]
There is a canonical identification of vector spaces given by symmetrization 
\[ \mathscr{W}_p \cong \Sym H_p\quad \sum_{s\in S_k} v_{s(1)}v_{s(2)}\,\cdots\,v_{s(k)}\mapsto \sum_{s\in S_k} v_{s(1)}\otimes v_{s(2)}\otimes \cdots\otimes  v_{s(k)}\]
In this way elements in $\mathscr{W}_p$ are identified with polynomials on $H^*_p$.
If we view  polynomials on $H^*_p$ as elements in $\msW_p$, we write the product  as $\#$ (see \eqref{sharp product}).

Let $\pi$ be the algebra homomorphism 
\[ \pi \colon \mathscr{U}(\mathfrak{t}_HM)_p\to \mathscr{W}_p\]
with $\pi(v)=v$ for all $v\in H_p$, and
\[  \pi(T(p)) = i\]
There is also a  homomorphism to the opposite of the Weyl algebra
\[ \pi^{op} \colon \mathscr{U}(\mathfrak{t}_HM)_p\to \mathscr{W}_p^{op}\]
with $\pi(v)=v$ for $v\in H_p$ and
\[ \pi^{op}(T(p)) = -i\]
Let $\mathscr{W}_H$ denote the bundle of Weyl algebras $\mathscr{W}_p$ over $M$.
For differential operators, the principal Heisenberg symbol  of degree $m$ is the algebra homorphism 
\[ \sigma_H^m \colon \mathscr{P}^m\to  \mathscr{W}_H\oplus \mathscr{W}_H^{op}\]
obtained as the composition 
\[ \mathscr{P}^m\to \mathscr{P}_m=C^\infty(M;\mathscr{U}_m(\mathfrak{t}_HM))\stackrel{(\pi,\pi^{op})}{\lra} \mathscr{W}_H\oplus \mathscr{W}_H^{op}\]
A vector field $X\in \Gamma(H)$ that is tangent to $H$ is a differential operator of Heisenberg order 1.
Its principal symbol is
\[\sigma^1_H(X)=(X,X)\in \msW_H\oplus \msW_H^{op}\]
At each point $p\in M$, the vector $X(p)\in H_p$ is interpreted as a linear function on $H_p^*$.
The Reeb vector field $T$ is a differential operator of Heisenberg order 2.
It's principal symbol is
\[\sigma^2_H(T)=(i,-i)\in \msW_H\oplus \msW_H^{op}\]
If $\mathscr{L}_1$, $\mathscr{L}_2$ are two differential operators of Heisenberg order $m_1$, $m_2$ respectively, then
\[ \sigma_H^{m_1+m_2}(\mathscr{L}_1\mathscr{L}_2) = \sigma_H^{m_1}(\mathscr{L}_1)\sigma_H^{m_2}(\mathscr{L}_2)\]
This follows easily from the fact that for any two vector fields $X, Y\in \Gamma(H)$
the bracket in $\mathfrak{t}_HM$ is defined precisely so that
\[ [\sigma_H^1(X),\sigma_H^1(Y)]=\sigma^2_H([X,Y])\]
Let $\sigma_H^m(\mathscr{L})=(\sigma_+,\sigma_-)$ where $\sigma_+(p)$ and $\sigma_-(p)$ are polynomials  on $H^*_p$.
The highest  order part $[\mathscr{L}]\in \mathscr{P}_m$ is at each point $p\in M$ an element $[\mathscr{L}](p)\in \mathscr{U}_m(\mathfrak{t}_HM_p)$ that is homogeneous of degree $m$.
Thus $[\mathscr{L}](p)$ is a sum of monomials $v_1v_2\cdots v_kT(p)^l$ with $k+2l=m$.
Then $\sigma_+(p)=\pi([\mathscr{L}](p))\in \mathscr{W}_p$ is a polynomial on $H_p^*$ that is  a  sum 
\[ \sigma_+ = \sum_{0\le l\le m/2} \sigma_{m-2l}\]
where $\sigma_{m-2l}(p)$ is homogenoeus of degree $m-2l$.
In other words, $\sigma_+(p)$ is either  an even or odd polynomial  on $H_p^*$.
Moreover, $\sigma_-$ can be recovered from $\sigma_+$ by
\[ \sigma_- = \sum_{0\le l\le m/2} (-1)^l \sigma_{m-2l}\]

\begin{example}
Choose  a compatible complex structure $J$ for $(H,\omega)$.
This  determines a  Euclidean structure for $H$.
Locally, in an open set where $H$ can be trivialized, 
let $X_1, X_2, \dots, X_{2n}$ be vector fields on $M$ that form an orthonormal frame for $H$.
A sublaplacian $\Delta_H$ on $M$ is 
\begin{equation*} \Delta_H = X_1^2+X_2^2+ \cdots+X_{2n}^2\end{equation*}
A global sublaplacian $\Delta_H$ is obtained by a partition of unity.
A different choice of orthonormal frame 
results in a sublaplacian that differs by a vector field that is tangent to $H$.
The Heisenberg principal symbol of $\Delta_H$ is independent of choices.
In the algebra $\msW_H$,
\[X_1\#X_1+X_2\#X_2+ \cdots+X_{2n}\#X_{2n}=Q\]
where $Q$ is the smooth function on $H^*$ which in each fiber is the quadratic polynomial 
\[ Q(v)=\|v\|^2\qquad v\in H_p^*\]
In the Weyl algebra, $Q$ is the harmonic oscillator.
We see that 
\[ \sigma_H^2(\Delta_H) = (Q,Q)\in \msW_H\oplus \msW_H^{op}\]
An example of a Heisenberg elliptic differential operator is as follows.
If $\gamma \colon M\to \CC$ is a smooth function, then the principal Heisenberg symbol of the operator
\[ \mathscr{L}_\gamma = \Delta_H+i\gamma T\]
is
\[ \sigma^2_H(\mathscr{L}_\gamma) = (Q-\gamma,Q+\gamma)\in \msW_H\oplus \msW_H^{op}\]
The operator $\mathscr{L}_\gamma$ is Heisenberg elliptic
if its  principal Heisenberg symbol is invertible
(in the larger algebra $\mcW_H\oplus \mcW_H^{op}$).
The spectrum of the harmonic oscillator is the set 
\[\Lambda=\{n, n+2, n+4,n+6,\dots\}\]
Thus, $Q-\gamma(p)\in \mcW(H^*_P,\omega^*_p)$ is invertible if and only if $\gamma(p)\nin \Lambda$,
while $Q+\gamma(p)\in \mcW(H^*_p,\omega^*_p)^{op}$ is invertible
if and only if $-\gamma(p)\nin \Lambda$.
The operator  $\mathscr{L}_\gamma$ is Heisenberg elliptic 
if and only if the coefficient $\gamma$
does not take values in the set $\Lambda\cup -\Lambda$.

\end{example}

\subsection{Heisenberg symbols of pseudodifferential operators}\label{sec:Hsymb}
The Heisenberg calculus on a contact manifold is a $\ZZ$-graded algebra $\Psi^\bullet_H$ of pseudodifferential operators.
We denote the set of Heisenberg principal symbols of order $m$ as $\Symbm$,
\[ 0\to \Psi_H^{m-1}\to \Psi_H^m\stackrel{\sigma_H^m}{\lra} \Symbm\to 0\]
For a Heisenberg pseudodifferential operator of degree $m$,
the principal symbol $\sigma=\sigma^m_H(T)$ is a smooth function on $(H^*_p\times \RR)\setminus \{(0,0)\}$
that is homogeneous of degree $m$ for the parabolic dilations $\delta$, 
\[ \sigma(sv,s^2t) = s^m\sigma(v,t)\qquad s>0,\;(v,t)\in (H^*_p\times \RR)\setminus \{(0,0)\}\]
We restrict the function $\sigma^m_H(T)$  to the hyperplanes 
\[ H^*_p\times \{t\} \subset H_p^*\times \RR\]
Let $\omega_p^*$ be the symplectic form on $H^*_p$ that is dual to $\omega_p$.
The product of Heisenberg symbols is the product $\#_t$ of elements in the Weyl algebra $\mcW(H_p^*,t\omega^*_p)$ in each hyperplane with $t\ne 0$ (see section \ref{sec:Weylalg} below),
\begin{equation*}
    (f\#_t g)(v)  = \frac{1}{(2\pi)^{2n}}\int_{H^*_p} e^{2i\,t\omega^*(x,y)}f(v+x)g(v+y) dx\,dy
\end{equation*} 
and in the $t=0$ hyperplane it is the pointwise product of functions.

Since a Heisenberg principal symbol $\sigma^m_H(T)$ is $\delta$-homogeneous, it is determined by its degree $m$ and its restriction  to the two hyperplanes with $t=1$ and $t=-1$.
We therefore identity the principal Heisenberg symbol $\sigma^m_H(T)$ with the resulting pair of functions,
\[ \sigma^m_H(T) = (\sigma^m_+(T),\sigma^m_-(T)) \in \mcW_H\oplus \mcW_H^{op}\]
Here $\sigma^m_{+/-}(T)$ are two smooth functions on $H^*$.
$\mcW_H$ is the bundle over $M$ whose fibers are the Weyl algebras $\mcW(H^*_p,\omega^*)$.
$\mcW_H^{op}$ is the bundle whose fibers are the opposite algebras $\mcW(H^*_p,-\omega^*)=\mcW(H^*_p,\omega^*)^{op}$.

The two functions $\sigma_+^m(T), \sigma_-^m(T)$ cannot be chosen randomly.
Assume we have chosen a compatible complex structure $J$ for $H$,
so that, in particular,  $H$ has a Euclidean structure.
Let $\rho_q$  be the function 
\[\rho_q(v,t):=\frac{t}{\|v\|^2}\qquad (v,t)\in H_p^*\times \RR,\; v\ne 0\]
Here $\|v\|$ is the Euclidean length of $v$.
The function $\rho_q$ is constant on parabolic rays in $H^*_p\times \RR$,
i.e. on subsets of the form $\{(sv,s^2t)\mid s>0\}$ with $v\ne 0$.
If we restrict $\rho_q$ to the unit sphere  $S(H^*_p\times \RR)$ we have $\|v\|^2+t^2=1$, and so $\rho_q=t/(1-t^2)$.
Thus, $\rho_q$ can be used as a smooth transverse coordinate near the equator of the unit sphere.
Near the equator $\sigma^m_H(T)$ has a Taylor expansion in powers of $\rho_q$.

On $H_p^*$ we let
\begin{equation*} \rho(v) := \frac{1}{\|v\|}\qquad v\in H^*_p,\; v\ne 0\end{equation*}
When we restrict $\rho_q$ to the hyperplane $H_p^*\times \{+1\}$ we have $\rho_q = \rho^2$,
while on  $H_p^*\times \{-1\}$ we have $\rho_q = -\rho^2$. 
The Taylor expansion of $\sigma^m_H(T)$ near the equator of the unit sphere in powers of $\rho_q$
corresponds to an asymptotic expansion of the functions $\sigma^m_+(T)$ and $\sigma^m_-(T)$ in powers of $\rho^2$ (for large $v$), of the form
\begin{equation*} \sigma(p,v)\sim \sum_{j=0}^\infty \sigma_{2j}(p)\rho(v)^{-m+2j}\qquad p\in M, v\in H_p^*\end{equation*}
and the  expansions of $\sigma_+=\sigma_+^m(T)$ and $\sigma_-=\sigma_-^m(T)$ are related by 
\begin{equation}\label{sigma_compatible}
(\sigma_-)_{2j} = (-1)^j(\sigma_+)_{2j}
\end{equation}
This condition guarantees that a pair of smooth functions $(\sigma_+,\sigma_-)$ on $H^*$ extends to a smooth function on $H^*\oplus \underline{\RR}$ (minus the zero section)  that is homogeneous of degree $m$ in each parabolic ray.
In this paper  we shall represent Heisenberg principal symbols  as such pairs of functions $(\sigma_+,\sigma_-)$.

\begin{remark}
Note that the graded algebra of principal Heisenberg symbols
\[ \Symball:=\bigoplus_{m\in \ZZ}\Symbm\qquad \Symbm:=\Psi^m_H/\Psi^{m-1}_H\]
is not a subalgebra of $\mcW_H\oplus \mcW_H^{op}$,
because $\msS^m\cap \msS^{k}$ is not zero if $m\equiv k\mod 4$.
Rather, we have an algebra homomorphism
\[ \Symball\to \mcW_H\oplus \mcW_H^{op}\]
which is injective in each degree $\Symbm$.
Also note that not all elements in $\mcW_H\oplus \mcW_H^{op}$ are Heisenberg principal symbols.
\end{remark}

\section{An enlarged symbol algebra}\label{sec:alg}

In this section we construct a $\ZZ$-graded algebra $\Alg_H$ that contains the algebra of order zero principal Heisenberg symbols $\Symb$ as a subalgebra.
The motivation for the introduction of $\Alg_H$ is that the curvature of $\Symb$, as a bundle of algebras on $M$,
can be represented as a commutator with a 2-form $\theta\in \Omega^2(\Alg_H)$.

\subsection{The Weyl calculus}
We briefly recall some basic facts about the Weyl calculus. (See \cites{GLS68, Ho79, Sh01}.)

A linear function $L(x,\xi) = \sum a_jx_j+b_j\xi_j$ in $2n$ real variables $x_1,\dots, x_n,\xi_1,\dots,\xi_n\in\RR$ is ``quantized'' as the differential operator
\begin{equation*} \Op(L) = \sum a_jX_j +b_j D_j\qquad D_j = -i\frac{\partial}{\partial x_j}\quad (j=1,\dots,n)\end{equation*}
where $X_j$ denotes multiplication by $x_j$. 
Weyl quantization is characterized by the fact that the exponential function $a(x,\xi) = e^{iL(x,\xi)}$ is quantized as $\Op^w(a) = e^{i\Op(L)}$.
This choice leads by superposition (i.e. Fourier decomposition) to a  formula for the quantization of a general function $a(x,\xi)$.
Formally, in the Weyl calculus the pseudodifferential operator $A=\Op^w(a)$ 
acts on functions $u\in \mcS(\RR^n)$ as 
\begin{equation}\label{eqn:Weyl}
 (Au)(x) = \frac{1}{(2\pi)^n} \iint e^{i(x-y)\cdot \xi} a\left(\frac{x+y}{2},\xi\right)u(y)\,dy\, d\xi
 \end{equation}
The Weyl symbol of the formal adjoint $A^t$ of $A$ is the complex conjugate of $a$,
\begin{equation*} \Op^w(a)^t = \Op^w(\bar{a})\end{equation*}
We shall be interested in the Weyl algebra $\mcW$ of smooth complex valued functions $a(x,\xi)\in C^\infty(\RR^{2n},\CC)$ 
for which   there is an integer $m\in \ZZ$ with,
\begin{equation}\label{seminorms1} 
|(\partial^\alpha_x\partial^\beta_\xi a)(x,\xi)| \le C_{\alpha,\beta} (1+\|x\|^2+\|\xi\|^2)^{(m-|\alpha|-|\beta|)/2}
\end{equation}
for every pair of multi-indices $\alpha, \beta$,
and such that $a(x,\xi)$ has a 1-step polyhomogeneous asymptotic expansion 
\begin{equation}\label{asymptotic expansion} 
a\sim \sum_{j=-m}^\infty a_j\qquad a_j(s x,s \xi)=s^{-j}a_j(x,\xi) \quad s>0, (x,\xi)\ne (0,0)\end{equation}
The integer $m\in \ZZ$ is the Weyl order of the operator $\Op^w(a)$. 
The notation  is as usual:
\begin{equation*}
\alpha = (\alpha_1,\dots, \alpha_{2n}),\quad \alpha_j=0,1,2,\dots\qquad 
|\alpha| = \alpha_1+\cdots + \alpha_{2n}\end{equation*}
\begin{equation*}\partial^\alpha_x = \left(\frac{\partial}{\partial x_1}\right)^{\alpha_1}
\cdots\left(\frac{\partial}{\partial x_n}\right)^{\alpha_n}\qquad 
\partial^\beta_{\xi} = \left(\frac{\partial}{\partial \xi_1}\right)^{\beta_1}
\cdots
\left(\frac{\partial}{\partial \xi_n}\right)^{\beta_{n}}
\end{equation*}
The formal series is an asymptotic expansion in the sense that the difference of $a(x,\xi)$ with a partial sum is bounded as follows.
Fix a cut-off function  $\varphi(x, \xi) \in C^\infty(\mathbb{R}^{2n})$ such that  $0 \le \varphi(x, \xi) \le 1$, $\varphi(x, \xi)=0$ when $\|x\|^2 +\| \xi\|^2 \le 1$, $\varphi(x, \xi)=1$ when $\|x\|^2 +\| \xi\|^2 \ge 2$. Then the expansion above is asymptotic in the sense that 
\begin{equation} \label{seminorms2} 
\left|\partial^\alpha_x\partial^\beta_\xi\left( a(x,\xi) - \varphi(x, \xi)\sum_{j=-m}^{N-1} a_j(x,\xi)\right)\right| \le C_{N, \alpha, \beta} (1+\|x\|^2+\|\xi\|^2)^{-N/2} 
\end{equation}
$a(x,\xi)$ is determined by its  asymptotic expansion modulo a Schwartz class function on $\RR^{2n}$.

We denote by $\mcW^m$ the space of symbols of order $m$,
\begin{equation*} \cdots\subset \mcW^{-2}\subset \mcW^{-1}\subset \mcW^{0}\subset \mcW^{1}\subset \mcW^{2}\subset \cdots\end{equation*}
For $a\in \mcW$, the operator $A=\Op^w(a)$ that is formally defined by  (\ref{eqn:Weyl}) is continuous as a linear map
\begin{equation*} A \colon \mcS(\RR^n)\to \mcS(\RR^n)\end{equation*}
Order zero operators are bounded on $L^2(\RR^n)$.
Operators of negative order are compact.
Operators of order less than $-2n$ are trace class.
The intersection of operators of all orders is the ideal of smoothing operators
with Schwartz kernels $k(x,y)\in \mcS(\RR^n\times \RR^n)$,
\begin{equation*} \mcS(\RR^{2n})=\bigcap_{m\in \ZZ} \mcW^m\end{equation*}
The product of two symbols $a\# b$ is defined indirectly by
\begin{equation*} \Op^w(a\#b) = \Op^w(a)\Op^w(b)\end{equation*}
An explicit formula for this  product  is
\begin{equation*}  (a \# b)(v) = \frac{1}{(2\pi)^{2n}}\iint e^{2i\omega(x,y)} a(v+x)b(v+y) \,dxdy\end{equation*}
where $\omega$ is the standard symplectic form on $\RR^{2n}$
\begin{equation*} \omega = \sum_{j=1}^n dx_j\wedge d\xi_j\end{equation*}
Note that $dx=dy=|\omega|^n/n!$. 

Asymptotically we have
\begin{equation}\label{eqn:asymptotic}
 a\#b \sim \sum_{k=0}^\infty \left(\frac{i}{2}\right)^k (a\# b)_k\qquad (a\# b)_k=\sum_{|\alpha|+|\beta|=k}\frac{1}{\alpha!\beta!}(-1)^{|\beta|}(\partial^\alpha_x\partial^\beta_\xi a)(\partial^\beta_x\partial^\alpha_\xi b)
 \end{equation}
The leading $k=0$ term is the pointwise product 
\begin{equation*} (a\#b)_0=ab\end{equation*}
and the $k=1$ term is the Poisson bracket
\begin{equation*}(a\#b)_1 = \sum_{j=1}^n \frac{\partial a}{\partial x_j}\frac{\partial b}{\partial \xi_j}-\frac{\partial a}{\partial \xi_j}\frac{\partial b}{\partial x_j}=:\{a,b\}\end{equation*}
Note that $ba=ab$ and $\{b,a\}=-\{a,b\}$. 
In general, 
\begin{equation*} (a\#b)_k = (-1)^k(b\#a)_k\end{equation*}
The asymptotic expansion of the product $\#$ is often called  {\em Moyal product}.
If $a, b$ are polynomials in $x,\xi$, the Moyal product is exactly equal to the product $a\#b$. 




\subsection{Moyal product}
Let $\mcB=\mcW/\mcS$ be the algebra of full symbols in the Weyl calculus,
with   quotient map $\lambda \colon \mcW\to\mcB$,
\begin{equation*} 0\to \Sch \to \mcW \stackrel{\lambda}{\longrightarrow} \mcB\to 0\end{equation*}
A Weyl symbol $a\in\mcW$ of degree $m\in \ZZ$ has an asymptotic expansion modulo symbols of smoothing operators in $\Sch(\RR^{2n})$,
as sums of homogeneous terms,
\begin{equation*}a(x,\xi)\sim \sum_{j=-m}^\infty a_j(x,\xi)\qquad a_j(sx,s\xi)=s^{-j}a_j(x,\xi)\end{equation*}
The expansion (\ref{eqn:asymptotic}) determines the product of formal Laurent series in $\mcB$.
If the formal series $a= \sum a_{l}$ and $b= \sum b_{m}$ represent  elements of $\mcB$,
then by (\ref{eqn:asymptotic}) the star product $a\star b=c$ is represented by the formal series
\begin{equation*}a \star b =  \sum  c_p\end{equation*}
where $c_p$ is homogeneous  of degree $-p$, and given by the finite sum
\begin{equation*}c_{p} := \sum_{2k+l+m=p}  B_k(a_{l},b_{m})
\end{equation*}
with
\begin{equation*} B_k(x, y)=\left(\frac{i}{2}\right)^k \sum_{|\alpha|+|\beta|=k}\frac{1}{\alpha!\beta!}(-1)^{|\beta|}(\partial^\alpha_x\partial^\beta_\xi x)(\partial^\beta_x\partial^\alpha_\xi y) \end{equation*}
Since $B_k(a_l,b_m)$ is homogeneous of degree $-l-m-2k$ if $a_l$, $b_m$ are homogeneous of degrees $-l$, $-m$ respectively, it follows that $c_{p}$ is indeed homogeneous of degree $-p$.

\subsection{The model algebra $\Alg$}
Let $\mcB_q\subset \mcB$ be the subset consisting of formal series 
$a=\sum_{l=-m}^\infty a_{2l}$ for which  all the terms $a_{2l}$ are homogeneous of even degree $-2l$.
It is clear from the explicit form of the Moyal product $\star$ that $\mcB_q$ is a subalgebra of $\mcB$.

\begin{definition}
Let $\iota \colon \mcB_q\to\mcB_q$ be the linear map 
\begin{equation*} a=\sum_{j=-m}^\infty a_{2j}\in \mcB_q \qquad \iota(a):= \sum_{j=-m}^\infty (-1)^ja_{2j}\in \mcB_q \end{equation*}
where $a_{2j}=a_{2j}(x,\xi)$ is homogeneous of degree $-2j$.
\end{definition}
\begin{lemma}\label{lem:i}
The map $\iota  \colon  \mcB_q\to\mcB_q$ is an involution,
\begin{equation*} \iota(a\star b) = \iota(b) \star \iota(a)\end{equation*}
\end{lemma}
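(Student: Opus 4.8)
The plan is to prove both assertions by a degree-by-degree comparison of homogeneous components, using only the explicit form of the Moyal product $\star$ recalled above together with the symmetry $(a\#b)_k=(-1)^k(b\#a)_k$. That $\iota$ is well defined as a map $\mcB_q\to\mcB_q$ and that $\iota^2=\mathrm{id}$ are immediate: $\iota$ rescales the degree $-2j$ summand of $a$ by $(-1)^j$, and $(-1)^j(-1)^j=1$. So the only substantive point is the anti-multiplicativity $\iota(a\star b)=\iota(b)\star\iota(a)$.

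First I would record the shape of $\star$ on $\mcB_q$. Writing $a=\sum_l a_{2l}$ and $b=\sum_m b_{2m}$ with $a_{2l},b_{2m}$ homogeneous of degrees $-2l,-2m$, the formula $c_p=\sum_{2k+l+m=p}B_k(a_l,b_m)$ shows that $a\star b=\sum_p c_p$ has $c_p=0$ for $p$ odd and, for $p=2N$,
\[ c_{2N}=\sum_{k+l+m=N}B_k(a_{2l},b_{2m}). \]
(This also re-proves that $\mcB_q$ is a subalgebra and fixes notation.) Consequently the degree $-2N$ component of $\iota(a\star b)$ is $(-1)^N c_{2N}=(-1)^N\sum_{k+l+m=N}B_k(a_{2l},b_{2m})$. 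On the other side, $\iota(b)$ has degree $-2m$ component $(-1)^m b_{2m}$ and $\iota(a)$ has degree $-2l$ component $(-1)^l a_{2l}$, so the degree $-2N$ component of $\iota(b)\star\iota(a)$ is $\sum_{k+l+m=N}(-1)^{l+m}B_k(b_{2m},a_{2l})$. Applying the symmetry of the bidifferential operators $B_k$, namely $B_k(y,x)=(-1)^k B_k(x,y)$ (this is exactly the relation $(a\#b)_k=(-1)^k(b\#a)_k$ of the previous subsection, as the $B_k$ entering $\star$ are the same operators), this becomes $\sum_{k+l+m=N}(-1)^{l+m+k}B_k(a_{2l},b_{2m})$.

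The only thing to watch is the sign bookkeeping, and it is precisely here that the even-degree restriction enters: on each term the constraint $k+l+m=N$ forces $(-1)^{l+m+k}=(-1)^N$, so the last sum equals $(-1)^N\sum_{k+l+m=N}B_k(a_{2l},b_{2m})$, which is exactly the degree $-2N$ component of $\iota(a\star b)$. As $N$ is arbitrary, $\iota(a\star b)=\iota(b)\star\iota(a)$, completing the argument. I do not expect a genuine obstacle; the one subtlety is combining the two signs $(-1)^l,(-1)^m$ produced by $\iota$ on the factors with the sign $(-1)^k$ from reversing the order inside $B_k$ and seeing that they collapse to $(-1)^N$ by homogeneity. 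Conceptually $\iota$ is the formal substitution $v\mapsto iv$ (a term homogeneous of degree $-2j$ picks up $i^{-2j}=(-1)^j$), so one could alternatively argue by a change of variables in $(a\# b)(v)=(2\pi)^{-2n}\iint e^{2i\omega(x,y)}a(v+x)b(v+y)\,dx\,dy$; but the formal-series computation is cleaner and avoids convergence issues.
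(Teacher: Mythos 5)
Your proof is correct and is essentially the paper's own argument: both decompose into homogeneous components, use the symmetry $B_k(x,y)=(-1)^kB_k(y,x)$, and collapse the signs via $(-1)^{k+l+m}=(-1)^N$ on the constraint $k+l+m=N$. The only difference is presentational (component-by-component comparison versus a single chain of equalities over the full sums).
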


\begin{proof}
With $a= \sum a_{2l}$, $b= \sum b_{2m}$ we have
\begin{multline*}
\iota(a \star b) = \\
\sum_p (-1)^p\sum_{2k+2l+2m=2p} B_k(a_{2l},b_{2m}) =
\sum_p \sum_{k+l+m=p} (-1)^k B_k((-1)^la_{2l},(-1)^mb_{2m}) =\\  
\sum_p \sum_{k+l+m=p} B_k((-1)^mb_{2m}, (-1)^la_{2l}) =\iota(b) \star \iota(a)
\end{multline*}
where we used that $B_k(x, y)=(-1)^kB_k(y, x)$.

\end{proof}

We let
\begin{equation*} \mcW_q:=\{w\in \mcW\mid \lambda(w)\in \mcB_q\}\qquad \mcW^{2m}_q:=\{w\in \mcW^{2m}\mid \lambda(w)\in \mcB_q\}\end{equation*}
and 
\begin{equation*} \Alg := \{(w_+,w_-)\in  \mcW_q\oplus \mcW_q^{op}\mid \lambda(w_+) =\iota \circ \lambda(w_-)\}\end{equation*}
Here $\mcW^{op}_q$ is the opposite algebra of $\mcW_q$, i.e. the product of elements in $\Alg$ is
\begin{equation*} (u_+,u_-)(w_+,w_-):= (u_+\#w_+,w_-\# u_-)\end{equation*}
That $\Alg$  is  an algebra follows from Lemma \ref{lem:i},
\begin{equation*} \iota \circ \lambda(w_-\#u_-)=\iota(\lambda(w_-)\star\lambda(u_-))=\iota(\lambda(u_-))\star\iota(\lambda(w_-))=\lambda(u_+)\star \lambda(w_+)=\lambda(u_+\#w_+)\end{equation*}
Note that $\Alg$ is a $\ZZ$-graded algebra. For $m\in \ZZ$, we let 
\begin{equation*} \Alg^{2m} := \{(w_+,w_-)\in  \mcW^{2m}_q\oplus \mcW^{2m}_q\mid \lambda(w_+) =\iota \circ \lambda(w_-)\}\end{equation*}

\subsection{The Weyl  algebra of a symplectic vector space}\label{sec:Weylalg}
As can be seen from the explicit formula for the $\#$ product, a linear symplectic  transformation $\phi \colon \RR^{2n}\to\RR^{2n}$  acts on the algebra 
$\mcW$ by automorphisms. For $\phi \in \Sp(\RR^{2n})$, $a\in \mcW$ let $\phi(a):= a \circ \phi^{-1}$.
Then
\begin{equation*} \phi(a)\# \phi(b) = \phi(a\#b).\end{equation*}
We may therefore define the Weyl algebra  for a general  finite dimensional symplectic vector space $V$ with symplectic form $\omega$.
The Weyl  algebra $\mcW(V,\omega)$ is the $\ZZ$-filtered algebra consisting of smooth functions  $f\in C^\infty(V)$ that have an asymptotic expansion
\begin{equation*} f \sim \sum_{j=-m}^\infty f_j  \qquad f_j(s v) = s^{-j}f(v)\quad v\ne 0,\;s>0\end{equation*}
The action of the symplectic group $\Sp(2n)$ preserves the ideal of smoothing operators $\mcS$ and therefore induces an action of $\Sp(2n)$ on the quotient $\mcB=\mcW/\mcS$ so that
\begin{equation*}
\phi(\lambda (f)) = \lambda (\phi(f)), \ f \in \mcW.
\end{equation*}
If we identify elements of $\mcB$ with the asymptotic expansions $\sum_{j=-m}^\infty f_j$ the action is given by $\phi(\sum_{j=-m}^\infty f_j)= \sum_{j=-m}^\infty f_j\circ \phi^{-1}$. It follows that the action of $\Sp(2n)$ preserves the subalgebra $\mcB^0$ of $\mcB$ as well as the subalgebra $\mcW_q$ of $\mcW$. The action of $\Sp(2n)$ on $\mcB^0$ is compatible with the anti-involution $\iota$:
\begin{equation*}
\phi (\iota (f)) = \iota (\phi(f)), \ f \in \mcB^0
\end{equation*}
As a consequence, for any symplectic vector space $(V, \omega)$ we obtain a well defined algebra $\mcB(V, \omega)$, homomorphism $\lambda  \colon  \mcW(V, \omega) \to \mcB(V, \omega)$, subalgebra $\mcB^0 (V, \omega) \subset \mcB (V, \omega)$, anti-involution $\iota$ of $\mcB^0(V, \omega)$, subalgebra $\mcW_q(V, \omega) \subset \mcW(V, \omega)$, and finally the algebra
\begin{equation*} \Alg(V,\omega) := \{(w_+,w_-)\in  \mcW_q(V,\omega)\oplus \mcW_q^{op}(V,\omega)\mid \lambda(w_+) =\iota \circ \lambda(w_-)\}\end{equation*}
The symplectic group $\Sp(V)$ acts on all these algebras and $\lambda$, $\iota$ are equivariant with respect to this action.

\subsection{The algebra $\Alg_H$ for contact manifolds}
On a contact manifold $M$ with contact form $\alpha$, the fibers of the bundle $H=\Ker \alpha$ are symplectic vector spaces $(H_p,\omega_p)$, with $\omega:=-d\alpha$.
We denote by $\mcW_H$ the algebra of smooth sections in the bundle over $M$
whose fiber at $p\in M$ is $\mcW(H_p^*,\omega^*_p)$.
As in section \ref{sec:symbols}, the principal Heisenberg symbol of an operator $\msL$ of order $m$ is an element 
\[ \sigma^m_H(\msL)=(\sigma_+,\sigma_-)\in \mcW_H\oplus \mcW_H^{op}\]
We denote by $\Alg_H$ the algebra of smooth sections in the bundle over $M$
whose fiber at $p\in M$ is $\Alg(H_p^*,\omega^*_p)$.
Principal Heisenberg symbols of order zero are a subalgebra of $\Alg_H$,
\[ \Symb\subset \Alg_H\subset \mcW_H\oplus \mcW_H^{op}\]
For $m\in \ZZ$, we likewise define $\Alg^m_H\subset \Alg_H$ with fiber $\Alg^m(H_p^*,-\omega_p)$.
Then,
\[ \Symb=\Alg^0_H\]

\section{A curved dga}\label{section_curvature}

For a contact manifold $M$, a symplectic connection on the bundle $H\subset TM$
determines a connection $\con \colon \Alg_H\to \Omega^1(\Alg_H)$.
In this section we construct a 2-form $\curv\in \Omega^2(\Alg_H)$ with  
$\con^2(a)=[\curv,a]$ and $\con(\curv)=0$.
The triple $(\Omega^\bullet(\Alg_H), \con, \curv)$ is a curved dga.

\subsection{The action of the symplectic Lie algebra on $\Alg$}


Let $[a,b]:= a\#b-b\#a$ denote the commutator of elements in the Weyl algebra $\mcW(V,\omega)$
of a symplectic vector space $(V,\omega)$.
The asymptotic expansion (\ref{eqn:asymptotic}) of $a\# b$ is an equality if $a$ is a  polynomial.
In particular, if $a$ is a homogeneous polynomial of degree 2 then $[a,b] = i\{a,b\}$.
If, moreover, $b$ is a homogeneous function on $V$ then 
$i\{a,b\}$ is homogeneous of the same degree as $b$.

Let $\mathfrak{g}\subset \mcW$ be the subspace of  homogeneous polynomials of degree $2$ that are purely imaginary (i.e. with values in $i\RR$).
If  $X, Y \in \mathfrak{g}$ then   $[X,Y] =i\{X, Y\}$ is purely imaginary and homogeneous of degree 2.
Thus, $[X,Y]\in \mathfrak{g}$, and so $\mathfrak{g}$ is a (real) Lie algebra.


Let $V^*:=\Hom(V, \mathbb{R})\subset \mcW$ be the (real) dual space of $V$.
If   $X\in \mathfrak{g}$ and $f \in V^*$ then  $[X, f] =\{iX,f\}$ is real-valued and homogeneous of degree 1, and so  $[X,f]\in V^*$.
We obtain a morphism of Lie algebras,
\begin{equation*}
\mu^*  \colon  \mathfrak{g} \to \End V^* \qquad \mu^*(X):= [X,\,\cdot\,]
\end{equation*}
The map $V\ni v \mapsto \omega(v, \cdot) \in V^*$ establishes an isomorphism $V \to V^*$ which can be used to define a symplectic form $\omega^*$  on $V^*$.
For $f$, $g \in V^*$ we have 
$ \{f, g\}=\omega^*(f, g)\cdot 1$,
where  $1\in \mcW$ is a constant function on $V$.
 
\begin{lemma}
If  $X \in \mathfrak{g}$ then $\mu^*(X) \in \mathfrak{sp}(V^*)$.
The map $ \mu^*  \colon  \mathfrak{g} \to \mathfrak{sp}(V^*)$ is a Lie algebra isomorphism.
\end{lemma}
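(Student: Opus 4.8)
The plan is to establish the two assertions — that $\mu^*$ lands in $\mathfrak{sp}(V^*)$, and that it is bijective — using nothing beyond the Jacobi identity for the commutator $[\,\cdot\,,\,\cdot\,]$ in $\mcW(V,\omega)$ together with the two facts already recorded just above the statement: that $[X,f]\in V^*$ whenever $X\in\mathfrak g$ and $f\in V^*$, and that $[f,g]=\{f,g\}=\omega^*(f,g)\cdot 1$ is a scalar for $f,g\in V^*$.

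For the first assertion, fix $X\in\mathfrak g$ and $f,g\in V^*$. Since $[f,g]$ is a constant multiple of $1$ and $[X,1]=0$, the Jacobi identity
\[ [X,[f,g]]=[[X,f],g]+[f,[X,g]]\]
has vanishing left-hand side, so $[[X,f],g]+[f,[X,g]]=0$. Because $[X,f],[X,g]\in V^*$, each bracket on the right is again given by $\omega^*$, whence
\[ \omega^*\!\big(\mu^*(X)f,\,g\big)+\omega^*\!\big(f,\,\mu^*(X)g\big)=0,\]
which is precisely the condition $\mu^*(X)\in\mathfrak{sp}(V^*)$. That $\mu^*$ is a morphism of Lie algebras was already noted (again a consequence of Jacobi), so it remains only to check bijectivity.

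For injectivity, suppose $\mu^*(X)=0$, i.e. $[X,f]=\{iX,f\}=0$ for every linear function $f$ on $V$. The Hamiltonian vector field of the quadratic form $iX$ is linear, and the vanishing of its pairing with every linear function forces it — hence $iX$ — to be constant; being homogeneous of degree $2$, $iX=0$, so $X=0$. Surjectivity then follows by a dimension count: as a real vector space $\mathfrak g$ is the space of quadratic forms on the $2n$-dimensional space $V$, so $\dim_\RR\mathfrak g=\binom{2n+1}{2}=n(2n+1)=\dim_\RR\mathfrak{sp}(V^*)$, and an injective linear map into a space of equal dimension is onto. I do not anticipate a real obstacle here; the only point demanding care is the reality/homogeneity bookkeeping that guarantees each bracket lands in the claimed space, and if one prefers to avoid the dimension count one may instead exhibit explicit quadratic Hamiltonians mapping onto a standard basis of $\mathfrak{sp}(V^*)$ — the classical identification of $\mathfrak{sp}$ with quadratic Hamiltonian functions.
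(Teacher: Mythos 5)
Your proof is correct and follows essentially the same route as the paper: the symplectic property via the Jacobi identity applied to $[X,[f,g]]$ (the paper phrases this with the Poisson bracket, $i\{X,\{f,g\}\}=0$, which is the same computation since the commutator of low-degree polynomials reduces to the Poisson bracket), injectivity via the vanishing of the Hamiltonian vector field of $iX$, and surjectivity by the dimension count $\dim\mathfrak g=n(2n+1)=\dim\mathfrak{sp}(V^*)$.
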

\begin{proof}

If $X\in \mathfrak{g}$ and $f$, $g \in V^*$ then
\begin{equation*}(\omega^*(\iso(X)f, g)+ \omega^*(f, \iso(X)g))\cdot 1 =  i\{\{X,f\}, g\}+i\{f, \{X,g\}\}=
i\{X, \{f, g\}\}=0
\end{equation*}
This proves that $\mu^*(X)$ is in $\mathfrak{sp}(V^*)$.

The Lie algebra morphism $\mu^*$ is injective. 
$\mu^*(X) = \{iX,\,\cdot \,\}$ acts by the Hamiltonian vector field of $iX$,
which is zero only if $X$ is constant, i.e. $X=0$.
Surjectivity follows from the observation that $\dim \mathfrak{g}= n(2n+1) =\dim \mathfrak{sp}(V^*)$, where $2n =\dim V$.

\end{proof}
We will identify the groups of symplectic transformations of $\mathrm{Sp}(V)\cong \mathrm{Sp}(V^*)$ as well as the corresponding Lie algebras $\mathfrak{sp}(V)\cong \mathfrak{sp}(V^*)$ via the isomorphism $V\to V^*$: $v \mapsto \omega(v, \cdot)$.
We obtain a corresponding isomorphism of Lie algebras
\[ \iso \colon \mathfrak{g}\to \mathfrak{sp}(V)\]
As in  section \ref{sec:Weylalg}, the symplectic group $\mathrm{Sp}(V)$ acts  on $\mcW(V,\omega)$ by algebra automorphisms.
Therefore the Lie algebra $\mathfrak{sp}(V)$ acts on $\mcW(V,\omega)$ by derivations,
\[ (\phi.w)(v):=-w(\phi(v))\qquad v\in V,\, w\in \mcW(V,\omega),\;\phi \in \mathfrak{sp}(V)\]


\begin{lemma}\label{act}
The action of the Lie algebra $\mathfrak{sp}(V)$  on $\mcW(V,\omega)$ is by inner derivations,
\[ \phi.w = [\mu^{-1}(\phi),w]\qquad \phi\in \mathfrak{sp}(V),\;w\in \mcW\] 
\end{lemma}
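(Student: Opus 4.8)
The plan is to verify the identity $\phi.w = [\mu^{-1}(\phi), w]$ by checking it on linear functions, where it holds essentially by construction, and then propagating it to all of $\mcW$ by using that both sides are derivations that are determined by their action on $V^*$.

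\emph{Step 1: Reduce to linear functions.} Write $X = \mu^{-1}(\phi) \in \mathfrak{g}$, so that $\phi = \iso(X)$. Both maps $w \mapsto \phi.w$ and $w \mapsto [X,w] = \{iX, w\}$ are derivations of the algebra $(\mcW(V,\omega), \#)$: the first because $\phi \in \mathfrak{sp}(V)$ acts by the derivative of an action by algebra automorphisms (as recalled just before the lemma), and the second because $X$ is a fixed element, so $[X,-]$ is an inner derivation. Since $V^*$ generates $\mcW(V,\omega)$ as an algebra (polynomials are dense, and a Weyl symbol is determined modulo Schwartz functions by its homogeneous expansion, each term of which is built from linear functions), it suffices to check that the two derivations agree on $f \in V^*$.

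\emph{Step 2: The linear case.} For $f \in V^*$ we have, by definition of $\mu^*$ and the identification $\mathfrak{g} \cong \mathfrak{sp}(V^*)$, that $[X, f] = \mu^*(X) f$, the image of $f$ under the symplectic endomorphism $\mu^*(X)$ of $V^*$. Transporting through the isomorphism $V \to V^*$, $v \mapsto \omega(v,\cdot)$, this is exactly $\iso(X) = \phi$ acting on $f \in V^* \subset \mcW$; and by the definition of the $\mathfrak{sp}(V)$-action, $(\phi.f)(v) = -f(\phi(v))$, which is the infinitesimal version of $f \mapsto f \circ \phi^{-1}$. So one must check that the linear map $f \mapsto -f\circ\phi$ on $V^*$ coincides with $\mu^*(X)$ under these identifications — but this is precisely the content of the isomorphism $\iso \colon \mathfrak g \to \mathfrak{sp}(V)$ already established, applied with a sign bookkeeping between the action on $V$ and the dual action on $V^*$. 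This step is a short computation with $\{f,g\} = \omega^*(f,g)\cdot 1$ and the Jacobi identity, essentially a recap of the preceding lemma.

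\emph{Step 3: Extend to all of $\mcW$.} Having matched the two derivations on $V^*$, and knowing both respect the $\#$-product and are continuous for the symbol topology (both preserve each filtration step $\mcW^m$ and act on the asymptotic expansions term by term — the $\mathfrak{sp}(V)$-action because it preserves homogeneity, the bracket $[X,-]$ because $X$ is a degree-$2$ polynomial so $[X,w] = i\{X,w\}$ is homogeneous of the same degree as $w$ when $w$ is), they agree on the subalgebra generated by $V^*$, hence on all homogeneous terms, hence on all of $\mcW(V,\omega)$ modulo $\Sch$; finally one observes the identity also holds on $\Sch$ by the same term-by-term argument, or simply notes that $[X,-]$ kills $\Sch$ into $\Sch$ and the $\mathfrak{sp}$-action does too, and the expansions determine the symbol up to $\Sch$ — but since we want equality of actual functions, not classes, we invoke that a polynomial-coefficient derivation like $[X,-]$ is honestly given by $i\{X,-\}$, an exact differential operator, matching the exact vector-field action $\phi.w = -\,(d w)(\phi(\cdot))$.

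\emph{Main obstacle.} The only real subtlety is Step 3: making sure the identity is an equality of genuine elements of $\mcW(V,\omega)$ and not merely of their images in $\mcB = \mcW/\Sch$. The cleanest route is to observe that when $X$ is a homogeneous quadratic polynomial, $[X, w] = i\{X, w\}$ holds \emph{exactly} (the Moyal expansion for $a\#b$ terminates when $a$ is a polynomial, and for a quadratic only the order-$0$ and order-$1$ terms survive in the commutator), so $[X,-]$ is literally the first-order differential operator $i\{X,-\}$; likewise $\phi.w$ is literally $-\langle dw, \phi(\cdot)\rangle$, also a first-order differential operator with linear coefficients. Two such operators agreeing on linear functions agree everywhere, and this sidesteps any passage to the quotient. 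I expect the sign in matching $\mu^*$ on $V^*$ with the $\mathfrak{sp}(V)$-action on $V \subset \mcW$ to be the fiddliest point to get right.
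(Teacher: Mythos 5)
Your proposal is correct and, in its final form (the ``Main obstacle'' paragraph), is exactly the paper's argument: both sides are first-order differential operators --- $[X,\cdot]=i\{X,\cdot\}$ exactly because $X$ is quadratic, and $\phi.w$ is differentiation along a linear vector field --- so agreement on $V^*$ implies agreement everywhere, and the linear case is the identification of the dual action with $\mu^*$. The ``derivations agree on a generating set'' route of Steps 1 and 3 is the shakier detour you yourself flag, and the paper skips it entirely in favor of the vector-field argument.
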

\begin{proof}
The two actions $w\mapsto \phi.w$ and $w\mapsto [\mu^{-1}(\phi),w]$ of $\mathfrak{sp}(V)$ on $\mcW(V,\omega)\subset C^\infty(V)$ are given by differentiation along vector fields on $V$. Therefore it is sufficient to verify that the corresponding vector fields coincide. To verify that two vector fields coincide it is sufficient to compare their actions on linear functions on $V$, i.e. their actions on $V^* \subset \mcW(V,\omega)$. 

Note that the action of $\mathfrak{sp}(V)$ on $V^*$ that is dual to its canonical action on $V$ is equal to the action of $\mathfrak{sp}(V)$ on $V^*$ determined by the isomorphism $V\to V^*$, $v\mapsto \omega(v,\cdot)$.
Therefore, if $X=\mu^{-1}(\phi)$ then the action of $\phi=\mu(X)\in \mathfrak{sp}(V)$ on $V^*$
is equal to the action of $\mu^*(X)\in \mathfrak{sp}(V^*)$ on $V^*$,
\[ \phi.f = \mu(X)f=\mu^*(X)f=[X,f]\qquad f\in V^*\]

\end{proof}

The symplectic group $\mathrm{Sp}(V)$  acts on $\Alg(V,\omega)$ by automorphisms,
and the Lie algebra $\mathfrak{sp}(V)$ acts on $\Alg(V,\omega)$ by derivations.
Lemma \ref{act} implies that with $\phi\in \mathfrak{sp}(V)$ and $(w_+,w_-)\in \Alg(V,\omega)$,
\begin{multline*}
    \phi.(w_+,w_-) = (\phi.w_+,\phi.w_-)=([\mu^{-1}(\phi),w_+],[w_-,\mu^{-1}(\phi)])=\\
    ([\mu^{-1}(\phi),w_+],[-\mu^{-1}(\phi),w_-])=
    [(\mu^{-1}(\phi),-\mu^{-1}(\phi)), (w_+,w_-)]
\end{multline*} 
Note that since $\mu^{-1}(\phi)\in \mathfrak{g}$ is homogeneous of order 2,
we have $(\mu^{-1}(\phi),-\mu^{-1}(\phi))\in \Alg$.
Let $\nu$ be the map
\begin{equation}\label{Iso}
\nu \colon \mathfrak{sp}(V) \to \Alg \qquad \Iso(\phi):= (\iso^{-1}(\phi), -\iso^{-1}(\phi)) 
\end{equation}
The map $\nu$ is a morphism of Lie algebras.
\begin{proposition}\label{action_is_inner}
The action of the Lie algebra  $\mathfrak{sp}(V)$ on $\Alg(V,\omega)$ is by inner derivations, 
\begin{equation*}
\phi.(w_+,w_-) = [\Iso(\phi), (w_+,w_-)]\qquad \phi \in \mathfrak{sp}(V),\;(w_+,w_-)\in \Alg(V,\omega)
\end{equation*}
\end{proposition}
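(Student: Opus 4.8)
The plan is to reduce the statement to Lemma~\ref{act}, which already says that the $\mathfrak{sp}(V)$-action on a single Weyl algebra $\mcW(V,\omega)$ is inner, implemented by $\mu^{-1}(\phi)$. Since the action on $\Alg(V,\omega)\subset\mcW_q(V,\omega)\oplus\mcW_q^{op}(V,\omega)$ is the restriction of the componentwise action $\phi.(w_+,w_-)=(\phi.w_+,\phi.w_-)$, and since $\nu(\phi)=(\mu^{-1}(\phi),-\mu^{-1}(\phi))$ lies in $\Alg(V,\omega)$ (as noted above, because $\mu^{-1}(\phi)$ is homogeneous of degree $2$, so $\iota$ carries it to its negative), it suffices to compare the two sides slot by slot.

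First I would unwind the commutator in $\Alg(V,\omega)$. Because the product in $\Alg$ is $(u_+,u_-)(w_+,w_-)=(u_+\#w_+,\,w_-\#u_-)$, a direct computation gives $[(u_+,u_-),(w_+,w_-)]=\bigl([u_+,w_+],\,-[u_-,w_-]\bigr)$, where both commutators on the right are taken in $\mcW(V,\omega)$. Substituting $(u_+,u_-)=(\mu^{-1}(\phi),-\mu^{-1}(\phi))$ and cancelling the two minus signs in the second slot yields $[\nu(\phi),(w_+,w_-)]=\bigl([\mu^{-1}(\phi),w_+],\,[\mu^{-1}(\phi),w_-]\bigr)$. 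Next I would invoke Lemma~\ref{act} in each slot, which rewrites $[\mu^{-1}(\phi),w_\pm]$ (commutator in $\mcW(V,\omega)$) as $\phi.w_\pm$. Comparing, $[\nu(\phi),(w_+,w_-)]=(\phi.w_+,\phi.w_-)=\phi.(w_+,w_-)$, which is the assertion. This is essentially the displayed chain of identities preceding the statement, so the argument is short once the pieces are in place.

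The only delicate point — the part most easily gotten wrong — is the sign bookkeeping: the second component of $\Alg$ carries the opposite product, so its commutator is the negative of the commutator in $\mcW$, and this is exactly compensated by taking $-\mu^{-1}(\phi)$ rather than $\mu^{-1}(\phi)$ in the second slot of $\nu(\phi)$. Getting this sign right is also what makes $\nu$ (rather than $\phi\mapsto(\mu^{-1}(\phi),\mu^{-1}(\phi))$) into a Lie algebra homomorphism into $\Alg$, consistent with the computation displayed before the proposition.
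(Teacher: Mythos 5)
Your argument is correct and is essentially the paper's own proof: the paper establishes the proposition by exactly the displayed chain of identities preceding the statement, namely applying Lemma~\ref{act} componentwise and absorbing the sign from the opposite product in the second slot into the $-\mu^{-1}(\phi)$ appearing in $\nu(\phi)$. Your sign bookkeeping (commutator in $\Alg$ equals $([u_+,w_+],-[u_-,w_-])$ with both brackets in $\mcW$) and the check that $\nu(\phi)\in\Alg$ are both right.
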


\subsection{The curvature of $\Alg_H$}\label{symplectic bundle}
Let $M$ be a smooth  manifold of dimension $2n+1$ with a contact  1-form $\alpha$.
We use the notation established in section \ref{sec:contact}.
In particular, recall that $H=\Ker \alpha\subset TM$ is a symplectic bundle with symplectic form $\omega=-d\alpha$, and that $P_H$ is the principal $\mathrm{Sp}(2n)$-bundle of symplectic frames of $H$.

The symplectic group $\mathrm{Sp}(2n)$ acts on the algebra $\Alg$ by automorphisms.
Let $\mathcal{A}_H$ be the associated bundle
\[\mathcal{A}_H:= P_H\times_{Sp(2n)}\Alg\]
We identify $\Alg_H$ with the algebra of smooth sections of the bundle $\mathcal{A}_H$.
Smooth sections in $\mathcal{A}_H$
are  smooth $\mathrm{Sp}(2n)$-invariant functions from $P_H$ to $\Alg$,
\begin{equation*}
\Alg_H = C^\infty(M;\mathcal{A}_H) =C^\infty(P_H; \Alg)^{Sp(2n)}
\end{equation*}
where the action of $\phi \in \mathrm{Sp}(2n)$ on $s  \colon P_H\to  \Alg$ is given by $(\phi \cdot s)(p):= \phi(s(p\phi))$.
In other words, $s$ is invariant if $s(p\phi)=\phi^{-1}(s(p))$.

Let  $\Omega^\bullet(P_H; \Alg)_{basic}$ be the space of basic $\Alg$-valued forms. 
Recall that an $\Alg$-valued  differential form  $\eta$ on $P_H$ 
is called basic if:
\begin{itemize}
    \item $\eta$ is horizontal, i.e. $\iota_X \eta=0$ for every vertical vector field $X$ on $P_H$;
    \item $\eta$ is $\mathrm{Sp}(2n)$ invariant, i.e. $\phi^* \eta = \phi^{-1}(\eta)$ for every $\phi \in \mathrm{Sp}(2n)$. 
\end{itemize}
$k$-forms with values in the bundle $\mathcal{A}_H$ are, by definition, basic $\Alg$-valued $k$-forms.
We denote
\begin{equation*}
 \Omega^k(\Alg_H) = \Omega^k(M;\mathcal{A}_H) = \Omega^k(P_H; \mathcal{A}_H)_{basic}
\end{equation*}
A symplectic connection $\nabla$ on $H$ can be represented by a  connection $1$-form 
\[ \beta \in  \Omega^1(P_H; \mathfrak{sp}(2n))_{basic}\cong \Omega^1(M, \mathfrak{sp}(H))\]
The curvature of $\nabla$ is
\[ \theta := d\beta +\frac{1}{2}[\beta, \beta] \in \Omega^2(P_H, \mathfrak{sp}(2n))_{basic}\cong \Omega^2(M, \mathfrak{sp}(H))\]
The 1-form $\beta$ defines a  covariant derivative 
\begin{equation*}
\con  \colon  \Alg_H \to  \Omega^1(\Alg_H)
\end{equation*}
by
\begin{equation*}
\con (a) :=da +\beta \cdot a\in \Omega^1(\Alg_H) \qquad a \in  \Alg_H = C^\infty(P_H; \Alg)^{Sp(2n)}
\end{equation*}
This covariant derivative extends to a derivation
\begin{equation*}
\con  \colon  \Omega^k(\Alg_H) \to \Omega^{k+1}(\Alg_H)
\end{equation*}
by the same formula,
\begin{equation*}
\con (\eta) :=d\eta +\beta \cdot \eta  
\end{equation*}
The curvature of $\con$ is
\begin{equation*}
\con^2 (\eta)= \theta\cdot \eta
\end{equation*}
By Proposition \ref{action_is_inner},
\begin{equation}\label{defcon}
\con (\eta) = d\eta +[\Iso(\beta), \eta] \qquad \con^2 (\eta) = [\nu(\theta), \eta] 
\end{equation}
Define
\begin{equation}\label{deftheta}
\curv:= \Iso (\theta) \in  \Omega^2(\Alg_H)  
\end{equation}
so that
\begin{equation*}
    \con^2(\eta) = [\curv,\eta]\qquad  \eta\in \Omega^\bullet(\Alg_H)
\end{equation*}

\begin{lemma} With the definitions above we have
\begin{equation*}
    \con(\curv)=0\\
\end{equation*}
\end{lemma}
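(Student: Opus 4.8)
The plan is to verify $\con(\curv)=0$ by pulling everything back to the principal bundle $P_H$ and using the Bianchi identity for the symplectic connection. Recall that $\curv = \Iso(\theta)$, where $\theta = d\beta + \tfrac12[\beta,\beta] \in \Omega^2(P_H;\mathfrak{sp}(2n))_{\mathrm{basic}}$ is the curvature form of the symplectic connection and $\Iso\colon \mathfrak{sp}(V)\to\Alg$ is the Lie algebra morphism of \eqref{Iso}. Since $\Iso$ is linear, it extends to $\mathfrak{sp}(2n)$-valued forms, and one checks (because $\Iso$ intertwines the $\Sp(2n)$-actions, as noted after \eqref{Iso} and in section \ref{sec:Weylalg}) that $\Iso$ sends basic forms to basic forms, so $\curv = \Iso(\theta)$ is indeed a legitimate element of $\Omega^2(\Alg_H)$.

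First I would compute $\con(\curv)$ using the formula $\con(\eta) = d\eta + [\Iso(\beta),\eta]$ from \eqref{defcon}. Applying this to $\eta = \Iso(\theta)$ gives
\[
\con(\Iso(\theta)) = d\,\Iso(\theta) + [\Iso(\beta), \Iso(\theta)].
\]
Since $\Iso$ is linear and commutes with $d$ on forms valued in the fixed vector space $\mathfrak{sp}(2n)$, we have $d\,\Iso(\theta) = \Iso(d\theta)$. Since $\Iso$ is a Lie algebra morphism, $[\Iso(\beta),\Iso(\theta)] = \Iso([\beta,\theta])$, where the bracket on the right is the bracket of $\mathfrak{sp}(2n)$-valued forms (combining the graded-commutative wedge with the Lie bracket). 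Hence
\[
\con(\Iso(\theta)) = \Iso\bigl(d\theta + [\beta,\theta]\bigr).
\]
The expression $d\theta + [\beta,\theta]$ is precisely the exterior covariant derivative $d^\beta \theta$ of the curvature form, which vanishes by the second Bianchi identity $d^\beta\theta = 0$. Therefore $\con(\curv) = \Iso(0) = 0$.

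The only genuinely delicate point is making sure the bracket bookkeeping is consistent: the commutator $[\Iso(\beta),\eta]$ in \eqref{defcon} is the commutator in the algebra $\Omega^\bullet(\Alg_H)$ (graded by form degree, with the Weyl-algebra product on coefficients), whereas $[\beta,\theta]$ on $P_H$ is the graded bracket of $\mathfrak{sp}(2n)$-valued forms. What makes these match is exactly that $\Iso$ is a morphism of Lie algebras intertwining the actions — precisely the content leading up to Proposition \ref{action_is_inner} — together with the fact that on degree-$2$-homogeneous elements of $\mcW$ the $\#$-commutator reproduces the Lie bracket of $\mathfrak{g}\cong\mathfrak{sp}(V)$, used with sign care for the $w_-$ (opposite-algebra) component. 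I would write out the bracket of $\mathfrak{sp}(2n)$-valued forms explicitly, note that $\Iso$ applied termwise converts it into the commutator of $\Alg$-valued forms, and then the identity $\con(\curv)=\Iso(d^\beta\theta)=0$ follows. I expect the sign and grading verification to be the main (though routine) obstacle; the conceptual core — Bianchi plus linearity of $\Iso$ — is immediate.
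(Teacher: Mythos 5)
Your proof is correct and follows exactly the paper's argument: since $\Iso$ is a Lie algebra morphism commuting with $d$, one has $\con(\Iso(\eta))=\Iso(d\eta+[\beta,\eta])=\Iso(\nabla\eta)$, and the Bianchi identity $\nabla\theta=0$ finishes it. The additional bookkeeping remarks are fine but not needed beyond what Proposition \ref{action_is_inner} already provides.
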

\begin{proof}
Since $\nu \colon \mathfrak{sp}(2n)\to \Alg$ is a Lie algebra morphism, we have 
\begin{equation*}
\con (\Iso(\eta)) =d \Iso(\eta) +[\Iso(\alpha), \Iso(\eta)] =\Iso(d \eta+[\alpha, \eta])=\Iso(\nabla(\eta)).
\end{equation*}
for any $\eta \in \Omega^k(P_H, \mathfrak{sp}(2n))_{basic}$.
Hence
\begin{equation*}\con(\curv) = \con (\nu (\theta)) = \nu (\nabla(\theta)) =0, \end{equation*}
since by the Bianchi identity $\nabla(\theta)=0$.

\end{proof}

Finally, let us record the dependence of $\con$ and $\curv$ on the choice of symplectic connection.
\begin{lemma}\label{changeofconcurv}
Let $\nabla' = \nabla +\kappa$ be another symplectic connection on $H$ with $\kappa \in \Omega^1(M, \mathfrak{sp}(H))$, and let $\con'$, $\curv'$ be as in  \eqref{defcon} and \eqref{deftheta}. Then,
\begin{align}
&\con'= \con + [\boldsymbol{\kappa}, \,\cdot\,]\\
&\curv'=\curv+\con(\boldsymbol{\kappa})+\boldsymbol{\kappa}^2
\end{align}
where $\boldsymbol{\kappa}:= \Iso(\kappa)\in \Omega^1(\Alg_H)$.
\end{lemma}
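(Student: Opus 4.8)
The plan is to trace the effect of the substitution $\nabla' = \nabla + \kappa$ through the definitions \eqref{defcon} and \eqref{deftheta}, exploiting the fact that $\nu\colon \mathfrak{sp}(2n)\to\Alg$ is a Lie algebra morphism. First I would treat the connection. At the level of connection $1$-forms on $P_H$ we have $\beta' = \beta + \kappa$, so $\nu(\beta') = \nu(\beta) + \nu(\kappa) = \Iso(\beta) + \boldsymbol{\kappa}$. Substituting into $\con'(\eta) = d\eta + [\nu(\beta'),\eta]$ and using the formula $\con(\eta) = d\eta + [\nu(\beta),\eta]$ from \eqref{defcon}, the $d\eta$ and $[\nu(\beta),\eta]$ terms assemble into $\con(\eta)$ and the remaining term is $[\boldsymbol{\kappa},\eta]$, giving the first identity $\con' = \con + [\boldsymbol{\kappa},\,\cdot\,]$. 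One small point to be careful about here is the graded sign in the bracket $[\boldsymbol{\kappa},\eta]$ for forms of mixed degree, but since $\kappa$ and hence $\boldsymbol{\kappa}$ is a $1$-form this is the standard graded commutator of $\Alg_H$-valued forms and requires no separate argument.

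Next I would handle the curvature. The standard formula for how the curvature $2$-form transforms under $\nabla' = \nabla+\kappa$ is $\theta' = \theta + \nabla\kappa + \tfrac12[\kappa,\kappa]$, where $\nabla\kappa = d\kappa + [\beta,\kappa]$ is the covariant derivative of $\kappa\in\Omega^1(M,\mathfrak{sp}(H))$ with respect to $\nabla$; this is a routine computation from $\theta' = d\beta' + \tfrac12[\beta',\beta']$. Applying the Lie algebra morphism $\nu$ and using that it intertwines brackets and (as shown in the proof of the preceding lemma) satisfies $\con(\nu(\eta)) = \nu(\nabla\eta)$, I get
\begin{equation*}
\curv' = \nu(\theta') = \nu(\theta) + \nu(\nabla\kappa) + \tfrac12\nu([\kappa,\kappa]) = \curv + \con(\boldsymbol{\kappa}) + \tfrac12[\boldsymbol{\kappa},\boldsymbol{\kappa}].
\end{equation*}
It remains to identify $\tfrac12[\boldsymbol{\kappa},\boldsymbol{\kappa}]$ with $\boldsymbol{\kappa}^2$: for an $\Alg_H$-valued $1$-form $\boldsymbol{\kappa}$, the graded commutator is $[\boldsymbol{\kappa},\boldsymbol{\kappa}] = \boldsymbol{\kappa}\wedge\boldsymbol{\kappa} - (-1)^{1\cdot 1}\boldsymbol{\kappa}\wedge\boldsymbol{\kappa} = 2\boldsymbol{\kappa}^2$, where $\boldsymbol{\kappa}^2$ denotes the square in the algebra $\Omega^\bullet(\Alg_H)$ (which uses both the wedge on forms and the $\#$-product on $\Alg$). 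This gives the second identity.

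The only genuine content beyond bookkeeping is the compatibility of $\nu$ with covariant differentiation, $\con\circ\nu = \nu\circ\nabla$, but this is exactly the computation carried out in the proof of the lemma that $\con(\curv)=0$, so I may quote it. Thus there is no real obstacle; the mild care needed is just in the graded-sign conventions for brackets of $\Alg_H$-valued forms and in recalling that the product in $\Omega^\bullet(\Alg_H)$ combines the exterior product on the form part with the $\#$-product on the $\Alg$ part, so that $\boldsymbol{\kappa}^2$ is not zero even though $\boldsymbol{\kappa}$ has odd degree. I would present the two displays above as the heart of the proof and leave the sign verifications as one-line parentheticals.
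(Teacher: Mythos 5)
Your proposal is correct and follows essentially the same route as the paper's proof: the first identity by direct substitution into \eqref{defcon}, and the second by applying the Lie algebra morphism $\nu$ to the standard transformation formula $\theta' = \theta + \nabla(\kappa) + \tfrac12[\kappa,\kappa]$, using $\con\circ\nu = \nu\circ\nabla$ and $\tfrac12[\boldsymbol{\kappa},\boldsymbol{\kappa}] = \boldsymbol{\kappa}^2$. The extra attention you give to graded signs merely spells out details the paper leaves implicit.
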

\begin{proof}
The first identity follows immediately from the equation \eqref{defcon}. For the second, notice that the curvature $\theta'$ of $\nabla'$ is given by $$\theta'=\theta+ \nabla(\kappa)+\frac{1}{2}[\kappa, \kappa]$$
Hence $$\curv' = \curv+ \nu(\nabla(\kappa))+\frac{1}{2}[\nu(\kappa), \nu(\kappa)]=  \curv+\con(\boldsymbol{\kappa})+\boldsymbol{\kappa}^2$$
\end{proof}

\section{The trace}\label{section_trace}

If $\sigma_+,\sigma_-\in \Sch(H^*)$ are Schwartz class functions, then let $\tau$ be the combined trace
\[ \tau(\sigma_+,\sigma_-):=\Tr(\sigma_+)+(-1)^{n+1}\Tr(\sigma_-)\]
In this section we show how $\tau$ extends to a trace $\tau \colon \Alg\to \CC$,
and determines a graded trace
\[ \tau \colon \Omega^\bullet(\Alg_H)\to \Omega^\bullet(M)\]
with the property $\tau(\nabla(a))=d\tau(a)$.
We also prove the important identity
\[\tau(\curv^k)=0\qquad k=0,1,2,\dots\]

\subsection{Complex orders in the Weyl calculus}

The definition of Weyl symbols and Weyl pseudodifferential operators can be extended to include  complex orders. 
For $z \in \mathbb{C}$, $\mcW^z$ consists of smooth complex valued functions $a(x,\xi)\in C^\infty(\RR^{2n},\CC)$ 
such that 
\begin{equation*} |(\partial^\alpha_x\partial^\beta_\xi a)(x,\xi)| \le C_{\alpha,\beta} (1+\|x\|^2+\|\xi\|^2)^{(\re z-|\alpha|-|\beta|)/2}\end{equation*}
for every pair of multi-indices $\alpha, \beta$,
and where  $a(x,\xi)$ admits an  asymptotic expansion 
\begin{equation*} a\sim \rho^{-z}\sum_{j=0}^\infty a_j,\qquad a_j(s x,s \xi)=s^{-j}a_j(x,\xi) \quad s>0, (x,\xi)\ne (0,0)\end{equation*}
Here $\rho^{-z} = e^{-z \log \rho}$ with real valued $\log \rho \in \mathbb{R}$. 
For $z_1, z_2 \in \mathbb{C}$ we have
\begin{equation*}\mcW^{z_1} \# \mcW^{z_2} \subset \mcW^{z_1+z_2}\end{equation*}
We are mainly interested in the subset $\mcW_q^z \subset \mcW^z$ consisting of symbols $b\in \mcW^z$ which admit a step-2 polyhomogeneous expansion:
\begin{equation*} b\sim \rho^{-z} \sum_{j=0}^\infty b_{2j},\qquad b_{2j}(s x,s \xi)=s^{-2j}b_{2j}(x,\xi)\end{equation*}
It is immediate from the composition formula that
\begin{equation*}
\mcW_q^{z_1} \# \mcW_q^{z_2} \subset \mcW_q^{z_1+z_2}\qquad 
\end{equation*}


\subsection{The harmonic oscillator and its complex powers}
We denote by $\ho$ the harmonic oscillator
\begin{equation*}
\ho= \sum \limits_{j=1}^n \left(-\frac{\partial^2}{\partial x_j^2}+x_j^2\right)= \Op^w\left(\sum \limits_{j=1}^n (\xi_j^2+x_j^2)\right).
\end{equation*}
$\ho$ is a strictly positive selfadjoint operator.
The spectral theorem allows one to define for $t>0$ an operator $e^{-t\ho}$. It follows from Mehler's formula that
\begin{equation*}e^{-t\ho} = \Op^w(h_t)\qquad h_t\in \mcS(\RR^{2n})\end{equation*}
where 
\begin{equation}\label{Mehler}
h_t(x,\xi)=\frac{1}{(\cosh t)^n}e^{-(\|x\|^2 +\|\xi\|^2)\tanh t}    
\end{equation}
Using the spectral theorem one can also define complex powers $\ho^{-z}$ for $z \in \mathbb{C}$ . 
It is well known  (see e.g. \cites{guil}) that $\ho^{-z}$  is a Weyl pseudodifferential operator of order $-2z$,
\begin{equation*}\ho^{-z}= \Op^w(h^{-z})\qquad h^{-z}\in \mcW^{-2z}\end{equation*}
and $h^{-z}=h^{-z}(x, \xi)$ is an entire function of $z$.
This means, more precisely, that $z \mapsto (1+\|x\|^2+\|\xi\|^2)^zh^{-z}$ is a holomorphic function on $\mathbb{C}$ with values in the Frechet space $\mcW^0$.
Let 
\begin{equation*}h^{-z} \sim \rho^{2z}\sum_{j=0}^\infty  h_{j}(z)
\end{equation*}
be the asymptotic expansion of $h^{-z}$, where $h_{j}(z) = h_{j}(z, x, \xi)$ is homogeneous of degree $-j$ in $(x,\xi)\ne (0,0)$.

\begin{proposition} \label{entireh} 
The asymptotic expansion of  $h^{-z}$ is
 4-step homogeneous. More precisely, there exist entire functions $b_k(z)$, $k=0,1,2,\dots$ such that 
\begin{equation*}h_{j}(z) = \begin{cases}   b_{j/4}(z) \rho^{j} &\text{ if $j$ is divisible by $4$}\\
0 &\text{ otherwise}
\end{cases} 
\end{equation*}

\end{proposition}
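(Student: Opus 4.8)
\textit{Proof plan.} The idea is to reduce everything to a one–variable computation by writing down an explicit integral formula for the Weyl symbol $h^{-z}$ and then invoking Watson's lemma.

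First I would recall that, for $\re z>0$, the spectral calculus gives $\ho^{-z}=\frac{1}{\Gamma(z)}\int_0^\infty t^{z-1}e^{-t\ho}\,dt$, the integral converging in operator norm since $\ho\ge 1$. Applying $(\Op^w)^{-1}$, using Mehler's formula \eqref{Mehler}, and using that $\Op^w$ commutes with the convergent integral, one gets
\[
h^{-z}(x,\xi)=\frac{1}{\Gamma(z)}\int_0^\infty t^{z-1}(\cosh t)^{-n}\,e^{-(\|x\|^2+\|\xi\|^2)\tanh t}\,dt .
\]
The decisive point is that the right-hand side depends on $(x,\xi)$ only through $Q:=\|x\|^2+\|\xi\|^2=\rho^{-2}$. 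Thus $h^{-z}=F_z(\rho^{-2})$, where $F_z(q):=\frac{1}{\Gamma(z)}\int_0^\infty t^{z-1}(\cosh t)^{-n}e^{-q\tanh t}\,dt$. Since a polyhomogeneous expansion is uniquely determined, it suffices to compute the asymptotic expansion of the scalar function $F_z(q)$ as $q\to+\infty$ and then substitute $q=\rho^{-2}$; radial symmetry automatically makes each term a scalar multiple of a power of $\rho$, with no angular dependence.

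Next I would substitute $u=\tanh t$, so $dt=du/(1-u^2)$, $(\cosh t)^{-n}=(1-u^2)^{n/2}$, $t=\operatorname{arctanh}u$, giving $F_z(q)=\frac{1}{\Gamma(z)}\int_0^1(\operatorname{arctanh}u)^{z-1}(1-u^2)^{\frac n2-1}e^{-qu}\,du$. Near $u=0$ one has $\operatorname{arctanh}u=u\,g(u^2)$ with $g$ analytic, $g(0)=1$, hence $(\operatorname{arctanh}u)^{z-1}(1-u^2)^{\frac n2-1}=u^{z-1}\sum_{k\ge0}e_k(z)u^{2k}$ with $e_0\equiv1$ and each $e_k(z)$ a polynomial in $z$ (expand $e^{(z-1)\log g(u^2)}(1-u^2)^{\frac n2-1}$, a product of even analytic functions of $u$). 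Splitting $\int_0^1=\int_0^{1/2}+\int_{1/2}^1$, the second piece is $O(e^{-q/2})$, and on the first Watson's lemma (term-by-term integration of the convergent series with the usual remainder bound, then extending the upper limit to $\infty$ at the cost of an $O(q^{-N})$ error, and similarly for $q$-derivatives) yields
\[
F_z(q)\sim\sum_{k\ge0}b_k(z)\,q^{-z-2k},\qquad b_k(z):=e_k(z)\,\frac{\Gamma(z+2k)}{\Gamma(z)}=e_k(z)\,z(z+1)\cdots(z+2k-1),
\]
each $b_k$ a polynomial in $z$, hence entire, with $b_0\equiv1$. Substituting $q=\rho^{-2}$,
\[
h^{-z}=F_z(\rho^{-2})\sim\sum_{k\ge0}b_k(z)\,\rho^{\,2z+4k}=\rho^{2z}\sum_{k\ge0}b_k(z)\,\rho^{4k},
\]
and comparing with $h^{-z}\sim\rho^{2z}\sum_j h_j(z)$, using that $\rho^{4k}$ is homogeneous of degree $-4k$ and uniqueness of the expansion, gives $h_{4k}(z)=b_k(z)\rho^{4k}$ and $h_j(z)=0$ for $4\nmid j$ — which is the statement, with the $b_{j/4}$ there being the $b_{j/4}$ built here. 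To pass from $\re z>0$ to all $z\in\CC$, one uses that each homogeneous component $h_j(z)$ depends holomorphically on $z$ (part of the standard package on complex powers recalled before the Proposition), so these identities extend by analytic continuation, while $b_k$ is already entire.

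The step I expect to be the main obstacle is the bookkeeping in the application of Watson's lemma in this parameter-dependent situation: one must control the remainders in the expansion of $F_z(q)$ and of its $q$-derivatives uniformly enough to match, term by term and in the symbol topology, the polyhomogeneous expansion of $h^{-z}$ whose existence is already granted. The remaining ingredients — the passage to the integral formula via \eqref{Mehler}, the change of variables, and the analytic continuation in $z$ — are routine.
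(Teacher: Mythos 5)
Your proposal is correct and follows essentially the same route as the paper: the Mellin representation $\Gamma(z)h^{-z}=\int_0^\infty t^{z-1}(\cosh t)^{-n}e^{-Q\tanh t}\,dt$ via Mehler's formula, the substitution $u=\tanh t$, expansion of the resulting even analytic factor in powers of $u^2$, term-by-term integration against $e^{-u/\rho^2}$ to produce $\rho^{2z}\sum_k b_k(z)\rho^{4k}$, and analytic continuation from $\re z>0$ to all of $\CC$. The only (immaterial) difference is where you truncate — the paper cuts the $t$-integral at a small $\varepsilon$ before the change of variables so that the tail is manifestly an entire $\mcS$-valued function, whereas you split at $u=1/2$ afterwards.
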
 
\begin{proof}
Information about the homogeneous terms $h_{j}$ can be obtained from Mehler's formula as follows. 
For $\re z >0$
\begin{equation*}
\Gamma(z)  \ho^{-z}= \int_0^\infty t^{z-1}  e^{-t\ho}dt
\end{equation*}
and so
\begin{equation*}
\Gamma(z) h^{-z}(x, \xi) =  \int_0^\infty   \frac{t^{z-1}}{(\cosh t)^n}e^{-(\|x\|^2 +\|\xi\|^2)\tanh t}dt
\end{equation*}
Fix any $\varepsilon >0$ sufficiently small, as specified precisely below. Note that 
\begin{equation*}\int_\varepsilon^\infty   \frac{t^{z-1}}{(\cosh t)^n}e^{-(\|x\|^2 +\|\xi\|^2)\tanh t}dt\end{equation*}
is an entire function of $z$ with values in $\mcS(\mathbb{R}^{2n})$. 
Therefore, to determine the asymptotic expansion of $h^{-z}$ in powers of $\rho$ we must analyze 
\begin{equation*}\int_0^\varepsilon  \frac{t^{z-1}}{(\cosh t)^n}e^{-(\|x\|^2 +\|\xi\|^2)\tanh t}dt =\int_0^\varepsilon  \frac{t^{z-1}}{(\cosh t)^n}e^{-\frac{\tanh t}{\rho^2}}dt\end{equation*}
By the change of variables $u = \tanh t$ we rewrite this integral as
\begin{equation*}
 \int_0^\epsilon  u^{z-1}\phi_z(u)e^{-\frac{u}{\rho^2}} du\end{equation*}
 where $\epsilon = \tanh(\varepsilon)$ and
 \begin{equation*}\phi_z(u)= \left(\frac{\tanh^{-1}(u)}{u}\right)^{z-1}(1-u^2)^{(n-2)/2}
\end{equation*}
The function $\phi_z(u)$ is  analytic near $u=0$, while $\phi_z(0)=1$ and $\phi_z(-u) = \phi_z(u)$.
Hence 
\begin{equation*}
\phi_z(u) = \sum_{k=0}^\infty a_k(z)u^{2k}\quad a_0(z)=1
\end{equation*}
where $a_k(z)$ are  entire functions of $z$.
Chose $\varepsilon>0$ so that $\phi_z(u)$ is analytic for  $|u| \le \epsilon =\tanh(\varepsilon)$. 
Since for every $k$ we have \begin{equation*}\int_\epsilon^\infty  u^{z-1}u^{2k}e^{-\frac{u}{\rho^2}} du =\mcO(\rho^\infty)\qquad  \text{as}\; \rho \to 0\end{equation*}
it follows that
\begin{multline*}
    \int_0^\epsilon  u^{z-1} \left( \sum_{k=0}^N a_k(z)u^{2k} \right)e^{-\frac{u}{\rho^2}} du = \int_0^\infty  u^{z-1} \left( \sum_{k=0}^N a_k(z)u^{2k} \right)e^{-\frac{u}{\rho^2}} du +\mcO(\rho^\infty) = \\
    \rho^{2z}\left(\sum_{k=0}^N a_k(z) \Gamma(z+2k)\rho^{4k}\right) +\mcO(\rho^\infty)
\end{multline*}
$\mcO(\rho^\infty)$ means $\mcO(\rho^m)$ for all $m>0$.
Now $\left| \phi_z(u)  -\sum_{k=0}^N a_k(z)u^{2k}\right| =\mcO(u^{2k+2}) $, and hence
\begin{equation*} \left|\int_0^\epsilon  u^{z-1} \left( \phi_z(u)-\sum_{k=0}^N a_k(z)u^{2k}\right)e^{-\frac{u}{\rho^2}} du \right|=\mcO(\rho^{2 \re z +4k+4}).\end{equation*}
We conclude that for $\re z >0$
\begin{equation*}
h^{-z} \sim \frac{1}{\Gamma(z)}\sum_{k=0}^\infty a_k(z) \Gamma(z+2k)\rho^{2z+4k}
\end{equation*}
and hence (for $\re z >0$)
\begin{equation*}h_{j}(z) = \begin{cases} \left(\prod \limits_{k=0}^{j-1} (z+k)\right) a_{j/4}(z) \rho^{j} &\text{ if $j$ is divisible by $4$}\\
0 &\text{ otherwise}.
\end{cases} 
\end{equation*}
Since both sides are entire functions of $z$ the equality holds for any $z \in \mathbb{C}$.

\end{proof}


As an application of Proposition \ref{entireh} we obtain the following result.

\begin{proposition}
\label{automorphism}
Let $w_+, w_- \in \mcW_q$ be such that $\iota \circ \lambda (w_+) = \lambda(w_-)$. Then
\begin{equation*}
\iota \circ \lambda (h^{z} \# w_+ \#  h^{-z}) = \lambda (h^{-z} \# w_- \#  h^{z}) \end{equation*}
In other words, if $(w_+,w_-)\in \Alg$ then $(h^{z} \# w_+ \#  h^{-z}, h^{-z} \# w_- \#  h^{z})\in \Alg$.
\end{proposition}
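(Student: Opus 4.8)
The plan is to deduce the identity from two ingredients: that conjugation by $h^{\pm z}$ preserves $\mcW_q$, and that the anti-involution $\iota$ acts on $\lambda(h^{\pm z})$ by a \emph{scalar}. Granting both, the displayed identity is a one-line computation, and the clause ``in other words'' follows at once.

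\emph{Step 1: both products lie in $\mcW_q$.} By Proposition \ref{entireh} the symbol $h^{-z}$ has the $4$-step polyhomogeneous expansion $h^{-z}\sim\rho^{2z}\sum_{k\ge0}b_k(z)\rho^{4k}$, which is in particular $2$-step, so $h^{-z}\in\mcW_q^{-2z}$; applying the same proposition with $z$ replaced by $-z$ gives $h^{z}\in\mcW_q^{2z}$. Since $w_+,w_-\in\mcW_q$, say $w_\pm\in\mcW_q^{m_\pm}$, the composition rule $\mcW_q^{z_1}\#\mcW_q^{z_2}\subset\mcW_q^{z_1+z_2}$ recorded above gives $h^z\#w_+\#h^{-z}\in\mcW_q^{m_+}$ and $h^{-z}\#w_-\#h^z\in\mcW_q^{m_-}$.

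\emph{Step 2: extend $\iota$ to complex orders.} Regard an element of $\mcB$ as a formal series of its homogeneous components and define $\iota$ to multiply the component of degree $d$ by $i^{d}$; on $\mcB_q=\mcB^0$ this recovers the original map, since the degree $-2j$ piece is multiplied by $i^{-2j}=(-1)^{j}$. With this definition $\iota(a\star b)=\iota(b)\star\iota(a)$ for $a\in\mcB_q^{z_1}$ and $b\in\mcB_q^{z_2}$: the contribution of the degree-$d_1$ part of $a$ and the degree-$d_2$ part of $b$ to the degree-$d$ part of $a\star b$ is $B_k$ of those pieces with $d=d_1+d_2-2k$, and, exactly as in the proof of Lemma \ref{lem:i}, the claim reduces to $B_k(x,y)=(-1)^kB_k(y,x)$ together with $i^{-2k}=(-1)^k$.

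\emph{Step 3: $\iota$ is scalar on $h^{\pm z}$.} This is where the \emph{$4$-step} (not merely $2$-step) conclusion of Proposition \ref{entireh} is essential. Every homogeneous component of $\lambda(h^{-z})$ has degree $-2z-4k$ for some $k\ge0$, so $\iota$ multiplies each of them by the \emph{same} scalar $i^{-2z-4k}=i^{-2z}$; hence $\iota(\lambda(h^{-z}))=i^{-2z}\,\lambda(h^{-z})$, and symmetrically $\iota(\lambda(h^{z}))=i^{2z}\,\lambda(h^{z})$. (A merely $2$-step expansion would make $\iota$ act by the alternating scalars $i^{-2z}(-1)^{j}$, which is not a multiple of $h^{-z}$.)

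\emph{Step 4: conclude.} Since $\lambda$ is multiplicative and $\iota$ anti-multiplicative, and using the hypothesis $\iota\circ\lambda(w_+)=\lambda(w_-)$ together with Step 3,
\begin{multline*}
\iota\circ\lambda(h^z\#w_+\#h^{-z})
=\iota\bigl(\lambda(h^{-z})\bigr)\star\iota\bigl(\lambda(w_+)\bigr)\star\iota\bigl(\lambda(h^{z})\bigr)\\
=i^{-2z}\,\lambda(h^{-z})\star\lambda(w_-)\star i^{2z}\,\lambda(h^{z})
=\lambda(h^{-z}\#w_-\#h^z),
\end{multline*}
the scalars $i^{-2z}$ and $i^{2z}$ cancelling. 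Together with Step 1, this says precisely that $(h^z\#w_+\#h^{-z},\,h^{-z}\#w_-\#h^z)\in\Alg$. I expect the only point requiring care to be the bookkeeping in Step 2 --- tracking the scalar $i^{d}$ attached to each homogeneous degree $d$ through the Moyal product --- but that is a verbatim repeat of Lemma \ref{lem:i}; the genuinely essential input is Proposition \ref{entireh}, whose $4$-step conclusion is exactly what forces $\iota$ to act on $h^{\pm z}$ by a scalar, making the two conjugating factors cancel in Step 4.
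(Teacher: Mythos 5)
Your proof is correct and follows essentially the same route as the paper's: the paper introduces a graded family of involutions $\iota_z$ on $\mcB(z)=\rho^{-z}\mcB_q$ satisfying $\iota_{z_1+z_2}(a\star b)=\iota_{z_2}(b)\star\iota_{z_1}(a)$ and uses Proposition \ref{entireh} to get $\iota_{2z}\circ\lambda(h^z)=\lambda(h^z)$, which is exactly your single extended $\iota$ acting by $i^d$ on the degree-$d$ component, up to the overall scalar $i^{\pm 2z}$ that you correctly track and cancel. Both arguments rest on the same two inputs you identify: the $4$-step homogeneity of $h^{\pm z}$ and the symmetry $B_k(x,y)=(-1)^kB_k(y,x)$ from Lemma \ref{lem:i}.
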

\begin{proof}
For $z\in \CC$ denote
$\mcB_q^z:=\mcW_q^z/\mcS$, and as before let $\lambda  \colon  \mcW_q^z\to \mcB_q^z$ be the quotient map. 
Let 
\begin{equation*}\mcB(z):=  \rho^{-z}\mcB_q = \bigcup_{m\in \ZZ} \mcB_q^{z+2m}\end{equation*}
For each $z\in \CC$ we  define $\iota_z$ as
\begin{equation*} \iota_z  \colon  \mcB(z)\to \mcB(z)\qquad \iota_z(\rho^{-z}b) := \rho^{-z}\iota(b)\qquad b\in \mcB_q\end{equation*}
If $z=0$ then $\mcB(0)=\mcB_q$, and $\iota_0$ is equal to the previously defined $\iota \colon \mcB_q\to \mcB_q$.
Note that  $\mcB(z)=\mcB(z+2)$, but that
\begin{equation*}\iota_{z+2}=-\iota_z\end{equation*}
If $a \in \mcB(z_1)$ and $b\in\mcB(z_2)$ then  $a\star b\in \mcB(z_1+z_2)$, and
\begin{equation*}
\iota_{z_1+z_2} (a\star b) = \iota_{z_2}(b) \star \iota_{z_1}(a)
\end{equation*}
The proof is essentially the same as that of Lemma \ref{lem:i}.

Proposition \ref{entireh} implies that
\begin{equation*}\iota_{2z} \circ \lambda(h^{z}) = \lambda(h^{z})  \end{equation*}
Then
\begin{multline*}
\iota \circ \lambda (h^{z} \# w_+ \#  h^{-z}) = \iota \left( \lambda (h^{z}) \star \lambda(w_+) \star  \lambda(h^{-z}) \right)= \iota_{-2z} \circ \lambda (h^{-z}) \star \iota \circ \lambda(w_+) \star  \iota_{2z} \circ \lambda (h^{z}) =\\  \lambda (h^{-z}) \star  \lambda(w_-) \star   \lambda (h^{z})= \lambda (h^{-z} \# w_- \#  h^{z}).
\end{multline*}
\end{proof}

\begin{remark}
The map $(w_+,w_-)\mapsto (h^{z} \# w_+ \#  h^{-z}, h^{-z} \# w_- \#  h^{z})$ is an automorphism of $\Alg$ that is formally like conjugation with $(h^{z},h^{z})$. But unless $z$ is an integer divisible by $4$, $(h^{z},h^{z})$ is  not an element in $\Alg$, nor is it an element in a larger algebra that contains $\Alg$.
\end{remark}

\subsection{A regularized trace for the Weyl algebra}
If $a\in \mcS(\RR^{2n})$, then according to equation (\ref{eqn:Weyl}) the Schwartz kernel of $\Op^w(a)$ is
\begin{equation*} K(x,y) = \frac{1}{(2\pi)^n} \int e^{i(x-y)\cdot \xi} a\left(\frac{x+y}{2},\xi\right)d\xi\end{equation*}
and so $\Op^w(a)$ is  a trace class operator, with trace
\begin{equation*} \Tr(\Op^w(a)) =\frac{1}{(2\pi)^n} \int a(x,\xi) dx \, d\xi \qquad a\in \Sch(\RR^{2n})\end{equation*}
This equation defines a trace on the algebra $\mcS$ (with product $\#$) which we  denote $\Tr(a)$.
If $a\in \mcW^z$  is of complex order $z$ with $\re\,z<-2n$, then $\Op^w(a)$ is trace class with the trace given by the same formula.

For $a\in \mcW_q^{2m}$ of even order $2m$, define the zeta-function
\begin{equation*} \zeta_a(z) := \Tr(\Op^w(a)\ho^{-z})\qquad \qquad  \re z> n+m\end{equation*}
Note that $\Op^w(a)\ho^{-z}$ is of order $-2m-2z$, and hence trace class if $\re z> n+m$.
It follows that
the zeta function is holomorphic for $\re z> n+m$. It
extends to a meromorphic function with at most simple poles at $m+n, m+n-1, m+n-2, \dots$.
The residue at $z=0$  of the  zeta-function gives a residue trace on $\mcW_q$,
\begin{equation*} \Res \colon \mcW_q\to \CC\qquad \Res(a) = \lim_{z\to 0} z\zeta_a(z) \end{equation*}
$\Res$ is a trace on $\mcW_q$ that vanishes on the ideal $\Sch$.
It follows that residue induces a trace on the quotient $\mcB_q=\mcW_q/\Sch$ which we also denote $\Res$.
An explicit formula for  $\Res a$ in terms of the asymptotic expansion $a=\sum_ja_{2j}\rho^{2j}\in \mcB_q$ is 
\begin{equation*}
\Res a= -\frac{1}{2(2\pi)^n}\int_{S^{2n-1}}a_{2n}(\theta) d \theta.
\end{equation*}
From this the following is immediate:
\begin{proposition}\label{antisymmetry}
For $a \in \mcB_q$
\begin{equation*}
\Res (\iota a)=(-1)^n \Res a
\end{equation*}
\end{proposition}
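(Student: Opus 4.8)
The statement to prove is that $\Res(\iota a) = (-1)^n \Res a$ for $a \in \mcB_q$. The plan is to read this off directly from the explicit formula for the residue trace in terms of the asymptotic expansion, which was stated just before the proposition:
\begin{equation*}
\Res a = -\frac{1}{2(2\pi)^n}\int_{S^{2n-1}} a_{2n}(\theta)\, d\theta.
\end{equation*}
Here $a = \sum_j a_{2j}\rho^{2j} \in \mcB_q$, with $a_{2j}$ homogeneous of degree $-2j$, and the residue depends \emph{only} on the single homogeneous component $a_{2n}$ of degree $-2n$, restricted to the unit sphere. The key observation is that the involution $\iota$ acts on the $j$-th homogeneous term $a_{2j}$ simply by multiplication by $(-1)^j$, by definition of $\iota$ on $\mcB_q$. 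Therefore $(\iota a)_{2n} = (-1)^n a_{2n}$, and the formula for $\Res$ immediately gives $\Res(\iota a) = (-1)^n \Res a$.

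In more detail, the steps I would carry out are: first, recall that by definition $\iota\bigl(\sum_{j} a_{2j}\bigr) = \sum_j (-1)^j a_{2j}$; second, extract the degree $-2n$ component, noting that it corresponds to the index $j = n$, so the sign introduced by $\iota$ is exactly $(-1)^n$; third, substitute into the integral formula for $\Res$ and pull the constant $(-1)^n$ outside the integral over $S^{2n-1}$. Since $\Res$ on $\mcB_q$ is induced from the residue of the zeta function $\zeta_a(z) = \Tr(\Op^w(a)\ho^{-z})$ and is well-defined (it vanishes on the ideal $\Sch$), and since $\iota$ is a well-defined map on $\mcB_q$, there are no subtleties about representatives.

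I do not expect any real obstacle here: the proposition is an immediate corollary of the explicit residue formula, which is why the text says "From this the following is immediate." The only thing worth being careful about is the bookkeeping of which homogeneous degree the residue sees — one must confirm that the relevant term $a_{2n}$ of homogeneity $-2n$ sits in the $j = n$ slot of the step-2 expansion $a = \sum_j a_{2j}\rho^{2j}$, so that $\iota$ contributes the sign $(-1)^n$ rather than some other power. Once that indexing is pinned down, the proof is a single line.
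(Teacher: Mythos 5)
Your proof is correct and follows exactly the route the paper intends: the residue depends only on the homogeneous component $a_{2n}$ of degree $-2n$, which sits in the $j=n$ slot of the step-2 expansion, so $\iota$ multiplies it by $(-1)^n$ and the factor pulls out of the integral over $S^{2n-1}$. The paper gives no proof beyond "From this the following is immediate," and your argument is precisely the intended one.
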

We denote by $\Trh (a)$ the constant term at $z=0$ of the zeta-function,
\begin{equation*}  \Trh(a) = \lim_{z\to 0}  \left(\zeta_a(z) - \frac{1}{z}\Res(a)\right)\end{equation*}
If $a\in \mcS$ then $\Tr(\Op^w(a)\ho^{-z})$ is an entire function, and we see that
\begin{equation*} \Trh(a) = \Tr(a)\qquad \forall a\in \Sch(\mathbb{R}^{2n})\end{equation*}
However the functional $\Trh$ is not a trace on $\mcW_q$.
\begin{example}\label{example 2-dim trace}
Let $n=1$, and take $a=(x^2+\xi^2)^{\#k}$ with $\Op^w(a)=\ho^k$, $k=0,1,2,3, \dots$. Then 
\begin{equation*}
\zeta_a(z)= \Tr \ho^{k-z}
\end{equation*}
The spectrum of $\ho$ consists of simple eigenvalues $1,3,5,7, \ldots$. 
Hence 
\begin{equation*}\Tr \ho^{k-z} = \sum_{l=0}^\infty (1+2l)^{k-z} =(1-2^{k-z})\zeta(z-k) \end{equation*}
where $\zeta(z)$ is the Riemann $\zeta$-function.
The equality holds if $\re z > k+1$, and hence we obtain equality 
\begin{equation*}\zeta_a(z) = (1-2^{k-z})\zeta(z-k)\end{equation*}
of the meromorphic extensions of the two functions.
In particular,
\begin{equation*}\Trh((x^2+ \xi^2)^{\#k})=(1-2^k)\zeta(-k)=\begin{cases} 0 &\text{if $k \ge 0$ is even}\\
                                             (2^k-1)\frac{B_{k+1}}{k+1}&\text{if $k >0$ is odd}
                                \end{cases}
\end{equation*} 
where $B_{k+1}$ is the Bernoulli number.
\end{example}

\subsection{The trace $\tau$ on $\Alg$}

\begin{lemma}\label{lem:pole}
If $\sigma=(w_+,w_-)\in \Alg$, then the meromorphic function
\begin{equation*} \varphi(z,\sigma) := \zeta_{w_+}(z)-(-1)^n\zeta_{w_-}(z)\end{equation*}
is holomorphic at $z=0$.
\end{lemma}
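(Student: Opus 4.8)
The plan is to analyze the poles of $\zeta_{w_+}$ and $\zeta_{w_-}$ at $z=0$ individually and show that their residues cancel in the combination $\zeta_{w_+}(z)-(-1)^n\zeta_{w_-}(z)$. Recall that $\zeta_{w_\pm}(z)=\Tr(\Op^w(w_\pm)\ho^{-z})$ extends meromorphically with at worst a simple pole at $z=0$, and that $\Res$ at $z=0$ of $\zeta_{w_\pm}$ equals the residue trace $\Res(w_\pm)$, which depends only on $\lambda(w_\pm)\in\mcB_q$ and in fact only on the homogeneous component of degree $-2n$ via the explicit formula
\[
\Res a= -\frac{1}{2(2\pi)^n}\int_{S^{2n-1}}a_{2n}(\theta)\,d\theta .
\]
So the residue of $\varphi(z,\sigma)$ at $z=0$ equals $\Res(w_+)-(-1)^n\Res(w_-)$.

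First I would invoke the defining condition for membership in $\Alg$: if $\sigma=(w_+,w_-)\in\Alg$ then $\lambda(w_+)=\iota\circ\lambda(w_-)$ in $\mcB_q$. Then by Proposition \ref{antisymmetry} (antisymmetry of the residue trace under the involution $\iota$), we have
\[
\Res(w_+)=\Res(\iota\,\lambda(w_-))=(-1)^n\Res(\lambda(w_-))=(-1)^n\Res(w_-).
\]
Hence $\Res(w_+)-(-1)^n\Res(w_-)=0$, so the residue of $\varphi(z,\sigma)$ at $z=0$ vanishes. Since $\zeta_{w_+}$ and $\zeta_{w_-}$ each have at most a simple pole at $z=0$, a vanishing residue means $\varphi(z,\sigma)$ is actually holomorphic there.

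The only point requiring a little care — and the step I expect to be the main (minor) obstacle — is making sure the residue identification is applied correctly given that $w_\pm$ are elements of $\mcW_q$ of order zero rather than of a generic order, and that the residue trace on $\mcW_q$ descends to $\mcB_q$ and vanishes on the ideal $\Sch$; this is exactly what was established just before the statement, so it is already available. One should also note that the sign bookkeeping in $\iota$ (which flips the homogeneous component of degree $2j$ by $(-1)^j$, hence the degree $-2n$ piece by $(-1)^n$) matches precisely the $(-1)^n$ in Proposition \ref{antisymmetry}, which is what produces the cancellation. Apart from these routine verifications the argument is immediate.
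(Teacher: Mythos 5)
Your argument is exactly the paper's proof: both reduce the claim to showing that the residue $\Res(w_+)-(-1)^n\Res(w_-)$ vanishes, and both obtain this from the defining relation $\lambda(w_+)=\iota\circ\lambda(w_-)$ together with Proposition \ref{antisymmetry}. The proposal is correct and complete.
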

\begin{proof}
The zeta functions have at most simple poles at $z=0$.
The residue of $\varphi(z,\sigma)$ at $z=0$ is
\begin{equation*} \Res(w_+)-(-1)^n\Res(w_-)=\Res(\lambda(w_+))-(-1)^n\Res(\lambda(w_-))\end{equation*}
Since $\iota \circ\lambda(w_+)=\lambda(w_-)$, this residue is zero by Proposition \ref{antisymmetry}.

\end{proof}

\begin{definition}
Define a linear map $\tau  \colon \Alg\to \CC $ by 
\begin{equation*}  \tau(\sigma) := \varphi(0,\sigma)= \Trh(w_+)- (-1)^n \Trh(w_-)\end{equation*}
for $\sigma=(w_+,w_-)\in \mcA$.
\end{definition}

\begin{theorem}\label{tau is a trace}
$\tau$ is a trace, i.e. $\tau(\sigma\sigma')=\tau(\sigma'\sigma)$ for all $\sigma,\sigma'\in\Alg$.
\end{theorem}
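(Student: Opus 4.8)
The plan is to prove the trace property $\tau(\sigma\sigma') = \tau(\sigma'\sigma)$ by reducing it to the known trace-like behavior of the regularized trace $\Trh$ on $\mcW_q$ together with the residue correction measured by $\Res$. Recall that $\Trh$ fails to be a trace, but the defect is governed by the residue: for $a,b\in\mcW_q$ one has a formula of the shape $\Trh(a\#b)-\Trh(b\#a) = \Res(\delta(a,b))$ for a suitable bilinear expression $\delta$ coming from differentiating $\ho^{-z}$ (equivalently, $\Trh([a,b]) = -\Res(a\#[\log\ho\text{-derivative}]\#\cdots)$; the cleanest route is the standard identity $\Tr([A,B]\ho^{-z}) = z\,\Tr(B\,[\ho^{-z},\ho^{-1}\cdot\text{something}]\cdots)$, whose $z\to 0$ limit picks out a residue). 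The point is that the commutator defect of $\Trh$ is always a residue functional in the $+$-component and, symmetrically, in the $-$-component.

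First I would write $\sigma=(w_+,w_-)$, $\sigma'=(w_+',w_-')$, so that $\sigma\sigma' = (w_+\# w_+',\, w_-'\# w_-)$ and $\sigma'\sigma = (w_+'\# w_+,\, w_-\# w_-')$. Then
\[
\tau(\sigma\sigma')-\tau(\sigma'\sigma) = \big(\Trh(w_+\# w_+')-\Trh(w_+'\# w_+)\big) - (-1)^n\big(\Trh(w_-'\# w_-)-\Trh(w_-\# w_-')\big).
\]
Each parenthesized difference is, by the commutator-defect formula for $\Trh$, a residue: the first equals $\Res(c(w_+,w_+'))$ and the second equals $\Res(c(w_-',w_-)) = -\Res(c(w_-,w_-'))$ for the same universal bilinear symbol $c$ built from the Moyal product and the symbol of $\ho^{-1}$ (more precisely from $\partial_z h^{-z}|_{z=0}$). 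So the difference collapses to $\Res(c(w_+,w_+')) + (-1)^n\Res(c(w_-,w_-'))$. Now I invoke the compatibility constraint defining $\Alg$: $\iota\circ\lambda(w_+)=\lambda(w_-)$ and $\iota\circ\lambda(w_+')=\lambda(w_-')$. Since $\iota$ is an anti-involution (Lemma \ref{lem:i}) and $c$ is assembled functorially from $\star$ and a fixed central-ish element, $\iota$ transforms $c(w_+,w_+')$ into (a reordering of) $c(w_-,w_-')$ up to sign, and Proposition \ref{antisymmetry} supplies exactly the factor $(-1)^n$ relating $\Res\circ\iota$ to $\Res$. Matching signs shows the two residue terms cancel, giving $\tau(\sigma\sigma')=\tau(\sigma'\sigma)$.

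The main obstacle is pinning down the precise commutator-defect formula for $\Trh$ on $\mcW_q$ and verifying that its symbol $c$ has the homogeneity and $\iota$-equivariance needed for the final cancellation — in particular that only the $\rho^{2n}$ homogeneous component of $c(w_+,w_+')$ survives under $\Res$, and that $\iota$ acts on that component by exactly $(-1)^n$. This is where Proposition \ref{entireh} enters: the $4$-step homogeneity of $h^{-z}$ forces the relevant derivative $\partial_z h^{-z}|_{z=0}$ to have an expansion in $\rho^{4k}$, which controls how $\iota$ (which acts by $(-1)^j$ on the degree $-2j$ piece) interacts with it, and keeps $c(w_\pm,w_\pm')\in\mcB_q$ so that Proposition \ref{antisymmetry} applies. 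Once that bookkeeping is done the cancellation is forced; alternatively, one can shortcut the explicit defect formula by the slicker argument that $z\mapsto \varphi(z,\sigma\sigma')-\varphi(z,\sigma'\sigma)$ is holomorphic near $0$ (Lemma \ref{lem:pole} applied to the commutator) and its value at $0$ is the limit of $z$ times something bounded, hence computable purely from residues, which again vanishes by Proposition \ref{antisymmetry} — I would present whichever of these two packagings of the same computation is cleanest.
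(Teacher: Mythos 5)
Your main route --- the component-wise trace-defect formula for $\Trh$ combined with the $\iota$-equivariance and Proposition \ref{antisymmetry} --- is a genuine alternative to the paper's proof, and the cancellation mechanism you identify is the right one. The paper never writes down the defect formula: it forms the one-parameter group of automorphisms $\delta_z(\sigma)=(h^{-z}\# w_+\# h^{z},\,h^{z}\# w_-\# h^{-z})$ of $\Alg$ (this is where Propositions \ref{entireh} and \ref{automorphism} enter), uses cyclicity of the honest trace for $\re z\gg0$ to get $\varphi(z,\sigma\sigma')=\varphi(z,\sigma'\delta_z(\sigma))$, Taylor-expands $\delta_z(\sigma)=\sum_k\frac{z^k}{k!}D^k(\sigma)$ with $D^k(\sigma)\in\Alg^{m-2k}$, and then kills every $k\ge1$ term at $z=0$ because each $\varphi(z,\sigma'D^k(\sigma))$ is holomorphic there (Lemma \ref{lem:pole}, i.e.\ Proposition \ref{antisymmetry} applied to the combined $\pm$ object). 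So the two arguments rest on the same two pillars; the paper's bundling of the $\pm$ components discards all correction terms at once, whereas your component-wise version isolates the $k=1$ term as a residue and must then cancel the $+$ and $-$ residues by hand. What your route costs is that you must actually prove the defect formula $\Trh(a\#b)-\Trh(b\#a)=\Res\bigl(b\# D(a)\bigr)$ for this calculus (standard, but not in the paper), and you must check that $D(a)=\frac{d}{dz}\big|_{z=0}\,h^{-z}\#a\#h^{z}$ lies in $\mcB_q$ with $\iota(D(a))=D(\iota(a))$: the $\log\rho$ term in $\partial_zh^{-z}|_{z=0}$ disappears only inside the commutator, and the surviving terms are controlled by the $4$-step homogeneity of Proposition \ref{entireh}, exactly as you anticipate. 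This bookkeeping does work out, so your plan is viable, at the price of more explicit computation than the paper needs.

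One caution about your ``slicker'' alternative: holomorphy of $z\mapsto\varphi(z,\sigma\sigma')-\varphi(z,\sigma'\sigma)$ at $z=0$ (Lemma \ref{lem:pole} applied to $[\sigma,\sigma']$) says nothing by itself about the \emph{value} at $0$ --- that value is precisely $\tau([\sigma,\sigma'])$, the quantity to be shown zero. The explicit factor of $z$ that makes the value ``computable purely from residues'' comes only from the conjugation identity $\varphi(z,\sigma\sigma'-\sigma'\sigma)=\sum_{k\ge1}\frac{z^k}{k!}\varphi(z,\sigma'D^k(\sigma))$; without that step the shortcut is circular, and with it you have essentially reproduced the paper's proof.
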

\begin{proof}
For  $z \in \mathbb{C}$ and $\sigma = (w_+, w_-) \in \Alg$ define
\begin{equation*}\delta_z(\sigma):=(h^{-z}\# w_+\# h^{z},  h^{z} \# w_-\# h^{-z})\end{equation*} 
By Proposition \ref{automorphism} we have $\delta_z(\sigma)\in \Alg$. 
Since $h^{z_1}\# h^{z_2} =h^{z_1+z_2}$,  $\delta_z$ defines a one-parameter group $\delta \colon \CC\to \mathrm{Aut}(\Alg)$ of automorphisms of $\Alg$.
The function $z \mapsto \delta_z(\sigma)$ is entire for any $\sigma\in \Alg$. 
We  obtain a corresponding derivation $D  \colon  \Alg \to \Alg$ defined by
\begin{equation*}
D(\sigma) := \left. \frac{d}{dz}\right|_{z=0} \delta_z(\sigma)
\end{equation*}
The Taylor series of the function $\CC\to \Alg\; \colon \;z\mapsto \delta_z(\sigma)$ is
\begin{equation}\label{taylor}
\delta_z(\sigma) = \sum_{k=0}^\infty \frac{z^k}{k!}D^k(\sigma)
\end{equation}
Note that for $\sigma \in \Alg^m$ we have $D(\sigma) \in \Alg^{m-2}$, and hence $D^k(\sigma)\in \Alg^{m-2k}$.

Now for $\sigma=(w_+, w_-), \sigma'=(w_+^\prime, w_-^\prime) \in \Alg$  and $\re z \gg 0$,
\begin{multline*}
\varphi(z,\sigma \sigma') = \\ \Tr \Op^w(w_+)\Op^w(w_+^\prime)\ho^{-z}-(-1)^n\Tr \Op^w(w_-^\prime)\Op^w(w_-)\ho^{-z}=\\
\Tr \Op^w(w_+^\prime) (\ho^{-z}\Op^w(w_+)\ho^{z})\ho^{-z}- (-1)^n \Tr  (\ho^{z} \Op^w(w_-)\ho^{-z})\Op^w(w_-^\prime)\ho^{-z}=\\
\varphi(z, \sigma' \delta_z(\sigma))    
\end{multline*}

From the expansion \eqref{taylor}, for $\re z \gg 0$
\begin{equation*}
 \varphi(z,\sigma \sigma')  =
\sum_{k=0}^\infty \frac{z^k}{k!}\varphi(z, \sigma'D^k(\sigma)). 
\end{equation*}
Since both sides admit meromorphic extensions to $\CC$, this equality holds when either (hence both) sides are holomorphic, in particular at $z=0$.   Setting $z=0$ yields
\begin{equation*}
 \varphi(0,\sigma \sigma')= \varphi(0,\sigma' \sigma)
 \end{equation*}
This completes the proof.
 \end{proof}

\subsection{Calculations of $\tau$}
In the section we give an alternative characterization of the trace $\tau$, which is useful in calculations.

For $w \in \mcW$ with asymptotic expansion $w\sim\sum_{m=-l}^\infty  w_{m}$,  with $w_{m}$ homogeneous of degree $-m$, integrating  the terms with $-l\le m\le 2n$ gives  
\begin{equation*}
\int\limits_{\|x\|^2+\|\xi\|^2\le R^2} w(x, \xi) dx \, d\xi=c_{2n+l}R^{2n+l}+\cdots +c_1R+c_0\log{R}+f(R),  \qquad R>0
\end{equation*}
The constants $c_{2n-m}$ depends on $w_m$, and the remainder term $f(R)$ is a smooth function of $R$ that has a (finite) limit when $R \to \infty$.
The logarithmic term arises from integrating $w_{2n}$. Note that the homogeneous function $w_{2n}$ is not integrable on the ball $\|x\|^2+\|\xi\|^2\le R^2$,
unlike the other terms $w_m$ with $-l\le m\le 2n-1$.

We define the linear functional  
\begin{equation*}\widetilde{\Tr} \colon \mcW\to \CC \text{ by } \widetilde{\Tr}(w) :=(2 \pi)^{-n} \lim_{R\to \infty} f(R).\end{equation*}
If  $w\in \mcW^{-2n-1}$ and hence is of trace class then 
\begin{equation*}\widetilde{\Tr}(w)=\Tr(w)
\end{equation*}
Now if $\sigma=(w_+,w_-)\in \Alg$, then $w_+,w_-\in \mcW_q$, and if $w_+ \sim \sum_{m=-l}^\infty w_{2m}$ then $w_- \sim \sum_{m=-l}^\infty (-1)^m w_{2m}$, where $w_{2m}$ is homogeneous of degree $-2m$. 
For $(x, \xi) \ne (0, 0)$, let 
\begin{equation}\label{wpm}
\widetilde{w}_+:= w_+-\sum \limits_{m=-l}^n w_{2m} \qquad 
\widetilde{w}_-:= w_--\sum \limits_{m=-l}^n (-1)^m w_{2m} 
\end{equation}
and 
\begin{equation*}
\mathcal{R}(\sigma):=\widetilde{w}_+-(-1)^n \widetilde{w}_- = w_+-(-1)^n w_- -  2 \sum  w_{2m}
\end{equation*}
where the summation is over $m\in\ZZ $ such that $-l \le m \le n-1$ and $n+m$ is odd.

\begin{proposition}\label{tautilde2}
For $\sigma=(w_+, w_-)\in \Alg$,
\begin{equation*} \widetilde{\Tr}(w_+)-(-1)^n\widetilde{\Tr}(w_-)=(2 \pi)^{-n}\int \mathcal{R}(\sigma) dx\,d\xi\end{equation*}
\end{proposition}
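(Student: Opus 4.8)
The plan is to reduce the whole statement to a single symbol and then track the finite part of its ball‑integrals explicitly. First I would use that $\widetilde{\Tr}\colon\mcW\to\CC$ is linear (the decomposition of $R\mapsto\int_{B_R}w$ into its polynomial, logarithmic and convergent parts is unique), so that
\[
\widetilde{\Tr}(w_+)-(-1)^n\widetilde{\Tr}(w_-)=\widetilde{\Tr}(u),\qquad u:=w_+-(-1)^n w_-\in\mcW .
\]
Next I would record the polyhomogeneous expansion of $u$. Since $\sigma=(w_+,w_-)\in\Alg$ we have $\lambda(w_+)=\iota\circ\lambda(w_-)$, so $w_+\sim\sum_{m\ge -l}w_{2m}$ forces $w_-\sim\sum_{m\ge -l}(-1)^m w_{2m}$, and therefore
\[
u\sim\sum_{m\ge -l}\bigl(1-(-1)^{n+m}\bigr)w_{2m}=2\!\!\sum_{\substack{m\ge -l\\ n+m\ \mathrm{odd}}}\!\! w_{2m}.
\]
The essential point, and the only place the compatibility condition enters, is that the $m=n$ term drops out ($n+n$ is even): $u$ has \emph{no} homogeneous component of degree $-2n$. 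Hence $\mathcal{R}(\sigma)=u-2\sum_{m\in S}w_{2m}$ with $S=\{m:-l\le m\le n-1,\ n+m\ \mathrm{odd}\}$ is genuinely integrable on $\RR^{2n}$ — at infinity because its remaining expansion starts at order $\le-2(n+1)$, and at the origin because each subtracted $w_{2m}$ with $m\in S$ is homogeneous of degree $-2m\ge -2n+2>-2n$ (hence locally integrable) while $w_+,w_-$ are smooth.

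Then comes the main computation. Fix the cut‑off $\varphi$ with $\varphi\equiv1$ on $\{\|x\|^2+\|\xi\|^2\ge2\}$ and $\varphi\equiv0$ on $\{\|x\|^2+\|\xi\|^2\le1\}$, and write $B_R=\{\|x\|^2+\|\xi\|^2\le R^2\}$. I would split
\[
u=\Bigl(w_+-(-1)^n w_--2\!\!\sum_{m\in S}\varphi\,w_{2m}\Bigr)+2\!\!\sum_{m\in S}\varphi\,w_{2m}=:v+2\!\!\sum_{m\in S}\varphi\,w_{2m},
\]
observing that $v$ equals the smooth function $w_+-(-1)^n w_-$ near the origin and equals $\mathcal{R}(\sigma)$ outside a bounded set, so $v\in L^1(\RR^{2n})$ and $\int_{B_R}v\to\int_{\RR^{2n}}v$. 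For each $m\in S$, polar coordinates, $\varphi\equiv1$ for $\|(x,\xi)\|\ge\sqrt2$, and $2n-2m\ne0$ give
\[
\int_{B_R}\varphi\,w_{2m}\,dx\,d\xi=\int_{B_{\sqrt2}}\varphi\,w_{2m}\,dx\,d\xi+\frac{R^{\,2n-2m}-2^{\,n-m}}{2n-2m}\int_{S^{2n-1}}w_{2m}(\theta)\,d\theta,
\]
a constant plus a strictly positive power of $R$ (no $\log R$ can appear, consistent with $u$ having no degree $-2n$ term). Thus $\int_{B_R}u$ is a sum of strictly positive powers of $R$ plus a term converging to a finite limit, and by definition $(2\pi)^n\widetilde{\Tr}(u)$ is that limit:
\[
(2\pi)^n\widetilde{\Tr}(u)=\int_{\RR^{2n}}v+2\!\!\sum_{m\in S}\Bigl(\int_{B_{\sqrt2}}\varphi\,w_{2m}-\frac{2^{\,n-m}}{2n-2m}\int_{S^{2n-1}}w_{2m}(\theta)\,d\theta\Bigr).
\]
Writing $\int v=\int\mathcal{R}(\sigma)+2\sum_{m\in S}\int_{B_{\sqrt2}}(1-\varphi)w_{2m}$ and using the elementary identity $\int_{B_{\sqrt2}}w_{2m}=\frac{2^{n-m}}{2n-2m}\int_{S^{2n-1}}w_{2m}(\theta)\,d\theta$, the bracketed cut‑off terms cancel in pairs against $\int_{B_{\sqrt2}}(1-\varphi)w_{2m}$, leaving $(2\pi)^n\widetilde{\Tr}(u)=\int_{\RR^{2n}}\mathcal{R}(\sigma)\,dx\,d\xi$, which is the assertion.

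The conceptual content is light — the compatibility condition kills the degree $-2n$ component of $u$, which is exactly what makes $\mathcal{R}(\sigma)$ honestly integrable and removes any anomalous $\log R$. Accordingly, I expect the only real obstacle to be the bookkeeping in the last step: one must check that every $\varphi$‑dependent and every $R$‑dependent contribution to $\int_{B_R}u$ either cancels against the radial integrals $\int_{B_{\sqrt2}}w_{2m}$ or lands in the (discarded) non‑constant part, term by term in $m\in S$.
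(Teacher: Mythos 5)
Your proof is correct and rests on exactly the same observation as the paper's (very terse) proof: the compatibility condition kills the degree $-2n$ component of $w_+-(-1)^n w_-$, so no $\log R$ term arises and $\mathcal{R}(\sigma)$ is integrable. The paper simply asserts the resulting identity, whereas you supply the cut-off bookkeeping showing that the constants from the subtracted homogeneous terms cancel; this is a faithful, more detailed version of the same argument.
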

\begin{proof}
The terms in $w_+$ and $w_-$ that are homogeneous of degree $-2n$ (i.e. with $m=n$) cancel out in $w_+-(-1)^nw_-$.
Thus, all terms $w_{2m}$ that appear in the formula for $\mathcal{R}(\sigma)$  are integrable,
and therefore  $\mathcal{R}(\sigma)$ is integrable as well (even though it is singular at $(x, \xi) =(0, 0)$).
\begin{equation*} \widetilde{\Tr}(w_+-(-1)^nw_-) = (2 \pi)^{-n}\int \mathcal{R}(\sigma) dx\,d\xi\end{equation*}
\end{proof}
The goal of this section is to derive the following formula for $\tau$.
\begin{proposition}\label{tautilde}
For $\sigma=(w_+, w_-)\in \Alg$,
\begin{equation*}
\tau(\sigma)= (2 \pi)^{-n}\int \mathcal{R}(\sigma) dx\,d\xi
\end{equation*}
\end{proposition}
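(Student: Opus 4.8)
The plan is to compute $\tau(\sigma)=\varphi(0,\sigma)$ directly, by producing an explicit integral representation of the meromorphic function $\varphi(z,\sigma)$ and reading off its value at $z=0$. First I would use that $\ho^{-z}=\Op^w(h^{-z})$, that $\Op^w(a)\Op^w(b)=\Op^w(a\#b)$, and the standard identity $\Tr\bigl(\Op^w(a)\Op^w(b)\bigr)=(2\pi)^{-n}\int ab\,dx\,d\xi$ for trace-class compositions; this gives, for $\re z\gg 0$, $\zeta_{w_\pm}(z)=\Tr\bigl(\Op^w(w_\pm)\ho^{-z}\bigr)=(2\pi)^{-n}\int_{\RR^{2n}}w_\pm\, h^{-z}\,dx\,d\xi$, so that
\[
\varphi(z,\sigma)=(2\pi)^{-n}\int_{\RR^{2n}}g\, h^{-z}\,dx\,d\xi,\qquad g:=w_+-(-1)^n w_-,\quad \re z\gg 0.
\]
By the definition of $\mathcal{R}(\sigma)$ — which already encodes the relation $(w_-)_{2m}=(-1)^m(w_+)_{2m}$ forced by $\sigma\in\Alg$ — one has the exact pointwise identity $g=\mathcal{R}(\sigma)+2\sum_{m\in S}w_{2m}$, where $S=\{\,m\in\ZZ:-l\le m\le n-1,\ n+m\ \text{odd}\,\}$ is finite and each $w_{2m}$ is homogeneous of degree $-2m$. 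So, by meromorphic continuation of each summand, it suffices to establish: (i) $z\mapsto\int_{\RR^{2n}}\mathcal{R}(\sigma)\,h^{-z}\,dx\,d\xi$ is holomorphic near $z=0$ with value $\int_{\RR^{2n}}\mathcal{R}(\sigma)\,dx\,d\xi$ there; and (ii) for each $m\in S$, $z\mapsto\int_{\RR^{2n}}w_{2m}\,h^{-z}\,dx\,d\xi$ is holomorphic at $z=0$ and vanishes there.

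Claim (i) is routine: the expansion of $\mathcal{R}(\sigma)$ at infinity starts with homogeneous terms of degree $\le -2n-2$ and near the origin $\mathcal{R}(\sigma)=O(\|v\|^{-2(n-1)})$, so $\mathcal{R}(\sigma)\in L^1(\RR^{2n})$; since $h^{-z}$ is, for $z$ in a fixed neighbourhood of $0$, bounded by a fixed power of $1+\|v\|^2$ that remains integrable against $|\mathcal{R}(\sigma)|$, and $h^{-z}\to h^0\equiv 1$ pointwise, dominated convergence finishes it. Claim (ii) is the heart of the matter. Fixing $m\in S$ and setting $p:=w_{2m}$ (homogeneous of degree $-2m$ with $m\le n-1$, hence $p\in L^1_{\mathrm{loc}}$ near $0$, and $n-m$ odd), I would split the integral at $\|v\|^2=2$. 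On $\{\|v\|^2\le 2\}$ the integral $\int_{\|v\|^2\le 2}p\,h^{-z}\,dx\,d\xi$ is holomorphic in $z$, with value $\int_{\|v\|^2\le 2}p\,dx\,d\xi=\frac{2^{\,n-m-1}}{n-m}\int_{S^{2n-1}}p\,d\theta$ at $z=0$ by homogeneity. On $\{\|v\|^2\ge 2\}$, where the cut-off $\varphi\equiv 1$, I would insert the expansion $h^{-z}\sim\sum_{k\ge 0}b_k(z)\rho^{2z+4k}$ from Proposition \ref{entireh} (with $\rho=(\|x\|^2+\|\xi\|^2)^{-1/2}$, so $\rho^{2z+4k}=\|v\|^{-2z-4k}$); the radial integrals
\[
\int_{\sqrt 2}^{\infty} r^{\,2n-1-2m-2z-4k}\,dr=-\frac{(\sqrt 2)^{\,2n-2m-2z-4k}}{2n-2m-2z-4k}
\]
have poles only at $z=n-m-2k$, none of which equals $0$ precisely because $n-m$ is odd. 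Since $b_0(0)=1$ and $b_k(0)=0$ for $k\ge 1$ (forced by $h^0\equiv 1$), and the remainder of the expansion of $h^{-z}$ also vanishes at $z=0$ on this region, the value at $z=0$ of $\int_{\|v\|^2\ge 2}p\,h^{-z}\,dx\,d\xi$ is $-\frac{2^{\,n-m-1}}{n-m}\int_{S^{2n-1}}p\,d\theta$; the two contributions cancel, proving (ii). (The case $m\le 0$, where $p$ is a polynomial, is identical.) Adding (i) and (ii) and recalling $\tau(\sigma)=\varphi(0,\sigma)$ yields $\tau(\sigma)=(2\pi)^{-n}\int\mathcal{R}(\sigma)\,dx\,d\xi$.

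The main obstacle is the technical bookkeeping inside (ii): treating the transition annulus $1\le\|v\|^2\le 2$ where $0\le\varphi\le 1$, and controlling the remainder of the asymptotic expansion of $h^{-z}$ uniformly for $z$ near $0$ so that the term-by-term evaluation of the meromorphic continuation is legitimate; both are dispatched by the symbol estimates for complex orders together with Proposition \ref{entireh}. The conceptual point — which is exactly why the final formula carries no residue corrections — is that the failure of $\int g\,h^{-z}\,dx\,d\xi$ to converge at $z=0$ is carried entirely by the homogeneous components $w_{2m}$, $m\in S$, and that homogeneity combined with the $4$-step structure of $h^{-z}$ forces each of their regularized values to be zero.
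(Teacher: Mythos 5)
Your argument is correct, but it reaches Proposition \ref{tautilde} by a genuinely different route from the paper. You evaluate the zeta function $\varphi(z,\sigma)=(2\pi)^{-n}\int (w_+-(-1)^nw_-)\,h^{-z}\,dx\,d\xi$ directly at $z=0$, using the $4$-step homogeneous expansion of $h^{-z}$ from Proposition \ref{entireh} and an explicit inner/outer radial cancellation to show each homogeneous term $w_{2m}$ with $n+m$ odd contributes nothing. The paper instead passes through the inverse Mellin transform (Proposition \ref{heatkernel}), identifying $\tau(\sigma)$ with the constant term in the small-$t$ expansion of $\Tr(\Op^w(w_+)e^{-t\ho})-(-1)^n\Tr(\Op^w(w_-)e^{-t\ho})$; after the same decomposition $w_+-(-1)^nw_-=\mathcal{R}(\sigma)+2\sum_{m}w_{2m}$ and the same trace identity (Lemma \ref{trab}), the homogeneous contributions are computed in closed form from Mehler's formula as $\frac{c_m\Gamma(n-m)}{2(\sinh t)^{n-m}(\cosh t)^m}$, which is odd in $t$ precisely because $n-m$ is odd, so its Laurent expansion has no constant term. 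Both proofs hinge on the same parity; yours locates the mechanism in the absence of a pole of $\int_{\sqrt 2}^{\infty}r^{2n-1-2m-2z-4k}\,dr$ at $z=0$ together with $b_k(0)=0$ for $k\ge 1$, the paper's in the oddness of an elementary function of $t$, which spares it from invoking Proposition \ref{entireh} at this stage. Two points you should make explicit to close the argument: Lemma \ref{trab} is stated only for $b\in\mcS$, whereas you apply $\Tr\bigl(\Op^w(a)\Op^w(b)\bigr)=(2\pi)^{-n}\int ab$ with $b=h^{-z}$, which is not Schwartz --- for $\re z\gg 0$ this follows from the same oscillatory-integral computation or by approximating $a$ by compactly supported symbols, but it is an extension of the stated lemma; and the term-by-term evaluation of the meromorphic continuation on the outer region requires the remainder estimates \eqref{seminorms2} to hold locally uniformly (indeed holomorphically) in $z$, which you correctly flag and which is standard for the complex powers $h^{-z}$.
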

Proposition \ref{tautilde} has some important corollaries.
\begin{corollary}\label{tau_of_polynomial}
Let $\sigma=(w_+, w_-)\in \Alg$ be such that $w_+(x,\xi), w_-(x,\xi)$ are polynomials in $(x,\xi)\in \RR^{2n}$. Then $\tau(\sigma)=0$.
\end{corollary}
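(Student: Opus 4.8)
The plan is to deduce the statement directly from Proposition \ref{tautilde}, which identifies $\tau(\sigma)$ with $(2\pi)^{-n}\int_{\RR^{2n}}\mathcal{R}(\sigma)\,dx\,d\xi$; it therefore suffices to show that $\mathcal{R}(\sigma)=0$ when $w_+$ and $w_-$ are polynomials. Recall from \eqref{wpm} that $\mathcal{R}(\sigma)=\widetilde{w}_+-(-1)^n\widetilde{w}_-$, where $\widetilde{w}_+$ is obtained from $w_+$ by deleting all homogeneous terms $w_{2m}$ of its asymptotic expansion with $-l\le m\le n$, and similarly for $\widetilde{w}_-$ (whose homogeneous components are $(-1)^m w_{2m}$ since $\lambda(w_-)=\iota\circ\lambda(w_+)$). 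So the whole argument comes down to the observation that, for a polynomial, this truncation removes everything.

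Concretely, I would argue as follows. Since $(w_+,w_-)\in\Alg$ we have $\lambda(w_+)\in\mcB_q$, so the asymptotic expansion $w_+\sim\sum_{m\ge-l}w_{2m}$ of the polynomial $w_+$ is a sum of homogeneous terms of even degree $-2m$. A nonzero polynomial has no homogeneous component of negative degree, hence $w_{2m}=0$ for all $m\ge 1$, and the expansion is the finite exact sum $w_+=\sum_{m=-l}^{0}w_{2m}$ with $l\ge 0$. Since $-l\le 0\le n$, this gives $\sum_{m=-l}^{n}w_{2m}=\sum_{m=-l}^{0}w_{2m}=w_+$, so $\widetilde{w}_+=w_+-\sum_{m=-l}^{n}w_{2m}=0$. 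The identical computation applied to the polynomial $w_-$ yields $\widetilde{w}_-=0$. Therefore $\mathcal{R}(\sigma)=0$, and Proposition \ref{tautilde} gives $\tau(\sigma)=0$.

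I do not anticipate a genuine obstacle here. The only point that wants attention is the index bookkeeping: one must confirm that the homogeneous components subtracted off in forming $\widetilde{w}_\pm$ in \eqref{wpm} — those of degrees $2l,2l-2,\dots,2,0$ — really do exhaust $w_\pm$, i.e.\ that no homogeneous piece of $w_\pm$ has odd or negative degree. This is precisely the assertion that a polynomial lying in $\mcW_q$ is a finite sum of homogeneous pieces of nonnegative even degree, which is forced by $\lambda(w_+)\in\mcB_q$. An equivalent phrasing, using Proposition \ref{tautilde2} instead, is that for any polynomial $w\in\mcW$ every homogeneous component has degree $\ge -2n$, so the bounded remainder $f(R)$ in the formula for $\int_{\|x\|^2+\|\xi\|^2\le R^2}w\,dx\,d\xi$ vanishes; hence $\widetilde{\Tr}(w)=0$, and applying this to $w_+$ and $w_-$ gives $\tau(\sigma)=\widetilde{\Tr}(w_+)-(-1)^n\widetilde{\Tr}(w_-)=0$. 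Either route is routine.
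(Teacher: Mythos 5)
Your proposal is correct and is essentially the paper's argument: the paper's proof is the one-line observation that $\widetilde{\Tr}(w)=0$ for a polynomial $w$, combined with Propositions \ref{tautilde} and \ref{tautilde2} — exactly the ``equivalent phrasing'' you give in your final paragraph, and your primary route (showing $\mathcal{R}(\sigma)=0$ outright) is the same computation seen before integrating. The index bookkeeping you flag is handled correctly.
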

\begin{proof}If $w(x,\xi)$ is a polynomial then $\widetilde{\Tr}(w)=0$. \end{proof}
\begin{corollary}\label{tau_invariance}
Let $\phi \in \mathrm{Sp}(\RR^{2n})$, $\sigma\in \Alg$. Then 
\begin{equation*}\tau( \phi(\sigma))= \tau(\sigma)\end{equation*}
\end{corollary}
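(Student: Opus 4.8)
The plan is to deduce the corollary from the integral formula for $\tau$ in Proposition \ref{tautilde}, rather than from the defining expression $\tau(\sigma) = \Trh(w_+) - (-1)^n\Trh(w_-)$. Working directly with the zeta functions would be awkward: the harmonic oscillator $\ho$ is invariant only under $U(n) \subset \mathrm{Sp}(2n)$, so there is no evident way to compare $\zeta_{w_+}(z)$ with $\zeta_{\phi(w_+)}(z)$ for a general symplectic $\phi$. The formula of Proposition \ref{tautilde}, by contrast, involves only the symplectically natural volume form and the polyhomogeneous expansion, both of which behave transparently under $\phi$.

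First I would recall from section \ref{sec:Weylalg} that $\phi \in \mathrm{Sp}(\RR^{2n})$ acts on $\mcW$ by the automorphism $a \mapsto \phi(a) = a \circ \phi^{-1}$, that this action preserves $\mcW_q$ and commutes with $\lambda$ and $\iota$, hence preserves $\Alg$ acting componentwise, so that $\phi(\sigma) = (\phi(w_+), \phi(w_-)) \in \Alg$. Now write the expansion $w_+ \sim \sum_{m \ge -l} w_{2m}$ with $w_{2m}$ homogeneous of degree $-2m$, so that $w_- \sim \sum_{m \ge -l} (-1)^m w_{2m}$. Since $\phi^{-1}$ is a \emph{linear} isomorphism, $\phi$ acts on asymptotic expansions term by term and carries a function homogeneous of degree $-2m$ to a function homogeneous of degree $-2m$; hence the degree $-2m$ component of $\phi(w_+)$ (resp.\ $\phi(w_-)$) is exactly $w_{2m} \circ \phi^{-1}$ (resp.\ $(-1)^m w_{2m} \circ \phi^{-1}$). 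From the definition of $\mathcal{R}$ this gives
\[ \mathcal{R}(\phi(\sigma)) = \mathcal{R}(\sigma) \circ \phi^{-1}. \]

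Finally, a symplectic transformation preserves the volume form $|\omega|^n/n! = dx\,d\xi$, i.e.\ $|\det \phi| = 1$, so the linear change of variables $(x,\xi) = \phi(u)$ gives $\int \mathcal{R}(\sigma) \circ \phi^{-1}\, dx\,d\xi = \int \mathcal{R}(\sigma)\, dx\,d\xi$; both integrals converge because $\mathcal{R}(\sigma)$ is integrable (as observed in the proof of Proposition \ref{tautilde2}) and integrability is preserved under composition with a linear isomorphism. Combining this with Proposition \ref{tautilde},
\[ \tau(\phi(\sigma)) = (2\pi)^{-n}\int \mathcal{R}(\phi(\sigma))\, dx\,d\xi = (2\pi)^{-n}\int \mathcal{R}(\sigma)\, dx\,d\xi = \tau(\sigma). \]
There is no serious obstacle here; the one point requiring care is that it is the \emph{linearity} of $\phi$ that makes it respect the grading by homogeneity, which is precisely why passing to the integral formula of Proposition \ref{tautilde} before invoking symplectic invariance is the natural route.
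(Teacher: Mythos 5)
Your proposal is correct and follows the same route as the paper: the corollary is deduced from the integral formula of Proposition \ref{tautilde}, using that $\phi$ acts term by term on the homogeneous expansion (so $\mathcal{R}(\phi(\sigma))=\mathcal{R}(\sigma)\circ\phi^{-1}$) and preserves the measure $dx\,d\xi$. The paper compresses all of this into one line, so your version simply makes the same argument explicit.
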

\begin{proof}
A symplectic transformation $\phi$ preserves the measure $dx\,d\xi$.
\end{proof}

The remainder of this section is devoted to the proof of Proposition \ref{tautilde}.
The proof relies on the well-known connection between  zeta functions and  heat kernel expansions.

Given $\sigma=(w_+, w_-) \in \Alg$,
let $a:=w_+-(-1)^n w_-\in \mcW_q^{2l}$, so that
\begin{equation*} \tau(\sigma)=\Trh(a)\end{equation*}
The zeta function $\zeta_a(z)$ and the heat kernel are related by the Mellin transform.
With $A= \Op^w(a)$, we have :
\begin{equation*}
\Gamma(z) \Tr A \ho^{-z}= \int_0^\infty t^{z-1} \Tr A e^{-t\ho}dt \qquad \re z >0
\end{equation*}
$\Gamma(z)$ has  simple poles at nonpositive integers, and its  residue at $z=0$ is $1$. By Proposition \ref{antisymmetry} we have $\Res(a)=0 $ and hence $\Tr A \ho^{-z}$ is holomorphic at $z=0$.
As a consequence $\Gamma(z) \Tr A \ho^{-z}$ has at most simple poles at nonnegative  integers $z$, and at most  second order poles at  negative integers.
The inverse Mellin transform converts this information into the  
 asymptotic expansion of $\Tr A e^{-t\ho}$ 
\begin{equation}\label{expansion}
  \Tr A e^{-t\ho} \sim \sum \limits_{k=-n-l}^\infty a_k t^k+ \sum_{k=1}^\infty r_k t^k \log t  \text{ as $t \downarrow 0$.}
\end{equation}
The coefficients $a_k$, $k\le 0$, are the residues of $\Gamma(z) \Tr A \ho^{-z}$ at $z=-k$.
In particular
\begin{equation*}
a_0=  \Res|_{z=0} \,\Gamma(z) \Tr A \ho^{-z}=\Trh(a)  
\end{equation*}
In summary:
\begin{proposition}\label{heatkernel}
If  $\sigma=(w_+, w_-) \in \Alg$ then $\tau(\sigma)$ is equal to the constant term in the asymptotic expansion in powers of $t$ (for $t\downarrow 0$) of the expression
\begin{equation*}  \Tr \,(\Op^w(w_+)e^{-t\ho})-(-1)^n\Tr\,(\Op^w(w_-)) e^{-t\ho})\end{equation*}
\end{proposition}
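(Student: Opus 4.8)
The plan is to collapse the two-term expression into a single Weyl symbol, and then to read off the constant term of its short-time heat trace from the pole structure of an associated zeta function. Set $a:=w_+-(-1)^nw_-$. Because $(w_+,w_-)\in\Alg$ we have $w_+,w_-\in\mcW_q$ with $\lambda(w_+)=\iota\circ\lambda(w_-)$, so $a\in\mcW_q^{2l}$ for some $l$, and by linearity of $\Op^w$,
\[
\Tr\!\big(\Op^w(w_+)e^{-t\ho}\big)-(-1)^n\Tr\!\big(\Op^w(w_-)e^{-t\ho}\big)=\Tr\!\big(\Op^w(a)e^{-t\ho}\big).
\]
The functional $\Trh$ is linear on $\mcW_q$ (both $\zeta_{(\cdot)}$ and $\Res$ are linear in the symbol), and since $\lambda(w_-)=\iota\circ\lambda(w_+)$ forces $\Res(a)=\Res(w_+)-(-1)^n\Res(w_-)=0$ by Proposition~\ref{antisymmetry}, the function $\zeta_a$ is holomorphic at $z=0$ and
\[
\tau(\sigma)=\Trh(w_+)-(-1)^n\Trh(w_-)=\Trh(a)=\zeta_a(0).
\]
So it remains to show that the constant term of the $t\downarrow0$ expansion of $\Tr(\Op^w(a)e^{-t\ho})$ is $\zeta_a(0)$.

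For this I would use the Mellin bridge between $\zeta_a$ and the heat trace. Since $\ho$ is strictly positive, $\ho^{-z}=\Gamma(z)^{-1}\int_0^\infty t^{z-1}e^{-t\ho}\,dt$ for $\re z>0$; moreover $e^{-t\ho}=\Op^w(h_t)$ with $h_t\in\mcS(\RR^{2n})$ by Mehler's formula \eqref{Mehler}, so $\Op^w(a)e^{-t\ho}=\Op^w(a\#h_t)$ is trace class for every $t>0$, with $\|\Op^w(a)e^{-t\ho}\|_1=O(t^{-n-l})$ as $t\downarrow0$ and exponential decay as $t\to\infty$ (the latter from the spectral gap of $\ho$). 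Multiplying by $\Op^w(a)$, taking the trace and exchanging trace with integral (legitimate by these bounds) gives, for $\re z>n+l$,
\[
\Gamma(z)\,\zeta_a(z)=\int_0^\infty t^{z-1}\,\Tr\!\big(\Op^w(a)e^{-t\ho}\big)\,dt .
\]
Now $\zeta_a$ is holomorphic except for at worst simple poles at $z=n+l,\,n+l-1,\dots$, while $\Gamma$ has simple poles at $z=0,-1,-2,\dots$ with unit residue at $z=0$; hence $\Gamma\zeta_a$ has simple poles at the positive integers $\le n+l$ and at $z=0$, and at worst double poles at the negative integers. Since $\Gamma(z)\zeta_a(z)$ decays rapidly on vertical lines, the inverse Mellin transform $\Tr(\Op^w(a)e^{-t\ho})=\frac1{2\pi i}\int_{\re z=c}\Gamma(z)\zeta_a(z)\,t^{-z}\,dz$ may be evaluated by pushing the contour to the left and collecting residues, which yields precisely an expansion
\[
\Tr\!\big(\Op^w(a)e^{-t\ho}\big)\sim\sum_{k\ge -n-l}a_k\,t^k+\sum_{k\ge1}r_k\,t^k\log t,\qquad t\downarrow0,
\]
with the $\log t$ terms arising only from the genuine double poles, i.e.\ at the strictly negative integers; in particular there is no $\log t$ at order $t^0$, precisely because $\zeta_a$ is regular at $0$.

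It then remains to match constant terms: the residue of $\Gamma(z)\zeta_a(z)t^{-z}$ at the simple pole $z=0$ is $\big(\Res_{z=0}\Gamma(z)\big)\zeta_a(0)=\zeta_a(0)$, so the coefficient $a_0$ extracted by the contour shift equals $\zeta_a(0)=\Trh(a)=\tau(\sigma)$, as required. I expect the only genuine work to be in the middle paragraph: justifying the trace/integral interchange and the inverse-Mellin contour shift carefully enough to guarantee that the short-time heat trace admits an asymptotic expansion of exactly the asserted shape. A self-contained alternative for that step is to compute $\Tr(\Op^w(a)e^{-t\ho})=(2\pi)^{-n}\int a(x,\xi)\,h_t(x,\xi)\,dx\,d\xi$ directly from Mehler's formula and expand the Gaussian in powers of $\tanh t$, which also exhibits the $t^k\log t$ terms as coming from the non-integrable homogeneous components of $a$ (those of degree $\le-2n$); either way the identification of $a_0$ with $\zeta_a(0)$ proceeds through the Mellin identity above.
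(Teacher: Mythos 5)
Your argument is correct and is essentially the paper's own proof: both reduce to the single symbol $a=w_+-(-1)^nw_-$, use $\Res(a)=0$ (Proposition~\ref{antisymmetry}) to get $\tau(\sigma)=\Trh(a)=\zeta_a(0)$, and then read the constant term of the short-time heat trace off the pole structure of $\Gamma(z)\zeta_a(z)$ via the (inverse) Mellin transform. Your closing remark about computing $\Tr(\Op^w(a)e^{-t\ho})$ directly from Mehler's formula is in fact the route the paper takes immediately afterwards in the proof of Proposition~\ref{tautilde}.
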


We will need the following lemma.
\begin{lemma}\label{trab}
Let $a \in \mcW$, $b \in \mcS$. Then 
\begin{equation*} \Tr \Op^w(a\# b) =\Tr \Op^w(ab)\end{equation*}
where $ab$ is the pointwise product of functions $(ab)(x,\xi)=a(x,\xi)b(x,\xi)$.
\end{lemma}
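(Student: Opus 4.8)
The plan is to reduce the identity to the classical fact that the Weyl symbol calculus is ``trace-correct'': the operator trace of $\Op^w(c)$, for $c$ of trace class, is $(2\pi)^{-n}\int c$. Since $b\in\mcS$, both $a\#b$ and the pointwise product $ab$ are Schwartz class (the first because $\mcW\#\mcS\subset\mcS$, the second obviously), so both operators $\Op^w(a\#b)$ and $\Op^w(ab)$ are trace class and
\[
\Tr\Op^w(a\#b)=(2\pi)^{-n}\int_{\RR^{2n}}(a\#b)(v)\,dv,\qquad
\Tr\Op^w(ab)=(2\pi)^{-n}\int_{\RR^{2n}}a(v)b(v)\,dv.
\]
Thus the lemma is equivalent to the statement that $\int(a\#b)\,dv=\int ab\,dv$, i.e.\ that the linear functional $c\mapsto\int c\,dv$ annihilates $a\#b-ab$.

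First I would establish this for $a,b\in\mcS$, where there is no convergence issue. Using the integral formula
\[
(a\#b)(v)=\frac{1}{(2\pi)^{2n}}\iint e^{2i\omega(x,y)}a(v+x)b(v+y)\,dx\,dy,
\]
integrate in $v$ and apply Fubini (legitimate by rapid decay); after the substitution $v\mapsto v-x$ the $v$–integral of $e^{2i\omega(x,y)}b(v+y-x)$ against $a(v)$ is handled by noting that $\int_{\RR^{2n}}\Big(\frac{1}{(2\pi)^{2n}}\iint e^{2i\omega(x,y)}\cdots\Big)$ collapses because $\iint e^{2i\omega(x,y)}f(x)g(y)\,dx\,dy$, integrated appropriately, picks out the value at $x=y=0$ up to the normalization fixed by $(2\pi)^{2n}$ and the Jacobian of $(x,y)\mapsto 2\omega(x,\cdot)$. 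Concretely, one shows $\int (a\#b)\,dv=\int a\,dv\cdot$(delta-type reduction) $=\int ab\,dv$ directly from the oscillatory-integral identity $\frac{1}{(2\pi)^{2n}}\iint e^{2i\omega(x,y)}\,dx\,dy$ acting as the kernel of the identity; equivalently, invoke associativity and the fact that $\Tr$ is a trace on $\mcS$: $\Tr\Op^w(a\#b)=\Tr(\Op^w(a)\Op^w(b))$, and expand $\Tr$ via Schwartz kernels. This is the routine core and I will not grind it here.

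The only genuine point requiring care is that in the lemma $a\in\mcW$ is merely a symbol of some finite order, not Schwartz, so the pointwise product $ab$ and the Moyal product $a\#b$ are a priori only defined as the honest Schwartz-class functions $\Op^w(a\#b)$ and the multiplication $a(x,\xi)b(x,\xi)$, and one must check the manipulations above remain valid. I would handle this by a density/continuity argument: approximate $a$ by $a_\varepsilon:=\varphi(\varepsilon\cdot)\,a\in\mcS$ (with $\varphi$ a compactly supported cutoff equal to $1$ near $0$), observe $a_\varepsilon\#b\to a\#b$ and $a_\varepsilon b\to ab$ in the appropriate Fréchet topology of $\mcS$ as $\varepsilon\to 0$ (using the seminorm estimates \eqref{seminorms1} for $a$ together with the rapid decay of $b$), and pass to the limit in the already-established Schwartz-class identity, using that both $\Tr\circ\Op^w$ and $c\mapsto\int c$ are continuous on $\mcS$. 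The main obstacle is thus purely the justification of these limits — verifying that multiplication by the fixed Schwartz function $b$ and the Moyal product against $b$ are continuous maps $\mcW^m\to\mcS$ — which follows from \eqref{eqn:asymptotic} and the standard symbol estimates but must be stated cleanly.
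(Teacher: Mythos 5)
Your proposal is correct in substance and rests on the same underlying identity as the paper's proof, namely that the phase--space integral of the Moyal product equals that of the pointwise product; but it is organized differently. The paper proves the lemma in one stroke: it writes the Schwartz kernel $K(x,y)$ of the composition $\Op^w(a)\Op^w(b)$ as an oscillatory integral, restricts to the diagonal, and integrates, so that the $dv$--integration produces the delta function in $\zeta-\tau$ that collapses everything to $(2\pi)^{-n}\int ab$. Since the integral is read as an oscillatory integral throughout, this computation is valid directly for $a\in\mcW$ of arbitrary finite order, and no approximation is needed. You instead first reduce to $\int(a\# b)=\int ab$ via the trace formula on $\mcS$, prove this when $a$ is also Schwartz, and then extend by cutting off $a$. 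That route works, but it adds a layer the paper avoids: you must actually verify that $a_\varepsilon\to a$ in a symbol topology of slightly higher order and that $c\mapsto c\# b$ is continuous from that space into $\mcS$ (or argue operator-theoretically that $\Op^w(a_\varepsilon-a)\Op^w(b)\to 0$ in trace norm); and as written you have deferred both this and the core Schwartz-class computation, so the argument is a correct plan rather than a complete proof. Two small cautions if you do grind it out: the oscillatory integral picks out $x=y$ (equivalently $\zeta=\tau$ in the paper's kernel variables), not $x=y=0$; and the evaluation of $\iint e^{2i\omega(x,y)}f(x)\,dx\,dy$ produces a factor $2^{-2n}$ relative to $(2\pi)^{2n}\,f(0)$, so the normalization constant in front of the integral formula for $\#$ must be tracked carefully to land exactly on $\int ab$.
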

\begin{proof}
The Schwarz kernel of $\Op^w(a) \Op^w(b)$ is the oscillatory integral
\begin{equation*}
K(x,y)=(2 \pi)^{-2n}\int a\left(\frac{x+z}{2}, \zeta\right)b\left(\frac{y+z}{2}, \tau\right)e^{i(x-z) \cdot \zeta-i (y-z) \cdot \tau}  d\zeta\, d \tau\, dz
\end{equation*}
With $x=y$ and the change of variables $(u,v) = ((x+z)/2, x-z)$ we find
\begin{align*}
\Tr \Op^w(a) \Op^w(b) = \int K(x,x) dx &= 
(2 \pi)^{-2n}\int a\left(u, \zeta\right)b\left(u, \tau\right)e^{iv \cdot (\zeta-\tau)}  d\zeta\, d \tau\, du\,dv\\
&= (2 \pi)^{-n}\int a(u, \zeta)b(u, \zeta) du \, d\zeta
\end{align*}
\end{proof}

\begin{proof}[Proof of Proposition \ref{tautilde}]
By Proposition \ref{heatkernel} and  Lemma \ref{trab}, $\tau(\sigma)$ is equal to the constant term in the 
asymptotic expansion in powers of $t$ of the integral
\begin{equation*}
(2\pi)^{-n} \int (w_+-(-1)^nw_-) h_t dx \, d\xi
\end{equation*}
 with $h_t$ as in (\ref{Mehler}).
This expression is equal to
\begin{equation*}
(2\pi)^{-n} \int \mathcal{R}(\sigma) h_tdx \, d\xi+2(2\pi)^{-n}  \sum \int w_{2m} h_t dx \, d\xi
\end{equation*}
where the summation is over $m \in \ZZ$ such that $-l \le m \le n-1$ and $n+m$ is odd.
From  Mehler's formula (\ref{Mehler})  we see that $h_t(x,\xi)$ converges to $1$ as $t\downarrow 0$, uniformly on compact subsets of $\RR^{2n}$.
Since $\mathcal{R}(\sigma)$ is integrable, we obtain
\begin{equation*}\lim_{t\downarrow 0} \int \mathcal{R}(\sigma) h_t \, dx \, d\xi = \int \mathcal{R}(\sigma)\, dx \, d\xi\end{equation*}
Since $w_{2m}$ is homogeneous of degree $-2m$, we get
\begin{equation*}
\int w_{2m} h_t dx \, d\xi= \frac{c_m}{(\cosh t)^n}\int_0^{\infty} \rho^{2m-2n-1} e^{-\rho^{-2}\tanh t} d\rho\\
=\frac{c_m \Gamma(n-m)}{2(\sinh t)^{n-m}(\cosh t)^m}
\end{equation*}
for a constant $c_m$ which depends on $w_{2m}$. 
For  odd $n+m$ (and so odd $n-m$)
the function on the right hand side is odd, and hence the constant coefficient in its Laurent expansion at  $t=0$ vanishes. 
We conclude that
\begin{equation*}\tau(\sigma)=(2\pi)^{-n} \int \mathcal{R}(\sigma)\, dx \, d\xi\end{equation*}
\end{proof}


\subsection{Continuity of $\tau$}

We identify the  algebra of order zero Heisenberg principal symbols $\Symb$
 with the order zero subalgebra $\Alg^0_H$ of $\Alg_H$. 
As a vector space $\Symb$ is the space of smooth functions $C^\infty(S^*M)$,
which  is a Fr\'echet space in the usual way.

\begin{proposition}\label{tau_continuous}
The trace $\tau \colon \Alg^0\to \CC$ is continuous as a linear functional on the Fr\'echet space $C^\infty(S^{2n})$. 
\end{proposition}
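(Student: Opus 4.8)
The plan is to use the concrete integral formula for $\tau$ established in Proposition \ref{tautilde}, namely $\tau(\sigma) = (2\pi)^{-n}\int \mathcal{R}(\sigma)\,dx\,d\xi$, and to estimate this integral by the natural seminorms on $C^\infty(S^{2n})$. Recall that an element of $\Alg^0$ corresponds to a symbol $w_+\in\mcW_q^0$ with $w_+\sim \sum_{m\ge 0} w_{2m}$, where each $w_{2m}$ is homogeneous of degree $-2m$; as a Fr\'echet space $\Alg^0 \cong C^\infty(S^{2n})$ via $\sigma \mapsto (w_+)|_{S^{2n}}$, and in fact the full asymptotic expansion, hence each homogeneous term $w_{2m}$, depends continuously (indeed linearly and continuously) on the restriction $w_+|_{S^{2n}}$, by the construction of the Weyl symbol calculus. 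The quantity $\mathcal{R}(\sigma) = w_+ - (-1)^n w_- - 2\sum_{-l\le m\le n-1,\ n+m\ \mathrm{odd}} w_{2m}$ is, for $\sigma\in\Alg^0$, a symbol of order $0$ from which the homogeneous terms of order $0,-2,\dots,-2(n-1)$ have been subtracted off (the terms surviving the subtraction are exactly those with $n+m$ even, i.e. of order $-2m$ with $n-m$ even; the degree $-2n$ terms cancel in $w_+-(-1)^nw_-$), so $\mathcal{R}(\sigma)$ decays like $\rho^{2n}$ away from the origin in a quantitative sense governed by the seminorm estimate \eqref{seminorms2}, while near the origin it is controlled by the finitely many homogeneous terms being subtracted.

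The key steps, in order: First I would fix the cut-off function $\varphi$ from \eqref{seminorms2} and split $\mathcal{R}(\sigma)$ as $\varphi\cdot\mathcal{R}(\sigma) + (1-\varphi)\cdot\mathcal{R}(\sigma)$. On the compact set $\{\|x\|^2+\|\xi\|^2\le 2\}$ the integrand $(1-\varphi)\mathcal{R}(\sigma)$ is a finite sum of the functions $w_+$, $w_-$, and the homogeneous pieces $w_{2m}$ with $m\le n-1$; each $w_{2m}(x,\xi) = \rho^{-2m}w_{2m}(\theta)$ with $2m\le 2n-2 < 2n$, so $w_{2m}$ is integrable near the origin and its integral over the unit ball is bounded by $C\sup_{S^{2n-1}}|w_{2m}(\theta)|$, which in turn is bounded by a seminorm of $w_+|_{S^{2n}}$. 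Second, on the region $\{\|x\|^2+\|\xi\|^2\ge 1\}$ I would use the asymptotic estimate: since the homogeneous terms of order $0,-2,\dots,-2(n-1)$ have been removed from $\mathcal{R}(\sigma)$ — those of order $-2n$ cancelling in $w_+-(-1)^nw_-$ — the remainder obeys, by \eqref{seminorms2} with $N = n+1$ (and a corresponding version for $w_-$), a bound $|\mathcal{R}(\sigma)(x,\xi)| \le C_N (1+\|x\|^2+\|\xi\|^2)^{-(n+1)}$ times finitely many of the Fr\'echet seminorms of $w_+|_{S^{2n}}$; since $(1+\|x\|^2+\|\xi\|^2)^{-(n+1)}$ is integrable over $\RR^{2n}$, this gives $\bigl|\int (1-\varphi)\mathcal{R}(\sigma)\,dx\,d\xi\bigr| \le C\,\|w_+|_{S^{2n}}\|_{C^{k}(S^{2n})}$ for some finite $k$ depending only on $n$. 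Combining the two estimates yields $|\tau(\sigma)| \le C\|\sigma\|_{C^k(S^{2n})}$, which is continuity.

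The main obstacle, and the step needing the most care, is the bookkeeping in Step 2: one must verify that the homogeneous terms left in $\mathcal{R}(\sigma)$ — those with $n+m$ even — together with the cancellation of the order $-2n$ term in $w_+-(-1)^nw_-$, really do allow an application of \eqref{seminorms2} with $N$ large enough that $(1+\|\cdot\|^2)^{-N/2}$ is integrable on $\RR^{2n}$, i.e. $N > 2n$, equivalently $N \ge n+1$ in the $\rho$-power bookkeeping (recall the estimate \eqref{seminorms2} is in terms of $(1+\|x\|^2+\|\xi\|^2)^{-N/2}$, so one needs the partial sum to include all homogeneous terms down through order $-2n$, which for $\mathcal{R}(\sigma)$ is exactly what the definition of $\mathcal{R}$ and the cancellation provide). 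One must also confirm that the passage from the restriction $w_+|_{S^{2n}}$ to the homogeneous components $w_{2m}$ and to the remainder bounds in \eqref{seminorms1}, \eqref{seminorms2} is continuous in the relevant Fr\'echet topologies — this is standard in the theory of polyhomogeneous symbols but should be stated explicitly. Everything else is routine estimation, and the final constant $C$ and order $k$ depend only on $n$.
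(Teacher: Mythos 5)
Your proposal is correct and takes essentially the same approach as the paper: split the integral of $\mathcal{R}(\sigma)$ at the unit sphere, bound the compact part by sup-norms of $w_\pm$ and of the integrable homogeneous terms $w_{2m}$ ($m\le n-1$), and bound the part at infinity by the fact that the expansion of $\mathcal{R}(\sigma)$ begins at order $-2n-2$, so the remainder is $O(\rho^{2n+2})$ and hence integrable (your overview says ``decays like $\rho^{2n}$'' and labels the surviving terms of $w_+-(-1)^nw_-$ as those with $n+m$ even rather than odd, but these are harmless slips corrected by your own Step 2). The one step you defer to ``standard polyhomogeneous symbol theory'' --- that the constants in \eqref{seminorms2} are controlled by $C^\infty(S^{2n})$ seminorms of $\sigma$ --- is precisely what the paper makes explicit: for order-zero symbols $\widetilde{w}_\pm$ is the order-$(n+1)$ Taylor remainder of $\sigma$ in the transverse coordinate $\rho_q$ at the equator, hence bounded by $\sup|\partial_{\rho_q}^{n+1}\sigma|\,|\rho_q|^{n+1}/(n+1)!$.
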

\begin{proof}
With  $\sigma=(w_+, w_-)\in \Alg^0$, we need to give an upper bound for  $\int \mathcal{R}(\sigma) dx\, d\xi$
by a linear combination  of continuous  seminorms of $C^\infty(S^{2n})$.
Here we shall think of  $\sigma$ as a single smooth function on $S^{2n}$. 
We  use the coordinate $\rho_q$ near the equator, which is related to $(x,\xi)$ via $\rho_q=\pm \rho^2=\pm (\|x\|^2+\|\xi\|^2)$ (see section \ref{sec:Hsymb}).

In the region $\rho\le 1$  the asymptotic expansions 
\begin{equation*}w_+ \sim \sum_{m=0}^\infty w_{2m}\qquad w_- \sim \sum_{m=0}^\infty (-1)^m w_{2m}\end{equation*}
correspond to Taylor series of $\sigma$ near the equator of $S^{2n}$ in powers of $\rho_q$.
We therefore obtain estimates for the remainders $\widetilde{w}_\pm$ defined in \eqref{wpm},
\begin{equation*}
|\widetilde{w}_\pm(x, \xi)| \le M\frac{|\rho_q|^{n+1}}{(n+1)!} \qquad M=\sup_{\rho_q \le 1} |\partial_{\rho_q}^{n+1}\sigma|
\end{equation*}
From  $\mathcal{R}(\sigma)=\widetilde{w}_+-(-1)^n\widetilde{w}_-$ we obtain
\begin{equation*}
\left| \int_{\rho \le 1} \mathcal{R}(\sigma) dx\, d\xi  \right|
 \le C_n \sup_{\rho_q \le 1} |\partial_{\rho_q}^{n+1}\sigma|
 \end{equation*}
For the  integral over the region $\rho\ge 1$, we write
\begin{equation*} \left| \int_{\rho \ge 1} \mathcal{R}(\sigma) dx\, d\xi \right|\le   \int_{\rho \ge 1} |w_+(x,\xi)| dx\, d\xi +
 \int_{\rho \ge 1} |w_-(x,\xi)| dx\, d\xi + 2 \sum \int_{\rho \ge 1} |w_{2m}(x,\xi)| dx\,d\xi\end{equation*}
(with summation over $0\le m\le n-1$ and $m+n$ odd).
The first two terms are bounded by a  multiple of the supremum norm $\|\sigma\|_\infty$.
Finally, for the homogeneous terms $w_{2m}$ we let
\begin{equation*}w_{2m}(x,\xi)=f_{2m}(\theta)\rho^{2m}\end{equation*}
where $f_{2m}(\theta)=w_{2m}(\rho x, \rho \xi)$ is a smooth function of $\theta=(\rho x,\rho \xi)\in S^{2n-1}$.
We obtain estimates
\begin{equation*}\int_{\rho \ge 1} |w_{2m}(x,\xi)| dx\,d\xi\le D_{n,m} \sup_{\theta\in S^{2n-1}} |f_{2m}(\theta)|\end{equation*}
The supremum of $f_{2m}$ is a continuous seminorm of $C^\infty(S^{2n})$ because  $f_{2m}$
is equal to the restriction to the equator of the derivative $\partial^m_{\rho_q}\sigma$.

\end{proof}

Let $\tilde{\rho}(x,\xi)>0$ be a strictly positive smooth function  on $\RR^{2n}$,
which is equal to $ \rho(x,\xi)=(\|x\|^2+\|\xi\|^2)^{-1/2}$ if  $\rho\le 1$.
Then the map
\[ \Alg^{2l}\to \Alg^0\qquad (w_+,w_-)\mapsto (\tilde{\rho}^{2l}w_+, (-1)^l\tilde{\rho}^{2l}w_-)\]
is a linear isomorphism of vector spaces  $\Alg^{2l}\cong C^\infty(S^{2n})$.
Via this identification, the continuous seminorms of $C^\infty(S^{2n})$ determine seminorms on $\Alg^{2l}$,
making $\Alg^{2l}$ into a Fr\'echet space.
The proof of Proposition \ref{tau_continuous}, with minor changes, shows that $\tau \colon \Alg^{2l}\to \CC$ is continuous.

With the Fr\'echet space structure as discussed above, we have continuous injections
\[ \Alg^0\to \Alg^2\to \Alg^4\to \cdots\]
The algebra $\Alg$ can be given the direct limit topology, making it into an LF space.
Then $\tau \colon \Alg\to \CC$ is continuous.

\subsection{The trace $\tau$ and the connection}

By Corollary \ref{tau_invariance} and Proposition \ref{tau_continuous}, the trace $\tau$ extends to a linear map
\[ \tau \colon \Alg_H=C^\infty(P_H;\Alg)^{Sp(2n)}\to C^\infty(M)\]
with the property
\[ \tau(ab)=\tau(ba)\qquad a,b\in \Alg_H\]
Since the continuous linear map $\tau \colon \Alg^{2l}\to \CC$ is automatically smooth,
it follows that if $\sigma\in \Alg_H$ then $\tau(\sigma)\in C^\infty(M)$.
Likewise, we have 
\begin{equation*}
\tau  \colon  \Omega^k(P_H; \Alg)_{basic} \to \Omega^k(P_H)_{basic} 
\end{equation*}
or more succinctly,
\begin{equation*}
\tau  \colon  \Omega^k(\Alg_H)\to \Omega^k(M)
\end{equation*}
Then $\tau$ is a graded trace,
\begin{equation*}
\tau(\eta_1\eta_2)=(-1)^{k_1+k_2}\tau(\eta_2\eta_1) \qquad  \eta_j \in \Omega^{k_j}(\Alg_H)
\end{equation*}
\begin{lemma} For all $\eta \in \Omega^k(M, \mathcal{A}_H)$, 
\begin{equation*}\tau(\con(\eta)) = d \tau (\eta)
\end{equation*}
\end{lemma}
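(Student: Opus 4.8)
The identity $\tau(\con(\eta)) = d\tau(\eta)$ is local on $M$ and compatible with the passage to basic forms on $P_H$, so it suffices to work on $P_H$ with $\Alg$-valued forms and the formula $\con(\eta) = d\eta + \beta\cdot\eta$. Thus the plan is to show
\[
\tau(d\eta + \beta\cdot\eta) = d(\tau(\eta)) \qquad \eta\in\Omega^k(P_H;\Alg).
\]
The first term splits off immediately: since $\tau\colon\Alg\to\CC$ is a \emph{continuous linear} map, it commutes with the exterior derivative applied coefficientwise, so $\tau(d\eta) = d(\tau(\eta))$. Here one uses that $\tau$ is continuous on each $\Alg^{2l}$ (Proposition \ref{tau_continuous} and the remarks following it), so that differentiating under $\tau$ is justified. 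It therefore remains to prove that the correction term vanishes:
\[
\tau(\beta\cdot\eta) = 0.
\]

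**The key step.** The action $\beta\cdot\eta$ is the infinitesimal action of $\mathfrak{sp}(2n)$ on $\Alg$, wedged with the $\mathfrak{sp}(2n)$-valued $1$-form $\beta$. By Proposition \ref{action_is_inner}, this action is \emph{inner}: for $\phi\in\mathfrak{sp}(2n)$ and $a\in\Alg$ one has $\phi.a = [\nu(\phi),a]$. Writing $\beta$ locally as $\sum_j \beta_j\otimes\phi_j$ with $\beta_j$ ordinary $1$-forms and $\phi_j\in\mathfrak{sp}(2n)$, the term $\beta\cdot\eta$ is a sum of expressions $\beta_j\wedge[\nu(\phi_j),\eta]$, i.e. a sum of graded commutators with the $1$-form $\boldsymbol{\beta}:=\nu(\beta)=\Iso(\beta)\in\Omega^1(\Alg_H)$. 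Since $\tau$ is a \emph{graded} trace on $\Omega^\bullet(\Alg_H)$ (proved just above the statement, extending Theorem \ref{tau is a trace}), it annihilates graded commutators:
\[
\tau([\boldsymbol{\beta},\eta]) = \tau(\boldsymbol{\beta}\eta) - (-1)^{|\boldsymbol\beta||\eta|}\tau(\eta\boldsymbol\beta) = 0.
\]
Hence $\tau(\beta\cdot\eta)=0$ and the lemma follows. Concretely, from $\con(\eta) = d\eta + [\Iso(\beta),\eta]$ (equation \eqref{defcon}) we get $\tau(\con(\eta)) = \tau(d\eta) + \tau([\Iso(\beta),\eta]) = d\tau(\eta) + 0$.

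**Where the care is needed.** The two ingredients — that $\tau$ commutes with $d$ applied coefficientwise, and that $\tau$ kills graded commutators — are each already established, so the only real point is bookkeeping: one must check that the map $\beta\cdot(\,\cdot\,)$ really is realized as the graded commutator with the fixed element $\Iso(\beta)$ after the identification $\Alg_H = C^\infty(P_H;\Alg)^{Sp(2n)}$, and that the trace property $\tau(\eta_1\eta_2) = (-1)^{k_1 k_2}\tau(\eta_2\eta_1)$ applies with one factor of form-degree $1$ (the $\boldsymbol{\beta}$), where the sign works out to exactly cancel the two terms of the commutator regardless of the parity of $|\eta|$. The passage from $\Omega^\bullet(P_H;\Alg)$ to basic forms on $M$ is routine since both $\con$ and $\tau$ are defined so as to respect the basic subcomplex, and $d$ on basic forms agrees with $d$ on $M$. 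I do not expect a genuine obstacle here; the content of the lemma is entirely carried by Proposition \ref{action_is_inner} and the graded-trace property.
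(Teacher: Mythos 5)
Your proof is correct and follows essentially the same route as the paper's: write $\con(\eta)=d\eta+[\Iso(\beta),\eta]$, kill the commutator term using the graded trace property of $\tau$, and commute $\tau$ with $d$ using linearity and continuity. The paper's proof is just this computation in one line; your added care about the inner-action identification and the sign bookkeeping is fine but not a different argument.
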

\begin{proof}
\begin{equation*}\tau(\con (\eta)) =\tau(d\eta +[\Iso(\alpha), \eta])=\tau(d \eta)=d \tau(\eta).\end{equation*}
\end{proof}

\begin{lemma}\label{tau_of_curv}
For every non-negative integer $k=0,1,2,\dots$,
\[ \tau(\curv^k)=0\]
\end{lemma}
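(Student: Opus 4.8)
The plan is to reduce the claim to Corollary \ref{tau_of_polynomial}, which says that $\tau$ vanishes on pairs of polynomials. The essential point is that, although the fibers of $\Alg_H$ are large, the curvature form $\curv=\nu(\theta)$ is assembled entirely out of the image of the Lie algebra map $\nu\colon\mathfrak{sp}(2n)\to\Alg$, and that image consists of pairs $\nu(\phi)=(\mu^{-1}(\phi),-\mu^{-1}(\phi))$ whose components are \emph{quadratic polynomials}: by construction $\mathfrak{g}\subset\mcW$ is the space of purely imaginary homogeneous polynomials of degree $2$.

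First I would expand everything on the frame bundle $P_H$. Fixing a basis $\{e_a\}$ of $\mathfrak{sp}(2n)$ and writing $\theta=\sum_a\theta^a e_a$ with $\theta^a\in\Omega^2(P_H)_{basic}$ ordinary (scalar) basic $2$-forms, we get $\curv=\sum_a\theta^a\,\nu(e_a)$. Since the $\theta^a$ are of even degree they commute and pull out of products, so
\begin{equation*}
\curv^k=\sum_{a_1,\dots,a_k}\theta^{a_1}\wedge\cdots\wedge\theta^{a_k}\otimes\bigl(\nu(e_{a_1})\cdots\nu(e_{a_k})\bigr),
\end{equation*}
the last product being taken in the fibers of $\Alg_H$. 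Applying the $\Omega(M)$-linear graded trace $\tau$, the identity $\tau(\curv^k)=0$ reduces to showing $\tau\bigl(\nu(\phi_1)\cdots\nu(\phi_k)\bigr)=0$ for all $\phi_1,\dots,\phi_k\in\mathfrak{sp}(2n)$, with the convention that the empty product ($k=0$) is the unit $(1,1)\in\Alg$.

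For the reduced statement I would compute the product directly. With $P_i:=\mu^{-1}(\phi_i)\in\mathfrak{g}$ and the product rule $(u_+,u_-)(w_+,w_-)=(u_+\# w_+,w_-\# u_-)$ of $\Alg$, one finds
\begin{equation*}
\nu(\phi_1)\cdots\nu(\phi_k)=\bigl(P_1\#\cdots\# P_k,\ (-1)^k\,P_k\#\cdots\# P_1\bigr).
\end{equation*}
Each $P_i$ is a polynomial, and the Moyal product of polynomials is exactly their (finite) $\#$-product, hence again a polynomial; so both components are polynomials on the symplectic vector space. Corollary \ref{tau_of_polynomial}, applied fiberwise (``being a polynomial'' is independent of the choice of symplectic coordinates), then gives $\tau\bigl(\nu(\phi_1)\cdots\nu(\phi_k)\bigr)=0$, and therefore $\tau(\curv^k)=0$.

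I do not anticipate a real obstacle here: the whole argument is the observation that $\curv$ is built from quadratic polynomials together with the fact that $\tau$ annihilates polynomials. The only things needing minor care are the sign and degree bookkeeping when the even-degree forms $\theta^a$ are factored out of $\curv^k$, and the remark that $\nu(\phi_1)\cdots\nu(\phi_k)$ genuinely lies in the fiber algebra $\Alg(H_p^*,\omega_p^*)$ --- but this is precisely the closure computation already carried out in Section \ref{section_curvature} when showing $\curv\in\Omega^2(\Alg_H)$.
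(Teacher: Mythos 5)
Your proof is correct and follows essentially the same route as the paper: the paper's proof simply observes that $\curv$ is fiberwise a pair of quadratic polynomials, so $\curv^k$ is fiberwise a pair of polynomials (the $\#$-product of polynomials being a polynomial), and invokes Corollary \ref{tau_of_polynomial}. Your basis expansion and the explicit formula for $\nu(\phi_1)\cdots\nu(\phi_k)$ are a more detailed rendering of the same two-line argument.
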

\begin{proof}
By definition, $\curv=\nu(\theta)$ is an element in $\Alg_H$
which restricts in each fiber $\Alg(H_p^*,\omega_p)$ to an element $(w_+,w_-)$,
where $w_+$ and $w_-$ are polynomials of degree 2.
The lemma then follows from Corollary \ref{tau_of_polynomial}.

\end{proof}

\section{A generalized cycle and its character}\label{section_character}

The goal of this section is to construct the homomorphism 
\[\chi  \colon  K_1(\Symb) \to H^{odd}(M)\]
appearing in our index formula. The construction uses the connection, curvature and trace constructed in the preceding sections and relies on  techniques from cyclic (co)homology theory. Therefore  we give a very brief review of definitions from cyclic homology theory in Section \ref{cyclic section} and of cycles and generalized cycles in Section \ref{section cycles}. In Section \ref{section character map} we spell out the construction of the character map in a general geometric context, and in Section \ref{section character2} we specialize it to our situation.

\subsection{The Chern character in cyclic homology}\label{cyclic section}
In this section we give a very brief overview of the periodic cyclic homological complex, mostly to fix the notations. 
The standard reference for this material is \cite{loday}.

For a complex unital algebra $A$ set $C_l(A):= A \otimes(A/(\mathbb{C} \cdot 1))^{\otimes l}$, $l \ge 0$.
One defines differentials $b \colon  C_l(A) \to C_{l-1}(A)$ and $B  \colon  C_l(A) \to C_{l+1}(A)$ by
\begin{equation*}
b (a_0\otimes a_1\otimes \ldots a_l):= \sum_{i=0}^{l-1} (-1)^ia_0\otimes \ldots a_ia_{i+1}\otimes \ldots a_l+(-1)^la_la_0\otimes a_1\otimes \ldots a_{l-1}
\end{equation*}
\begin{equation*}
B (a_0\otimes a_1\otimes \ldots a_l):= \sum_{i=0}^{l} (-1)^{li}1 \otimes a_i\otimes a_{i+1}\otimes \ldots a_{i-1}\text{ (with $a_{-1}:=a_l)$}
\end{equation*}
One verifies directly that $b$, $B$ are well defined and satisfy $b^2=0$, $B^2=0$, $Bb+bB=0$.  Let $u$ be a formal variable of degree $-2$. 
The space of periodic cyclic chains of degree $ i \in \ZZ$ is defined by
\begin{equation*}
CC^{per}_{i}(A) = \left(C_{\bullet}(A)[u^{-1},u]]\right)_i=\prod \limits_{-2n+l=i} u^nC_l(A).
\end{equation*}
Note that $CC^{per}_{i}(A)= uCC^{per}_{i+2}(A)$.
We will write a chain in $CC^{per}_i(A)$ as
$ \alpha = \sum \limits_{i+2m\ge 0}u^m \alpha_{i+2m}$ where $\alpha_l \in C_l(A)$.
The boundary is given by $b+uB$ where $b$ and $B$ are the Hochschild and Connes boundaries of the cyclic complex. 
The homology of this complex is periodic cyclic homology, denoted $HC^{per}_{\bullet}(A)$.

If $r \in M_n(A)$ is invertible the following formula defines a cycle in the periodic cyclic complex:
\begin{equation}\label{cyclicchernformula}
\Ch(r):= -\frac{1}{2 \pi i} \sum_{l=0}^{\infty}(-1)^l\, l!\, u^l \tr (r^{-1}\otimes r)^{\otimes (l+1)} \in CC^{per}_1(A)
\end{equation}
where $\tr  \colon  (A \otimes M_n(\mathbb{C}))^{\otimes k} \to A^{\otimes k}$ is the map given by
\begin{equation*}
\tr (a_0\otimes m_0)\otimes (a_1\otimes m_1) \otimes \ldots (a_k\otimes m_k)= (\tr m_0m_1\ldots m_k) a_0  \otimes a_1 \otimes \ldots a_k
\end{equation*}
In the case of interest to us,  $A$ will be a Fr\'echet algebra.
In that case we will use a projective tensor product to define $C_l(A)=A \otimes(A/(\mathbb{C} \cdot 1))^{\otimes l}$.
One can define the topological $K$-theory of a Fr\'echet algebra as $K_1(A):=\pi_0(GL(A))$. 
With these definitions, \eqref{cyclicchernformula} defines  the (odd) Chern character homomorphism
\begin{equation}\label{CyclicChern}
\Ch  \colon  K_1(A) \to HC^{per}_1(A)
\end{equation}
from  topological $K$-theory to  periodic cyclic homology.

\subsection{Cycles and characters}\label{section cycles}
As in \citelist{\cite{Co85} \cite{Co94}},  a {\em cycle} of dimension $n$ is a triple $(\Omega, d, \int)$ where $\Omega=\bigoplus_{j=0}^n\Omega^j$ is a graded complex algebra, $d \colon \Omega\to\Omega$ is a graded derivation of degree 1 such that $d^2=0$, and $\int \colon \Omega^n\to \CC$ is a  graded trace on $\Omega$ such that $\int d\beta=0$ for all $\beta\in \Omega^{n-1}$.
If $A$ is a complex algebra, then a cycle over $A$ is given by a cycle $(\Omega, d, \int)$ together with  a homomorphism $\rho \colon  A\to \Omega^0$.
The {\em character} of such a cycle is the $(n+1)$-linear map
\[ (a_0,a_1,\dots,a_n)\mapsto \int \rho(a_0)\,d(\rho(a_1))\,d(\rho(a_2))\,\cdots\, d(\rho(a_n))\qquad a_j\in A\]
Elements in cyclic cohomology $HC^\bullet(A)$ can be represented as cycles over $A$. 

In \cites{gor1, gor2} the first author considered the notion of {\em generalized cycle} of degree $n$ given by a quadruple $(\Omega^\bullet, \nabla, \theta, \int)$ where
\begin{itemize}
    \item $\Omega^\bullet$ is a graded algebra
    \item $\nabla \colon \Omega^\bullet \to \Omega^{\bullet+1} $ is a graded derivation 
    \item $\theta \in \Omega^2$ is such that 
    $ \nabla^2(\beta)=\theta\beta-\beta\theta  \text{ for } \beta\in \Omega \text{ and } \nabla(\theta)=0$. 
    In other words, $(\Omega^\bullet, \nabla, \theta)$ is a curved differential graded algebra
    \item Finally $\int  \colon  \Omega^n\to \CC$ is a graded trace such that  $\int \nabla\beta=0$ for $\beta\in \Omega$
\end{itemize}
With every generalized cycle explicit formulas of \cites{gor1, gor2}  associate its character, which is a cocycle in the cyclic $b$, $B$ bicomplex. This construction extends the construction of the character of a cycle.




These formulas apply in our context as follows.
The triple $(\Omega^\bullet \Alg_H,\con,\curv)$ is a curved dga.
If we let 
\[ \stackinset{c}{}{c}{}{-\mkern4mu}\int \beta := \int_M \tau(\beta) \wedge 
\hat{A}(M)\qquad \beta\in \Omega^\bullet\Alg_H\]
then the quadruple 
\[(\Omega^\bullet\Alg_H,
\con,\curv,\displaystyle\stackinset{c}{}{c}{}{-\mkern4mu}\int)\]
is a generalized cycle (or, more correctly, a finite sum of generalized cycles),
thanks to the results of  sections  \ref{section_curvature} and \ref{section_trace}.
With the inclusion $\Symb\cong \Alg_H^0\subset \Alg_H$,
this defines a generalized cycle over the algebra of principal Heisenberg symbols $\Symb$.
The formalism of \cites{gor1, gor2} then gives us a periodic cyclic cocycle in $ CC^1_{per}(\Symb)$ which is continuous on the Fr\'echet algebra $\Symb$ (see Proposition \ref{tau_continuous} and the Appendix for a brief discussion of continuity).

Therefore we can pair this  cyclic cocycle  with topological $K$-theory, and we obtain a map
\[ K_1(\Symb)\stackrel{\Ch}{\lra} HC^{per}_1(\Symb)\to \CC\]  
We shall prove that this is the index map for the Heisenberg calculus.

\subsection{The character map} \label{section character map}
Let $M$ be a closed smooth manifold, and let $A$ be a unital complex algebra equipped with an algebra homomorphism $C^\infty(M)\to Z(A)$, where $Z(A)$ is  the center of $A$. Denote
\[ \Omega^j(A) :=  \Omega^j(M)\otimes_{C^\infty(M)} A \qquad \Omega^\bullet(A):= \bigoplus_{j} \Omega^j(A)\]
Assume that the following data is given:
\begin{itemize}
    \item A connection $\nabla  \colon  A \to \Omega^1(A)$ with $\nabla 1=0$ and
    \begin{equation*}
    \nabla(fa) = df\otimes a+ f \nabla(a) \qquad f\in C^\infty(M),\,a\in A
    \end{equation*}
    which acts as a derivation of $A$:
    \begin{equation}\label{defN}
    \nabla(ab) = \nabla(a) b+ a \nabla(b) \qquad a, b \in A
    \end{equation}
    $\nabla$ extends to a graded derivation of $\Omega^\bullet(A)$ of degree 1 in the standard way.
    \item An element $\theta \in \Omega^2 (A)$ such that 
    \begin{equation}\label{defTe}
    \nabla^2(a) =[\theta, a]\qquad  \nabla(\theta)=0
    \end{equation}
    \item A $C^\infty(M)$-linear trace $\tau  \colon  A \to C^\infty(M) $ satisfying 
    \begin{equation*}\tau(ab) = \tau(ba)
    \end{equation*}
    $\tau$ extends to a map $\Omega^\bullet(A)  \to \Omega^\bullet(M)$, and we assume that 
    \begin{equation}\label{defTa}
    \tau(\nabla a) = d \tau(a)
    \end{equation}
\end{itemize}
If  $\mathcal{C}$ is a closed de Rham current $\mathcal{C}$ on $M$, then the quadruple $(A,\nabla,\theta,\mathcal{C}\circ \tau)$ is a generalized cycle, as above.
However, it is unnecessary to bring in de Rham currents at this point.
Instead, one can   construct the following morphism of complexes.
\begin{theorem}[\cites{gor1, gor2}]\label{character map}
 The map 
\begin{equation}\label{defT}
    T  \colon  CC^{per}_{\bullet}(A) \to \left( \Omega^\bullet(X)[u^{-1}, u], ud \right)
\end{equation} 
given by
\begin{multline*}
T(a_0\otimes a_1 \ldots \otimes a_k) = 
 \sum \limits_{i_0, \ldots, i_k \ge 0}\frac{(-1)^{i_0+\ldots +i_k}}{(i_0+i_1+\ldots i_k+k)!} \tau \left(  a_0 (u\theta)^{i_0} \nabla(
a_1)(u\theta)^{i_1}\dots \nabla(a_k) (u\theta)^{i_k}\right)
\end{multline*}
is a morphism of complexes.
\end{theorem}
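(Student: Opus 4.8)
The plan is to put $T$ into a Duhamel (``heat-kernel'') form and then verify the chain-map identity $T\circ(b+uB)=ud\circ T$ by the manipulations familiar from the proof that the JLO cochain is a $(b,B)$-cocycle, with $\tau$ playing the role of the fibrewise trace and $u\theta$ the role of the curvature. First I would record the elementary reductions. Since $\dim M<\infty$ we have $\Omega^j(M)=0$ for $j\gg0$, so every sum over $(i_0,\dots,i_k)$ in the formula for $T$ is finite, the term with $(u\theta)^{i_0}\cdots(u\theta)^{i_k}$ is $u^{I}$ times a form of de Rham degree $2I+k$ (where $I=i_0+\cdots+i_k$), hence of total degree $k$ in $(\Omega^\bullet(M)[u^{-1},u],ud)$; thus $T$ preserves degree and, since a chain in $C_l(A)$ with $l>\dim M$ maps to zero, the $u$-linear extension to $CC^{per}_\bullet(A)$ is well defined. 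Because $\nabla1=0$, $T$ vanishes whenever some $a_j$ with $j\ge1$ lies in $\CC\cdot1$, so it is well defined on $C_l(A)=A\otimes(A/\CC\cdot1)^{\otimes l}$. It then remains to prove the chain-map identity.

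The key rewriting uses the Dirichlet integral $\int_{\Delta_k}s_0^{i_0}\cdots s_k^{i_k}\,ds=\frac{i_0!\cdots i_k!}{(i_0+\cdots+i_k+k)!}$ over the standard $k$-simplex $\Delta_k=\{s_j\ge0,\ \sum_j s_j=1\}$: the coefficient $\frac{(-1)^{I}}{(I+k)!}$ equals $\prod_j\frac1{i_j!}\int_{\Delta_k}(-s_j)^{i_j}\,ds$, so summing the exponential-type series yields
\[
T(a_0\otimes\cdots\otimes a_k)=\int_{\Delta_k}\tau\!\left(a_0\,e^{-s_0 u\theta}\,\nabla(a_1)\,e^{-s_1 u\theta}\cdots\nabla(a_k)\,e^{-s_k u\theta}\right)ds,\qquad e^{-s u\theta}:=\sum_{i\ge0}\tfrac{(-s)^i}{i!}(u\theta)^i .
\]
Now expand $ud\,T(\alpha)$. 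By the hypothesis $\tau(\nabla\eta)=d\tau(\eta)$ this equals $u\int_{\Delta_k}\tau\bigl(\nabla(\text{JLO word})\bigr)ds$. Using that $\nabla$ is a graded derivation with $\nabla(u\theta)=0$ and $\nabla^2(a_j)=[\theta,a_j]$, distributing $\nabla$ produces exactly (i) one ``front'' term with $a_0$ replaced by $\nabla(a_0)$, and (ii) for each $j=1,\dots,k$ a term with $\nabla(a_j)$ replaced by $[\theta,a_j]=\theta a_j-a_j\theta$ (carrying the sign $(-1)^{j-1}$ from passing the odd factors $\nabla(a_1),\dots,\nabla(a_{j-1})$). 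In each type-(ii) term, Duhamel's identity $\partial_s e^{-s u\theta}=-u\theta\,e^{-s u\theta}$ turns the two summands of $u[\theta,a_j]=[u\theta,a_j]$ into $s$-derivatives of the integrand (one in $s_{j-1}$, one in $s_j$), i.e.\ into $\partial_{t_j}$ of the integrand in the ordered coordinates $0\le t_1\le\cdots\le t_k\le1$; Stokes' theorem on $\Delta_k$ converts these into integrals over the boundary facets $\{s_m=0\}$. The interior facets $\{s_j=0\}$, $1\le j\le k-1$, merge two neighbouring arguments, and reading the Leibniz rule $\nabla(a_ja_{j+1})=\nabla(a_j)a_{j+1}+a_j\nabla(a_{j+1})$ backwards reassembles them as the interior faces of $T(b\alpha)$; the facet producing the cyclic face of $b$ is obtained by moving the degree-$0$ bare factor $a_k$ to the front by graded cyclicity of $\tau$; and the front term (i), combined with the remaining boundary contribution and a resummation of the simplex integrals, reassembles — again using graded cyclicity of $\tau$ — as $u\,T(B\alpha)$. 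The combinatorial factors come out automatically from the simplex measure; this is already visible for $k=0$, where $\sum_{j_0+j_1=m}1=m+1$ collapses $\frac{m+1}{(m+1)!}$ to $\frac1{m!}$.

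The main obstacle is the bookkeeping in this last computation: tracking the Koszul signs when $\nabla$ and $\tau$ pass the odd factors $\nabla(a_j)$ (while $u\theta$ is even and passes freely), and, above all, the reconciliation of the front term with the Connes operator $B$, which requires the graded cyclicity of $\tau$ together with the resummation of Dirichlet integrals that promotes a $\Delta_k$-integral to a $\Delta_{k+1}$-integral. This is the well-known technical core of the JLO argument, carried out here in the curved, $\Omega^\bullet(M)$-linear setting; once the reductions above are in place, the identity — and hence the statement that $T$ is a morphism of complexes — follows, and one may appeal to the explicit formulas of \cites{gor1,gor2} for the precise signs and Chern-character normalizations.
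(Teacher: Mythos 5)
Your strategy is sound, and there is nothing in the paper to compare it against: Theorem \ref{character map} is stated here as an imported result, with the proof delegated entirely to \cites{gor1, gor2}. What you propose — the Dirichlet-integral rewriting of the coefficients $\tfrac{(-1)^I}{(I+k)!}$ into a Duhamel/JLO integral over $\Delta_k$, followed by the verification of $T\circ(b+uB)=ud\circ T$ via $\tau\circ\con=d\circ\tau$, the Leibniz rule with $\nabla^2(a)=[\theta,a]$ and $\nabla\theta=0$, Duhamel's identity, and Stokes on the simplex — is the standard and correct route for statements of this type, and your preliminary reductions (finiteness from $\dim M<\infty$, degree count $2I+k-2I=k$, well-definedness on $A\otimes(A/\mathbb{C}\cdot 1)^{\otimes k}$ from $\nabla 1=0$) are exactly the right ones. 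The identification of the boundary facets $\{s_j=0\}$ with the faces of $b$, and of the front term with $uT(B\alpha)$ via graded cyclicity of $\tau$ and the resummation $\sum_j r_j=1$ over insertion positions, is likewise the correct mechanism. The only incompleteness is the one you name yourself: the Koszul-sign bookkeeping and the matching of the signs $(-1)^{ki}$ in $B$ against the cyclic rotations of $\tau$ are asserted rather than carried out, so as written this is a (correct) proof outline rather than a complete proof; since the theorem is in any case a quotation from \cites{gor1, gor2}, that is an acceptable level of detail here.
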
 
\begin{remark}
A natural framework for such identities in cyclic cohomology is provided
by the theory of operations on cyclic cohomology of Nest and Tsygan, cf. \cites{nt1, nt2, nt3}.
\end{remark}

For a given algebra $A$ one can modify $\nabla$ and $\theta$ (without changing $\tau$) as follows. Let $\kappa \in \Omega^1(A)$. Set
\begin{equation}\label{perteta}
\nabla':= \nabla+[\kappa, \cdot], \ \theta':= \theta +\nabla \kappa +\kappa^2
\end{equation}
Then $\nabla', \theta', \tau$ satisfy all the conditions above and let $T'$ be the corresponding morphism of complexes. We then have the following.
\begin{proposition}[\cites{gor1, gor2}]\label{HomotopyT}
The morphisms $T$ and $T'$ are chain homotopic. 
\end{proposition}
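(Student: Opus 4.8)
The plan is to prove this by a transgression (Chern--Simons type) argument: interpolate between the two sets of data over $M$ by a one-parameter family, transport the character map of Theorem \ref{character map} to the product $M\times[0,1]$, and integrate the $dt$-component of the resulting morphism over the interval.

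Concretely, let $t$ be the coordinate on $[0,1]$, let $\hat A$ be the algebra of smooth functions $[0,1]\to A$, with the evident homomorphism $C^\infty(M\times[0,1])\to Z(\hat A)$, so that $\Omega^\bullet(\hat A)=\Omega^\bullet(M\times[0,1])\otimes_{C^\infty(M\times[0,1])}\hat A$. On $\Omega^\bullet(\hat A)$ I would introduce the data
\[
\hat\nabla := \nabla + dt\wedge\partial_t + t\,[\kappa,\,\cdot\,],\qquad
\hat\theta := \theta + t\,\nabla\kappa + t^2\kappa^2 + dt\wedge\kappa,
\]
together with the canonical extension $\hat\tau$ of $\tau$ and the exterior derivative $\hat d = d_M + dt\wedge\partial_t$. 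The first and central step is to verify that the quadruple $(\hat A,\hat\nabla,\hat\theta,\hat\tau)$ satisfies all the hypotheses of Theorem \ref{character map} with $M$ replaced by $M\times[0,1]$: that $\hat\nabla$ is a connection and graded derivation with $\hat\nabla 1=0$, that $\hat\nabla^2(a)=[\hat\theta,a]$ and $\hat\nabla\hat\theta=0$, and that $\hat\tau(\hat\nabla\eta)=\hat d\,\hat\tau(\eta)$. This is a routine but sign-sensitive computation: the curvature and Bianchi identities $\nabla^2=[\theta,\,\cdot\,]$, $\nabla\theta=0$ for the original data imply their analogues for $\hat\nabla,\hat\theta$ once one accounts for the cross terms between $dt\wedge\partial_t$ and $t[\kappa,\,\cdot\,]$, which produce exactly the extra summand $dt\wedge\kappa$ of $\hat\theta$ (the $t$-slices of $(\hat\nabla,\hat\theta)$ being the perturbations of \eqref{perteta}, cf.\ Lemma \ref{changeofconcurv}); the identity for $\hat\tau$ follows from $\tau(\nabla a)=d\tau(a)$, the vanishing of $\tau$ on graded commutators, and $\hat\tau(dt\wedge\partial_t\eta)=dt\wedge\partial_t\,\hat\tau(\eta)$.

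Granting this, Theorem \ref{character map} furnishes a morphism of complexes $\hat T\colon CC^{per}_\bullet(A)\to(\Omega^\bullet(M\times[0,1])[u^{-1},u],\,u\hat d)$. Decomposing a form on $M\times[0,1]$ as $\alpha(t)+dt\wedge\beta(t)$ with $\alpha,\beta$ smooth families of forms on $M$, write $\hat T = T_t + dt\wedge S_t$. Since $\hat\nabla,\hat\theta$ restrict at $t=0$ to $\nabla,\theta$ and at $t=1$ to $\nabla',\theta'$, we have $T_0=T$ and $T_1=T'$. Splitting the chain-map identity $u\hat d\,\hat T = \hat T\,(b+uB)$ into its $dt$-free and $dt$-parts yields that each $T_t$ is a chain map and, respectively, the transgression formula $\partial_t T_t = u^{-1}S_t\,(b+uB) + d_M S_t$. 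Integrating over $t\in[0,1]$ and setting $h := u^{-1}\int_0^1 S_t\,dt$ gives $T' - T = h\,(b+uB) + (ud)\,h$, which exhibits $T$ and $T'$ as chain homotopic.

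The main obstacle is the verification of the curved-dga and closed-graded-trace axioms for $(\hat\nabla,\hat\theta,\hat\tau)$ on $M\times[0,1]$, i.e.\ identifying $dt\wedge\kappa$ as precisely the right extra curvature term so that $\hat T$ is literally an instance of the character map of Theorem \ref{character map}; the rest is formal bookkeeping with the bigrading of $\Omega^\bullet(M\times[0,1])$. (Alternatively one can differentiate the explicit formula for $T_t$ in $t$ and reorganize the telescoping sum by hand, but extracting the form $u^{-1}S_t(b+uB)+d_M S_t$ directly is considerably more cumbersome.)
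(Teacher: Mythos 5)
The paper offers no proof of this proposition at all --- it is imported verbatim from \cites{gor1,gor2} --- so there is no in-text argument to compare yours against; what you have written is a correct, self-contained proof by the standard Chern--Simons transgression. Your verification of the curved-dga axioms on $M\times[0,1]$ is exactly right: with $\hat\theta=\theta+t\nabla\kappa+t^2\kappa^2+dt\wedge\kappa$, the cross terms between $dt\wedge\partial_t$ and $t[\kappa,\cdot]$ in $\hat\nabla^2$ produce precisely $[dt\wedge\kappa,\cdot]$, the term $dt\wedge\partial_t(\theta+t\nabla\kappa+t^2\kappa^2)$ cancels $\hat\nabla(dt\wedge\kappa)=-dt\wedge(\nabla\kappa+2t\kappa^2)$ in the Bianchi identity, and the trace identity follows from $\tau(\nabla a)=d\tau(a)$ together with the vanishing of the graded trace on graded commutators; the splitting of $u\hat d\,\hat T=\hat T(b+uB)$ into $dt$-free and $dt$-components then yields the homotopy $h=u^{-1}\int_0^1 S_t\,dt$ with the correct signs. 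Two minor points: Theorem \ref{character map} is stated for a closed manifold, and $M\times[0,1]$ has boundary --- nothing in the theorem uses closedness, but replacing $[0,1]$ by $\RR$ (with a cutoff in $t$) sidesteps even the appearance of an issue; and one should remark that for each fixed total form degree only finitely many terms of $T_t$, $S_t$ are nonzero and each is polynomial in $t$, so the integration over $t$ is unproblematic.
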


Assume now that $A$ is a Fr\' echet algebra, and that  $C^\infty(M)\to Z(A)$, $\nabla \colon  A\to \Omega^1(A)$, $\tau \colon  A\to C^\infty(M)$ are continuous. 
Then the morphism $T$ from Theorem \ref{character map} is continuous and we can define the character map
\begin{equation*}
\chi  \colon  K_1(A) \to H^{odd}(M)
\end{equation*}
as the composition
\begin{equation*}
  K_1(A) \overset{\Ch}{\longrightarrow} HC_1(A) \overset{T}{\longrightarrow} H^{odd}(M)[u^{-1}, u] \overset{R}{ \longrightarrow} H^{odd}(M)
\end{equation*}

\begin{itemize}
    \item $\Ch$ is the Chern character in cyclic homology from \eqref{CyclicChern}
    \item $T$ is the map in (co)homology determined by the morphism \eqref{defT}
    \item $R$  is $\CC$-linear map defined by  ``evaluation'' of the formal variable $u$,
    \[ R(u) = \frac{1}{2\pi i}\]
    in other words, $R(\sum u^q \alpha_{q}) = \sum (2 \pi i)^{-q}\alpha_{q}$, $\alpha_{q} \in H^\bullet(M)$
\end{itemize}

Explicitly, for a class $[r]$ in $K_1(A)$ represented by an invertible $r \in M_n(A)$, its image under $\chi$ is represented by the differential form

\begin{equation}\label{defchi}
\chi(r) = \sum \limits_{l\ge 0} \sum \limits_{i_0, \ldots, i_{2l+1} \ge 0} \left(\frac{-1}{2 \pi i}\right)^{I+l+1} \frac{l!}{(I+2l+1)!} \tau \left(\tr  r^{-1} \theta^{i_0} \nabla(
r)\theta^{i_1}\nabla(r^{-1}) \dots \nabla(r) \theta^{i_{2l+1}}\right)
\end{equation}
where
\begin{equation*}
I=i_0+i_1+\ldots +i_{2l+1}
\end{equation*}
and $\tr  \colon  M_n(\mathbb{C}) \otimes \Omega^\bullet(A) \to \Omega^\bullet(A)$ is the matrix trace.  
By Proposition \ref{HomotopyT} the cohomology class of $\chi(r)$  does not change if $\nabla$, $\theta$ are replaced by $\nabla'$, $\theta'$ as in \eqref{perteta}.

\begin{remark}
For a closed de Rham current $\mathcal{C}$ on $M$, the quadruple $(A,\nabla,\theta,\mathcal{C}\circ \tau)$ is a generalized cycle.
The character of $(A,\nabla,\theta,\mathcal{C}\circ \tau)$ (as defined by Connes)
determines a map $K_1(A)\to \CC$.
This map is equivalent to the composition $\mathcal{C}\circ \chi$
(See \cites{gor1, gor2}).
\end{remark}

\begin{example}
Let $E$ be a vector bundle over $M$, and let $A= \Gamma(M; \End(E))$.  
A choice of connection $\nabla$ on $E$  defines a derivation on $A$ as in \eqref{defN}. 
The curvature of $\nabla$ is an element $\theta \in \Omega^2(M; \End(E))$, which satisfies  equations \eqref{defTe}. Finally, the fiberwise trace $\tr \colon  \End(E_p)\to\CC$ determines a trace $\tau \colon  A \to C^\infty(X)$ 
that satisfies \eqref{defTa}. For an invertible element $r \in A$ we therefore obtain a differential form $\chi(r)$ defined by \eqref{defchi},  and a cohomology class $[\chi(r)] \in H^{odd}(M)$.
\begin{proposition}\label{chern}
Assume that $M$ is compact, and let  $[r]$ denote the class of  $r$ in $K_1(A) \cong K^1(M)$.  Then
\begin{equation*}
[\chi(r)] = \Ch[r] \in H^{odd}(M).
\end{equation*}
\end{proposition}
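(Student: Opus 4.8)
The plan is to recognize the right-hand side of the claimed identity as the standard Chern--Weil/Chern--Simons representative of the odd Chern character of $[r]\in K^1(M)$, and to match it against formula \eqref{defchi} via the suspension trick used by Epstein--Melrose (and reviewed in the introduction). Concretely, I would first recall that for an invertible $r\in A=\Gamma(M;\End(E))$ the cyclic Chern character $\Ch(r)\in HC_1^{per}(A)$ from \eqref{cyclicchernformula} is, after applying the fiberwise trace, the class whose pairing with cyclic cocycles computes the index/winding data of $r$; and that composing $\Ch$ with the morphism $T$ of Theorem \ref{character map} and the evaluation $R$ ($u\mapsto 1/2\pi i$) is, by construction in \cites{gor1,gor2}, exactly the map $\chi$ whose explicit form is \eqref{defchi}. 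So the content of the proposition is the purely finite-dimensional, commutative-geometry fact that \emph{this} map $\chi\colon K_1(\Gamma(M;\End E))\to H^{odd}(M)$ agrees with the topological odd Chern character under $K_1(\Gamma(M;\End E))\cong K^1(M)$.

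The cleanest route is to reduce to the universal case. First I would note both sides are additive and natural under pullback, and that every class in $K^1(M)$ is pulled back from a map $M\to U(N)$ for $N$ large; so it suffices to treat $E$ trivial, $r\colon M\to GL_N(\CC)$, and then, by homotopy invariance of both sides (for $\chi$ this is Proposition \ref{HomotopyT}, which lets us change $\nabla$ and $\theta$ at will), to take $\nabla$ to be the \emph{trivial} connection $d$ on $M\times\CC^N$. With $\nabla=d$ we have $\theta=0$, so in \eqref{defchi} only the $i_0=\cdots=i_{2l+1}=0$ terms survive and
\[
\chi(r)=\sum_{l\ge 0}\left(\frac{-1}{2\pi i}\right)^{l+1}\frac{l!}{(2l+1)!}\,\tr\big(r^{-1}dr\,(r^{-1}dr)\cdots(r^{-1}dr)\big)
=-\sum_{l\ge 0}\frac{1}{(2\pi i)^{l+1}}\frac{l!}{(2l+1)!}\,\tr\,(r^{-1}dr)^{2l+1},
\]
using $\nabla(r^{-1})=-r^{-1}(dr)r^{-1}$ and cyclicity of the matrix trace to collapse the alternating product of $\nabla r$ and $\nabla r^{-1}$ into a power of $r^{-1}dr$. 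This is precisely the normalization of $\Ch(f)$ recorded in the commented-out introduction (the Boutet de Monvel formula), i.e. the standard transgression form for the odd Chern character, whose de Rham class is $\Ch[r]\in H^{odd}(M)$. That last identification — that $-\sum_l \frac{1}{(2\pi i)^{l+1}}\frac{l!}{(2l+1)!}\tr(r^{-1}dr)^{2l+1}$ represents $\Ch[r]$ — is classical (it is the odd analogue of the Chern--Weil formula, obtainable by suspending $r$ to a bundle on $M\times S^1$ and integrating the even Chern character over the $S^1$ factor), so I would cite it rather than reprove it.

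The one step that needs genuine care, and which I expect to be the main obstacle, is justifying the reduction to $\nabla=d$: Proposition \ref{HomotopyT} only allows modifications $\nabla'=\nabla+[\kappa,\cdot]$, $\theta'=\theta+\nabla\kappa+\kappa^2$ by an \emph{inner} perturbation $\kappa\in\Omega^1(A)$, whereas passing from an arbitrary connection on $E$ to the trivial one is an inner perturbation of $\nabla$ only when $E$ is trivialized. Hence the honest argument is: (i) pull back to the case $E=M\times\CC^N$ trivial via classifying-space naturality, checking that $\chi$ commutes with pullback (immediate from the formula \eqref{defchi}, since $f^*$ commutes with $d$, the chosen connection pulls back, and $\tau$ is fiberwise trace); (ii) on the trivial bundle, any two connections differ by $\kappa\in\Omega^1(M;M_N(\CC))=\Omega^1(A)$, so Proposition \ref{HomotopyT} applies and we may take $\nabla=d$, $\theta=0$; (iii) evaluate as above and invoke the classical identification of the transgression form with $\Ch[r]$. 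A small additional point to verify in passing is compatibility of the evaluation map $R$ with the powers of $u$ bookkeeping — i.e. that the term with $l$ factors $u^l$ indeed gets multiplied by $(2\pi i)^{-l}$ and that together with the overall $-\tfrac{1}{2\pi i}$ from $\Ch$ this produces the exponent $l+1$ in \eqref{defchi} — but this is routine once the degrees are tracked.
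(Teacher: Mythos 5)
Your proposal is correct and follows essentially the same route as the paper: reduce to a trivial bundle, invoke Proposition \ref{HomotopyT} to pass to $\nabla=d$, $\theta=0$, collapse the alternating product to $-\sum_{l\ge 0}(2\pi i)^{-l-1}\tfrac{l!}{(2l+1)!}\tr(r^{-1}dr)^{2l+1}$, and cite the classical identification of this transgression form with the odd Chern character. The only cosmetic difference is that the paper carries out your step (i) by explicit stabilization --- choosing $E'$ with $E\oplus E'$ trivial and checking $\chi(r\oplus\id)=\chi(r)$ for the direct-sum connection --- which is the precise form of the additivity you invoke, rather than appealing to classifying-space naturality.
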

\begin{proof}
Consider first the case when the bundle $E$ is trivial, with an arbitrary connection $\nabla$ with curvature $\theta$. In this case we can replace $\nabla$, $\theta$ by de Rham differential $d$ and $0$ respectively without changing the cohomology class of $\chi(r)$. With this choice of connection 
\begin{multline*}
    \chi(r) = \sum \limits_{l\ge 0}   \left(\frac{-1}{2 \pi i}\right)^{l+1} \frac{l!}{(2l+1)!} \tr  r^{-1}  dr d(r^{-1})\ldots d(r^{-1})  dr =\\
-\sum \limits_{l\ge 0}   \frac{1}{(2 \pi i)^{l+1}} \frac{l!}{(2l+1)!} \tr  (r^{-1}  dr)^{2l+1} 
\end{multline*}
and the last expression is the well-known formula for the Chern character. 
This proves the result for the trivial bundle.
Now for a general bundle $E$ find $E'$ such that $E\oplus E'$ is trivializable. Choose any connection $\nabla'$ on $E'$ and endow  $E\oplus E'$ with the connection $\nabla \oplus \nabla'$. Then $\chi(r \oplus \id) =\chi(r)$. On the other hand $[\chi(r \oplus \id)] =\Ch([r \oplus \id]) = \Ch([r])$, which complets the proof in the general case.

\end{proof}
\end{example}

\subsection{The character map for $\Symb$}\label{section character2}

All the data needed to construct the character map is present for the algebra $\Symb$ of principal Heisenberg symbol of order zero.
We have a curved dga $(\Omega^\bullet(\Alg_H),\con, \curv)$,
a closed graded trace $\tau \colon \Omega^\bullet(\Alg_H)\to \Omega^\bullet(M)$,
and a homomorphism (an inclusion) $\Symb\to \Alg_H$.
We summarize our conclusion in the following theorem.

\begin{theorem}\label{summarychi}
Let $\sigma$ be an invertible element in $M_r(\Symb)$.
Choose a symplectic connection $\nabla$ on $H$ and let $\con$ and $\curv$ be as in \eqref{defcon}, \eqref{deftheta}.
Then the formula
 \begin{equation}
\chi(\sigma) = \sum \limits_{l\ge 0} \sum \limits_{i_0, \ldots, i_{2l+1} \ge 0} \left(\frac{-1}{2 \pi i}\right)^{I+l+1} \frac{l!}{(I+2l+1)!} \tau \left(\tr  \sigma^{-1} \curv^{i_0} \con(\sigma)\curv^{i_1}\con(\sigma^{-1}) \dots \con(\sigma) \curv^{i_{2l+1}}\right),
\end{equation}
\begin{equation*}
I=i_0+i_1+\ldots i_{2l+1}
\end{equation*}
defines a homomorphism \[\chi  \colon  K_1(\Symb) \to H^{odd}(M)\] This homomorphism is independent of the choice of connection $\nabla$.
\end{theorem}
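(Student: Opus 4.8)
The plan is to assemble the pieces that have already been established into an instance of the general machinery of Section \ref{section character map}, and then invoke the results quoted there. First I would observe that all the hypotheses of Theorem \ref{character map} are in place for the algebra $A = \Alg_H$: the homomorphism $C^\infty(M) \to Z(\Alg_H)$ is the inclusion of scalars, the connection is $\con$ of \eqref{defcon}, the curvature element is $\curv \in \Omega^2(\Alg_H)$ of \eqref{deftheta}, and the trace is $\tau \colon \Alg_H \to C^\infty(M)$ constructed in Section \ref{section_trace}. The three structural identities needed are exactly those proved above: $\con^2(a) = [\curv, a]$ and $\con(\curv) = 0$ (Lemma in Section \ref{section_curvature}), and $\tau(\con(a)) = d\tau(a)$ together with the trace property $\tau(ab) = \tau(ba)$ (Section \ref{section_trace}). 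Continuity of $\con$ and $\tau$ on the relevant Fr\'echet/LF topologies follows from Proposition \ref{tau_continuous} and the remarks following it. Thus Theorem \ref{character map} yields a continuous morphism of complexes $T \colon CC^{per}_\bullet(\Alg_H) \to (\Omega^\bullet(M)[u^{-1},u], ud)$, and precomposing with the Chern character \eqref{CyclicChern} for the subalgebra $\Symb = \Alg_H^0 \subset \Alg_H$ and postcomposing with the evaluation map $R$ of Section \ref{section character map} gives the homomorphism
\[
\chi \colon K_1(\Symb) \to H^{odd}(M).
\]
Unwinding the definitions, the explicit formula \eqref{defchi} becomes precisely the stated formula for $\chi(\sigma)$, since the $\hat A(M)$ factor is absent here (the $\hat A(M)$-twisted version belongs to the index theorem, not to this purely cohomological statement). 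One subtlety to address is that $(\Omega^\bullet(\Alg_H), \con, \curv)$ is, strictly speaking, a \emph{finite sum} of curved dga's indexed by the degrees of the graded pieces $\Alg_H^{2l}$; but since $\tau$ and $\con$ respect this grading and each summand satisfies the hypotheses separately, the morphism $T$ is well defined on the direct limit, and $\chi$ is too.

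For the independence of $\chi$ from the choice of symplectic connection, I would argue as follows. Two symplectic connections on $H$ differ by $\kappa \in \Omega^1(M, \mathfrak{sp}(H))$, and by Lemma \ref{changeofconcurv} the associated connection and curvature on $\Alg_H$ change by $\con' = \con + [\boldsymbol\kappa, \cdot\,]$ and $\curv' = \curv + \con(\boldsymbol\kappa) + \boldsymbol\kappa^2$, where $\boldsymbol\kappa = \nu(\kappa) \in \Omega^1(\Alg_H)$. This is exactly the transformation \eqref{perteta} with the role of $\kappa$ there played by $\boldsymbol\kappa$. Proposition \ref{HomotopyT} then asserts that the two morphisms $T$ and $T'$ are chain homotopic, hence induce the same map on homology; since the homotopy can be taken continuous (the perturbation $\boldsymbol\kappa$ is a fixed element of $\Omega^1(\Alg_H)$), the induced maps on the $K$-theory pairing agree. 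Therefore $[\chi(\sigma)]$ is unchanged, and since the space of symplectic connections is affine hence connected, $\chi$ is independent of the choice.

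The main obstacle is not any single deep step but the bookkeeping required to match our geometric data to the abstract framework — in particular checking that the grading-by-$\Alg_H^{2l}$ issue does not spoil continuity of $T$, and verifying that the homotopy in Proposition \ref{HomotopyT} survives passage to the LF-topology on $\Alg_H$. I expect the verification that $\boldsymbol\kappa = \nu(\kappa)$ genuinely lies in $\Omega^1(\Alg_H)$ (so that \eqref{perteta} applies verbatim) to be the one place where a small computation is unavoidable, but this is immediate from the definition of $\nu$ in \eqref{Iso}, since $\nu$ lands in degree-$2$ polynomial symbols which are sections of $\mcA_H$. Everything else is a direct citation of Theorem \ref{character map}, Proposition \ref{HomotopyT}, and the structural lemmas of Sections \ref{section_curvature}--\ref{section_trace}.
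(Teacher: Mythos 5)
Your proposal is correct and follows essentially the same route as the paper: the theorem is a direct instantiation of the general character-map machinery (Theorem \ref{character map}, the Chern character \eqref{CyclicChern}, and the evaluation $R$) applied to the curved dga $(\Omega^\bullet(\Alg_H),\con,\curv)$ with the trace $\tau$, and the independence of the connection is obtained exactly as you say, by combining Lemma \ref{changeofconcurv} with Proposition \ref{HomotopyT}. The paper's own proof consists of precisely this one-sentence reduction, so your additional remarks on continuity and the grading of $\Alg_H$ are a harmless elaboration rather than a divergence.
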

The independence of $\nabla$ is immediate from Proposition \ref{HomotopyT}, the conditions of which are satisfied according to Lemma \ref{changeofconcurv}.

\section{Toeplitz operators}\label{section_Toeplitz}

A  Toeplitz operator is an order zero pseudodifferential operator in the Heisenberg calculus.
We calculate the character $\chi(\sigma)$  of the symbol of  a Toeplitz operator $T_f$,
\[ \chi(\sigma_H(T_f)) =  \Ch(f)\wedge \exp\left(\frac{1}{2}c_1(H^{1,0})\right)\]
This calculation establishes the equivalence of our index formula with that of Boutet de Monvel in the case of Toeplitz operators.

\subsection{Boutet de Monvel's theorem}

Let $\tilde{N}$ be a complex analytic manifold of complex dimension $n+1$, and $N\subset \tilde{N}$ a relatively compact open submanifold with  smooth boundary $M=\partial N$ of real dimension $2n+1$.
Let $H^{1,0}\subset TM\otimes \CC$ be the complex vector bundle of holomorphic tangent vectors on $\tilde{N}$ that are tangent to $M$,
\[H^{1,0} := T^{1,0}\tilde{N}|M\cap (TM\otimes \CC)\] 
Choose a defining function of the boundary $r \colon N\to \RR$ with $N=r^{-1}((-\infty,0))$, $M=r^{-1}(0)$, and $dr\ne 0$ on  $M$.
The {\em Levi form} is the hermitian form on the fibers of $H^{1,0}$ defined by
\[ \ang{v,w} := \partial\bar{\partial}r(v,\bar{w})\qquad v,w\in H^{1,0}_p\]
The boundary of $N$  is called  {\em strictly pseudoconvex} if the Levi form is strictly positive.
This definition is independent of the choice of $r$. Strict pseudoconvexity is biholomorphically invariant.
A domain $N\subset \CC^{n+1}$ with a smooth boundary that is strictly convex in the Euclidean sense is strictly pseudoconvex.
In $\CC^{n+1}$, a strictly pseudoconvex domain is the same as a domain of holomorphy.

Let $\alpha$ be the restriction of the $(1,0)$-form $-i\partial r$ to  $M$.
Because $dr(v)=\partial\gamma(v)+\bar{\partial}\gamma(v)=0$ for $v\in TM\otimes \CC$, $\alpha$ is a real 1-form.
A holomorphic vector $v\in T^{1,0}\tilde{N}|M$ is tangent to $M$ if $dr(v) = \partial r(v) =0$.
Since  $\bar{\partial}r(v)=0$ this is equivalent to $\alpha(v+\bar{v})=0$.
We have a canonical isomorphism of {\em real} vector bundles
$T^{1,0}\tilde{N}\cong T\tilde{N} \colon v\mapsto v+\bar{v}$.
Then $H^{1,0}$ is identified with the real vector bundle $H\subset TM$
of (real) tangent vectors that are annihilated by $\alpha$.
Note that $d\alpha$ is the restriction of $i\partial\bar{\partial}\gamma$ to $M$. Strict positivity of the Levi form implies that $d\alpha$ is nondegenerate (i.e. symplectic) when restricted to $H$.
Thus, $\alpha$ is a contact form on $M$.

The Hardy space  $H^2(M)$ is the space of $L^2$-functions on $M$ that extend to a holomorphic function on $N$.
 The Szeg\"o projection $S$ is the orthogonal projection 
 \[ S \colon L^2(M)\to H^2(M)\]
For a continuous map $f \colon M\to \mathrm{GL}(r,\CC)$, let $\mathcal{M}_f$ be the corresponding multiplication operator 
on  $L^2(M)\otimes \CC^r$.
The Toeplitz operator $T_f$ is the composition of $\mathcal{M}_f$
with $S\otimes I_r$,
\[ T_f =(S\otimes I_r)\mathcal{M}_f  \colon  H^2(M)\otimes \CC^r\to H^2(M)\otimes \CC^r\]
$T_f$ is a bounded Fredholm operator.
\begin{theorem}\label{BdM}[Boutet de Monvel \cite{Bo79}]
\[  \ind  T_f = \int_M \Ch(f)\wedge \Td(H^{1,0})\]
\end{theorem}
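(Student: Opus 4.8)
The statement is the classical theorem of Boutet de Monvel, so what I would give is a plan recapitulating the microlocal strategy behind it rather than anything new; the idea is to pass from the Toeplitz operator to an honest elliptic operator whose index is accessible to Atiyah--Singer, and then to carry out a pushforward computation. First I would reduce to smooth $f$, using that the index is homotopy invariant and that the relevant data is the class of $f$ in $K^1(M)$. The crucial first input is the Boutet de Monvel--Sj\"ostrand description of the Szeg\"o projection on a strictly pseudoconvex boundary: $S$ is a Fourier integral operator with complex phase whose canonical relation is the diagonal of $\Sigma\times\Sigma$, where
\[ \Sigma=\{(p,t\alpha_p)\mid p\in M,\ t>0\}\subset T^*M\setminus 0 \]
is the symplectic cone generated by the contact form. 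It follows that $T_f=(S\otimes I_r)\mathcal{M}_f(S\otimes I_r)$ is a generalized Toeplitz operator in the sense of Boutet de Monvel--Guillemin, with ``symbol'' the invertible section $f|_\Sigma$ of $\End(\CC^r)$ over $\Sigma$.

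Next, following Boutet de Monvel--Guillemin, I would realize $\ind T_f$ as the index of an elliptic operator $D_f$ on $M$. The geometric point is that $\Sigma$, the cone over the CR manifold $M$, carries a natural almost complex structure coming from strict pseudoconvexity, and its symplectic normal bundle $N\Sigma\subset T^*M$ inherits a compatible complex structure. One can then extend $f|_\Sigma$ to a full elliptic symbol on $T^*M$, supported in a conic neighborhood of $\Sigma$, by tensoring with the Koszul complex of $N\Sigma$ --- a $\bar\partial$-type resolution transverse to $\Sigma$, exactly as in the construction of the principal symbol of $\bar\partial_b$-type operators. This produces an elliptic $D_f$ with $\ind D_f=\ind T_f$.

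I would then apply the Atiyah--Singer index theorem,
\[ \ind D_f=\int_{T^*M}\pi^*\Td(TM\otimes\CC)\wedge\Ch(\sigma_{D_f}), \]
and push the integral down to $M$. Since $\sigma_{D_f}$ is built from $f|_\Sigma$ via the transverse Koszul resolution, its Chern character localizes on $\Sigma$; fiber integration over $N\Sigma$ contributes $\Td(N\Sigma)^{-1}$, which must be combined with the restriction of $\Td(TM\otimes\CC)$ to $\Sigma\simeq M$. Along $\Sigma$ one has $TM\otimes\CC\cong H^{1,0}\oplus H^{0,1}\oplus\underline{\CC}$, while $N\Sigma\cong H^{0,1}$ up to a trivial summand, so this bookkeeping collapses $\Td(H^{1,0})\,\Td(H^{0,1})\,\Td(H^{0,1})^{-1}$ to $\Td(H^{1,0})$, yielding $\ind T_f=\int_M\Ch(f)\wedge\Td(H^{1,0})$ with the orientation $\alpha(d\alpha)^n$.

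The main obstacle is exactly this last pushforward: turning the formal cancellation $\Td(H^{1,0})\,\Td(H^{0,1})\,\Td(H^{0,1})^{-1}=\Td(H^{1,0})$ into a rigorous computation requires keeping precise track of the almost complex structure on $\Sigma$, the identification of $N\Sigma$ with $H^{0,1}$, the orientation and excess-intersection factors from the cone geometry, and the Thom-isomorphism/Bott-periodicity normalization implicit in ``filling in'' the symbol transverse to $\Sigma$; this is where the microlocal model of $S$ and the Boutet de Monvel--Guillemin machinery carry the real weight. An alternative route --- a cobordism argument realizing $\ind T_f$ through a $\bar\partial$-Neumann problem on $N$ twisted by the clutching bundle of $f$ over $N\cup_M\bar N$ --- merely relocates the same computation rather than avoiding it.
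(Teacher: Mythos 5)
The paper does not prove Theorem \ref{BdM}: it is quoted from Boutet de Monvel's paper \cite{Bo79} and used, together with its contact-manifold generalization (Theorem \ref{EM_Toeplitz}), as the external input to which the authors reduce their own index formula through the $K$-theoretic argument of Sections \ref{section_Ktheory} and \ref{section_proof}. There is therefore no in-paper proof to compare yours against, and for the purposes of this paper a citation is all that is required.

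Taken on its own terms, your outline is a fair recapitulation of the classical microlocal strategy --- the Boutet de Monvel--Sj\"ostrand description of $S$ as an FIO with complex phase associated to the cone $\Sigma=\{t\alpha_p:t>0\}$, the extension of $f|_\Sigma$ to an elliptic symbol near $\Sigma$ by a Koszul complex transverse to $\Sigma$, and Atiyah--Singer followed by fiber integration --- and the Todd-class bookkeeping you indicate has the right shape. But as written it is a plan, not a proof: the two load-bearing steps are precisely the ones you flag and do not carry out. First, the equality $\ind D_f=\ind T_f$ is not automatic; it requires the Boutet de Monvel--Guillemin comparison of Toeplitz structures (conjugating the given Szeg\"o projector to the model projector attached to the extended symbol by an FIO, up to smoothing and finite-rank errors). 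Second, the pushforward requires a rigorous identification of $N\Sigma$ with $H^{0,1}$ as a complex symplectic bundle, a consistent orientation convention matching $\alpha(d\alpha)^n$, and the Thom-class normalization for the Koszul extension; the formal cancellation $\Td(H^{1,0})\Td(H^{0,1})\Td(H^{0,1})^{-1}$ is only the expected answer, not a derivation. If you intend a self-contained proof these must be supplied; otherwise the appropriate move is the one the paper makes, namely to cite \cite{Bo79}.
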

Here $M$ is oriented as the boundary of $N$ by the `outward normal first' convention: a frame of tangent vectors $(v_1,v_2,\dots, v_{2n+1})$ on $M$ is positively oriented if $({\bf n}, v_1,v_2,\dots, v_{2n+1})$ is positively oriented on $\tilde{N}$, where ${\bf n}$ is an outward pointing normal vector, $dr({\bf n})>0$.
Equivalently, the volume form $\alpha(d\alpha)^n$ on $M$ is positively oriented.

\subsection{The Szeg\"o projection in the Heisenberg calculus}
A reference for  the material in this section is \cite{FS74}.

With $M=\partial\tilde{N}$ as above,
let the tangential Cauchy-Riemann operator
\[\bar{\partial}_b \colon C^\infty(M)\to \Gamma((H^{0,1})^*)\]
be defined by $\ang{\bar{\partial}_bf,W}=\ang{df,W}$
for $W\in \Gamma(H^{0,1})$.
Note that $\bar{\partial}_bf=0$ if $W.f=0$ for all anti-holomorphic vector fields $W$ on $\tilde{N}$ that are tangent to $M$.
Such a function extends to a holomorphic function on $\tilde{N}$, at least  in a neighborhood of $M$.

If we impose a Hermitian metric on $M$, 
we can form the formal adjoint $\bar{\partial}_b^*$.
The {\em Kohn Laplacian} is the second order operator   $\Box_b:= \bar{\partial}_b^*\bar{\partial}_b$.
The projection onto the kernel of $\Box_b$ differs from the Szeg\"o projection $S$ by a finite rank smoothing operator.

The Szeg\"o projection $S$ is not a classical pseudodifferential operator,
but it is an order zero pseudodifferential operator in the Heisenberg calculus.
It has the same principal Heisenberg symbol as the projection onto the kernel of $\Box_b$.

Up to  order 1 terms in the Heisenberg calculus,
$\Box_b$ is equal to the operator
\[ \mathscr{L}_n = \Delta_H+inT\]
where $\Delta_H$ is a sublaplacian, and $T$ is the Reeb vector field.
The principal Heisenberg symbol (of order 2) of $\Delta_H$
is $\sigma_H(\Delta_H) = (Q,Q)$, where $Q(x,\xi)=\|x\|^2+\|\xi\|^2$ is the harmonic oscillator.
In the Heisenberg calculus, the Reeb vector field $T$
is an order 2 operator with principal symbol $\sigma_H(T) = (i,-i)$.
Thus, 
\[\sigma_H(\mathscr{L}_n)=(Q-n,Q+n)\]
The kernel of $Q-n$ is spanned by the vacuum vector of the harmonic oscillator. 
$Q+n$ is strictly positive, and has no kernel.
Thus,  the principal Heisenberg symbol  of the Szeg\"o projection is 
\[\sigma_H(S)=(s,0)\in \Symb\]
where $s\in \mcW(H_p^*,\omega_p)$ is the projection onto the vacuum of the harmonic oscillator.
From \eqref{Mehler} one derives the formula
\begin{equation}\label{szego_symbol}
s(v) := 2^ne^{-\|v\|^2}\qquad v\in H_p^*
\end{equation}

\subsection{Toeplitz operators on contact manifolds}
In \cite{EMxx}, Epstein and Melrose show how to generalize Boutet de Monvel's theorem to contact manifolds. 
If $M$ is a closed contact manifold with contact form $\alpha$, and $H=\Ker\alpha\subset TM$,
one can choose a complex structure $J \colon H\to H$, $J^2=-I$, that is compatible with the symplectic form $-d\alpha$.
Then \eqref{szego_symbol} defines a projection $s\in \Symb$.
Choose an arbitrary pseudodifferential operator of order zero $\tilde{S}\in \Psi_H^0(M)$ in the Heisenberg calculus
with symbol $s$. 
Since $s^2-s=0$, we see that $\tilde{S}^2-\tilde{S}\in \Psi^{-1}_H(M)$ is a compact operator.
Because the Heisenberg algebra $\Psi^0_H(M)$ is holomorphically closed,
one can then form a projection $S\in \Psi^{0}_H(M)$ with the same symbol as $\tilde{S}$.
Such a projection with symbol \eqref{szego_symbol} is called a generalized Szeg\"o projection. 

As shown in \cite{EMxx}, Boutet de Monvel's theorem generalizes to contact manifolds.
(See also \cite{BvE14}.)

\begin{theorem}\label{EM_Toeplitz}\cite{EMxx}
If $M$ is a closed contact manifold with generalized Szeg\"o projection $S$,
and 
\[T_f=(S\otimes I_r)\mathcal{M}_f+(I-S) \colon L^2(M)\otimes \CC^r\to L^2(M)\otimes \CC^r\] 
is the Toeplitz operator associated to $f \colon M\to \mathrm{GL}(r,\CC)$, then  
\[  \ind  T_f = \int_M \Ch(f)\wedge \Td(H^{1,0})\]
Here $M$ is oriented by the volume form $\alpha(d\alpha)^n$.
\end{theorem}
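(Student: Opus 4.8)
The plan is to deduce Theorem~\ref{EM_Toeplitz} from the Epstein--Melrose index formula for Hermite operators, equation~\eqref{hermite index}, which already subsumes the classical Boutet de Monvel theorem (Theorem~\ref{BdM}) as the strictly pseudoconvex case. First I would compute the principal Heisenberg symbol of $T_f$. Using $\sigma_H(S)=(s,0)$ with $s$ the Gaussian of~\eqref{szego_symbol}, $\sigma_H(I-S)=(1-s,1)$, and $\sigma_H(\mathcal{M}_f)=(f,f)$ (with $f$ constant along the fibers of $H^*$, hence central and commuting with the scalar $s$), one gets
\[ \sigma_H(T_f)=(s\# f+1-s,\;1)=(sf+1-s,\;1)\in M_r(\Symb).\]
Since $s$ is a projection commuting with $f$, the pair $(sf^{-1}+1-s,1)$ is a two-sided inverse, so $T_f$ is Heisenberg elliptic; moreover $\sigma_H(T_f)-1=(s(f-I_r),0)$ has entries that are Schwartz class in each fiber, so $T_f-I$ is a Hermite operator and \eqref{hermite index} applies.

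Next I would evaluate the two Chern characters in \eqref{hermite index} for $\sigma_+=sf+1-s$ and $\sigma_-=I_r$. The $\sigma_-$ contribution vanishes, because the compression $\sigma_-^{(N)}$ to $\bigoplus_{j=0}^N\Sym^jH^{0,1}\otimes\CC^r$ is the identity automorphism, which is trivial in $K^1(M)$, so $\Ch(\sigma_-)=0$ by \eqref{chern_chomology}. For $\sigma_+$, the key point is that in the Bargmann--Fock representation the Gaussian $s$ is the vacuum projection, i.e.\ the projection onto $\Sym^0H^{1,0}=\underline{\CC}$ (this is the same identification used above to find the symbol of the Szeg\"o projection, where $\ker(Q-n)$ is the one-dimensional vacuum line). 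Hence the compression $\sigma_+^{(N)}$ decomposes as $f$ acting on $\Sym^0H^{1,0}\otimes\CC^r=\underline{\CC^r}$ and the identity on $\bigoplus_{j=1}^N\Sym^jH^{1,0}\otimes\CC^r$, so $[\sigma_+^{(N)}]=[f]$ in $K^1(M)$ and $[\Ch(\sigma_+)]=[\Ch(f)]$ in $H^{\mathrm{odd}}(M)$. Substituting into \eqref{hermite index} gives $\ind T_f=\int_M\Ch(f)\wedge\Td(H^{1,0})$, and one checks that the orientation by $\alpha(d\alpha)^n$ is the one for which \eqref{hermite index} is stated.

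The step I expect to be most delicate is the identification of $s$ with the vacuum projection compatibly with the bundle structure: one must be sure that matching the explicit Gaussian \eqref{szego_symbol} to the projection onto $\Sym^0H^{1,0}\subset\Sym H^{1,0}$ is canonical, so that the compressed symbol represents exactly $[f]$ and not a twist of it by a line bundle. (If it produced a twist, the discrepancy would show up as an extra factor and spoil the match with Boutet de Monvel.) An alternative route, avoiding \eqref{hermite index} entirely, is to use the $K$-homology computation of \cite{BvE14}, or a contact-cobordism argument reducing a general contact manifold with generalized Szeg\"o projection to a disjoint union of strictly pseudoconvex boundaries where Theorem~\ref{BdM} applies; but the reduction to the Hermite case above is the most direct.
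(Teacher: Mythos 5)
Your proposal is correct, but it is worth noting that the paper itself offers no proof of this statement: Theorem~\ref{EM_Toeplitz} is imported wholesale from \cite{EMxx} (with a pointer to \cite{BvE14}) and is then used as the base case for the main index formula. What you do instead is derive it as a special case of the Epstein--Melrose Hermite formula \eqref{hermite index}, and the individual steps are all sound: the symbol computation $\sigma_H(T_f)=(sf+1-s,1)$ (using $\sigma_H(S)=(s,0)$, $\sigma_H(\mathcal{M}_f)=(f,f)$ and centrality of $f$ in each fiber), the explicit two-sided inverse, the Schwartz-class estimate showing $T_f-I$ is Hermite, the vanishing of the $\sigma_-$ contribution, and the identification $[\sigma_+^{(N)}]=[f]$ coming from the fact that $s$ is the vacuum projection onto $\Sym^0H^{1,0}=\underline{\CC}$, which is canonically trivial so no line-bundle twist appears (your own flagged worry resolves affirmatively). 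The one caveat you should make explicit is that this only relocates the burden of proof: \eqref{hermite index} is itself an Epstein--Melrose result that the paper states without proof, and in \cite{EMxx} the Hermite formula is obtained by reducing to the Toeplitz/Boutet de Monvel case, so as a free-standing argument yours is conditional and potentially circular at the level of the literature. Within the logical framework of the paper, however, it is a legitimate and more informative route than the bare citation, and your suggested alternatives (the $K$-homology computation of \cite{BvE14}, or a cobordism reduction to strictly pseudoconvex boundaries) are closer in spirit to how the result is actually established in the sources the paper relies on.
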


\subsection{Some calculations}

Let $V=\RR^{2n}$ with coordinates $(x,\xi)$ and symplectic form $\omega=\sum dx_j\wedge d\xi_j$.
We  identify 
\[ \RR^{2n}=\CC^n\quad (x_1,\dots, x_n,\xi_1,\dots, \xi_n) \mapsto (x_1+i\xi_1,\dots,x_n+i\xi_n)\]
so that $U(n)\subset \mathrm{Sp}(2n)$ and $\mathfrak{u}(n)\subset \mathfrak{sp}(2n)$.

\begin{lemma}\label{curv_calculation}
Let $T \in \mathfrak{u}(n)\subset \mathfrak{sp}(2n)$.
If $s\in \mcW$ is the vacuum projection of the harmonic oscillator $Q\in \mcW$, $Q(x,\xi)=\|x\|^2+\|\xi\|^2$, then
\[ \mu^{-1}(T)s = \frac{1}{2}\tr(T)s\]
\end{lemma}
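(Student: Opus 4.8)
The plan is to pass to the operator (Bargmann--Fock) picture, where $s$ becomes a rank-one projection, and to pin down the scalar factor in $\mu^{-1}(T)\# s = c(T)\,s$ by an invariance argument rather than by a brute-force Moyal-product computation. I would begin by recording two elementary facts about $s$. By Mehler's formula \eqref{Mehler}, $\Op^w(Q)$ is the harmonic oscillator $\ho$, whose ground state $|0\rangle$ is simple with energy $n$; since $e^{nt}h_t\to 2^n e^{-\|v\|^2}=s$ (pointwise, indeed in $\mcS$) as $t\to\infty$, while $e^{nt}e^{-t\ho}\to |0\rangle\langle 0|$, we get $\Op^w(s)=|0\rangle\langle 0|$. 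Thus $s$ is a rank-one idempotent in $\mcW$, and $Q\# s=n\,s$ because $\ho|0\rangle=n|0\rangle$. Also $U(n)\subset\mathrm{Sp}(2n)$ fixes $s$, since $s(v)=2^n e^{-\|v\|^2}$ depends only on $\|v\|$ and $U(n)\subset O(2n)$.

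Now for $T\in\mathfrak u(n)$, Lemma \ref{act} identifies the symplectic action of $T$ on $w\in\mcW$ with the inner derivation $w\mapsto[\mu^{-1}(T),w]$. Because $U(n)$ fixes $s$, the action of $T$ on $s$ vanishes, so $\mu^{-1}(T)\# s = s\#\mu^{-1}(T)$. Hence $\Op^w(\mu^{-1}(T))$ commutes with the rank-one projection $|0\rangle\langle 0|=\Op^w(s)$, therefore acts by a scalar $c(T)$ on its range $\CC|0\rangle$, and consequently $\mu^{-1}(T)\# s = c(T)\,s$. Since $\mu^{-1}$ and $\#\,s$ are linear, $c\colon\mathfrak u(n)\to\CC$ is a linear functional. (One could also compute $c(T)=\langle 0|\Op^w(\mu^{-1}(T))|0\rangle=\Tr\big(\Op^w(\mu^{-1}(T)\,s)\big)$ directly as a Gaussian integral, using Lemma \ref{trab} to replace $\#$ by the pointwise product; but that is not needed.)

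Next I would show $c$ is $\mathrm{Ad}(U(n))$-invariant. For $g\in U(n)$ let $\alpha_g$ be the induced algebra automorphism of $\mcW$. The map $\mu$ is $\mathrm{Sp}$-equivariant (immediate from $\mu^*(X)=[X,\cdot]$), so $\alpha_g\circ\mu^{-1}=\mu^{-1}\circ\mathrm{Ad}_g$; combined with $\alpha_g(s)=s$ this gives $c(\mathrm{Ad}_g T)\,s=\mu^{-1}(\mathrm{Ad}_g T)\# s=\alpha_g\big(\mu^{-1}(T)\# s\big)=\alpha_g\big(c(T)\,s\big)=c(T)\,s$, i.e. $c\circ\mathrm{Ad}_g=c$. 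An $\mathrm{Ad}$-invariant linear functional on $\mathfrak u(n)$ annihilates $[\mathfrak u(n),\mathfrak u(n)]=\mathfrak{su}(n)$, hence is a scalar multiple of the trace; so $c=\gamma\,\tr$ for a universal constant $\gamma$. To evaluate $\gamma$ it suffices to test one element: from $\mu^*(X)=[X,\cdot]=i\{X,\cdot\}$ on linear functions, a short Poisson-bracket computation shows $\mu\big(\tfrac i2 Q\big)=J$ --- the circle $t\mapsto e^{t\mu(\frac i2 Q)}$ being the phase-space rotation generated by the harmonic oscillator, namely multiplication by $e^{it}$ on $\CC^n$. Therefore $\mu^{-1}(J)=\tfrac i2 Q$ and $\mu^{-1}(J)\# s=\tfrac i2(Q\# s)=\tfrac{in}{2}\,s$, so $c(J)=\tfrac{in}{2}$; since $\tr(J)=\tr_{\CC}(i\,\mathrm{Id}_{\CC^n})=in$, we get $\gamma=\tfrac12$, and $\mu^{-1}(T)\# s=\tfrac12\,\tr(T)\,s$ for all $T\in\mathfrak u(n)$.

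The genuine mathematical content here is slight; the one place demanding care is the bookkeeping of conventions in the last step --- the identification $\RR^{2n}\cong\CC^n$, the sign in $v\mapsto\omega(v,\cdot)$ used to convert $\mu^*$ into $\mu$, and the orientation of the generating circle --- which must all be tracked consistently to be sure the constant comes out $+\tfrac12$ rather than $-\tfrac12$. The virtue of the invariance argument is precisely that it confines all of this sign-chasing to the single, transparent element $J$.
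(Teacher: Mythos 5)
Your proof is correct, and it takes a recognizably different route from the paper's. The paper diagonalizes $T\in\mathfrak{u}(n)$ over $\CC$, splits $\RR^{2n}$ into the corresponding $T$-invariant symplectic $2$-planes, computes $\mu^{-1}$ of each $2\times 2$ rotation block as $\tfrac{i\beta_k}{2}Q_k$, and concludes from $Q_k\#s_k=s_k$ and the tensor factorization $s=s_1\otimes\cdots\otimes s_n$. You instead establish the proportionality $\mu^{-1}(T)\#s=c(T)s$ abstractly (since $U(n)$ fixes $s$, Lemma \ref{act} gives $[\mu^{-1}(T),s]=0$, and an operator commuting with a rank-one projection is scalar on its range), then use $\mathrm{Ad}(U(n))$-invariance and linearity to force $c=\gamma\,\tr$, and finally evaluate $\gamma$ on the single element $J=i\,\mathrm{Id}$ via $\mu^{-1}(J)=\tfrac{i}{2}Q$ and $Q\#s=ns$. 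The two arguments share the same computational kernel --- the identity $\mu(\tfrac{i}{2}Q)=J$ (whose $n=1$ case is exactly the paper's opening calculation) and the eigenvalue equation for $s$ --- but your invariance argument replaces the blockwise bookkeeping with Schur's lemma plus the fact that $[\mathfrak u(n),\mathfrak u(n)]=\mathfrak{su}(n)$. What this buys is exactly what you say: all sign and convention checking is concentrated in one transparent element. What the paper's version buys is self-containedness (no appeal to the structure of invariant functionals) and the explicit formula $\mu^{-1}(T)=\sum\tfrac{i\beta_k}{2}\,1\otimes\cdots\otimes Q_k\otimes\cdots\otimes 1$, which is mildly more information than the lemma itself. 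One small point worth keeping in your write-up: the identification $\Op^w(s)=|0\rangle\langle 0|$ (equivalently, that $s$ is the rank-one vacuum projection) is used elsewhere in the paper (Corollary \ref{halfc1}) and your Mehler-limit derivation of it is a legitimate way to justify it, though you could equally cite \eqref{szego_symbol} and the surrounding discussion.
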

\begin{proof}
First consider the harmonic oscillator in the case $n=1$. 
We have $iQ\in \mathfrak{g}$, and $\mu(iQ)=i\{iQ,\,\cdot\,\}=-\{Q,\,\cdot\,\}$. 
From  $\{Q,x\}=-2\xi$, $\{Q,\xi\}=2x$ we obtain
\[ \mu\left(\frac{i}{2}Q\right)=\begin{pmatrix} 0 &-1\\ 1 &0 \end{pmatrix}\]
Now let $V=\RR^{2n}=\CC^n$. 
Thinking of $T$ as an element in $\mathfrak{u}(n)$ acting on $\CC^n$,
choose an orthonormal basis  $e_1$, $e_2$,\ldots, $e_n$ of $\CC^n$ consisting of eigenvectors of $T$, with corresponding eigenvalues $i\beta_1,\dots, i\beta_n$.
The 2-dimensional real subspaces $V_k\subset \RR^{2n}$ spanned by $e_k, Je_k$ are $T$-invariant.
With respect to the decomposition $\RR^{2n}=V_1\oplus V_2 \oplus\ldots V_n$ we have $T= B_1\oplus B_2\oplus \ldots B_n$, where $B_k=\begin{pmatrix}0 &-\beta_k \\ \beta_k &0 \end{pmatrix}$.
Note that  $e_1, Je_1$, $e_2, Je_2$, \ldots, $e_n, Je_n$ is a symplectic basis of $V$.
If $y_1, \eta_1$, $y_2, \eta_2$, \ldots, $y_k, \eta_k \in V^*$ is the dual basis of $V^*$,
then the isomorphism $\mathfrak{sp}(V) \cong \mathfrak{sp}(V^*)$ is  the identity map of matrices. 

Let $Q_k:= y_k^2+\eta_k^2$.
Then $Q=\sum Q_k$ as a function on $\RR^{2n}$.
Under the isomorphism
\[ \mcW(\RR^{2n},\omega)=\bigotimes_{k=1}^n \mcW(V_k,dy_k\wedge d\eta_k)\]
we have
\[ Q = \sum_{k=1}^n 1\otimes\cdots \otimes Q_k\otimes \cdots \otimes 1\]
and
\[ s=s_1\otimes s_2\otimes \cdots\otimes s_n\]
where $s_k\in \mcW(V_k)$ is the vacuum projection of the harmonic oscillator $Q_k\in \mcW(V_k)$.
The above calculations show that 
\[
\iso^{-1}(T)= \sum \frac{i \beta_k}{2}\,1\otimes \cdots\otimes Q_k\otimes \cdots \otimes 1
\]
Since $Q_ks_k=s_k$ we get
\[ \mu^{-1}(T)s= \sum \frac{i \beta_k}{2}\,s_1\otimes \cdots\otimes s_k\otimes \cdots \otimes s_n=\frac{1}{2}\tr(T)s\]

\end{proof}

\begin{corollary}\label{halfc1}
Choose a complex structure $J$ on $H$ that is compatible with the symplectic structure.
Let $\nabla$ be a {\em unitary} connection on $H$ and let $\con$ and $\curv$ be as in \eqref{defcon} and \eqref{deftheta}.
Then 
\[ \Tr(\curv^is)=\left(\frac{1}{2}\tr(\theta)\right)^i\in \Omega^{2i}(M)\qquad i=0,1,2,3,\dots\]
\end{corollary}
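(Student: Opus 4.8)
The plan is to reduce the statement to Lemma~\ref{curv_calculation} applied fiberwise, using that a scalar even form is central in $\Omega^\bullet(\Alg_H)$. Recall $\curv=\Iso(\theta)$, where $\theta\in\Omega^2(M,\mathfrak{sp}(H))$ is the curvature of $\nabla$; on the $\mcW_H$-component, $\curv$ acts by $\#$-multiplication with $\mu^{-1}(\theta)$. The key observation is that since $\nabla$ is a \emph{unitary} connection, its curvature $\theta$ takes values in $\mathfrak{u}(n)\subset\mathfrak{sp}(2n)$ under the identification $H\cong H^{1,0}$. Hence at each point $p\in M$ we may write $\theta_p=\sum_a\omega_a\,T_a$ with $\omega_a$ ordinary $2$-forms and $T_a\in\mathfrak{u}(n)$, so that $\mu^{-1}(\theta)=\sum_a\omega_a\,\mu^{-1}(T_a)$ and Lemma~\ref{curv_calculation} is available for each $\mu^{-1}(T_a)$.

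First I would compute $\curv\cdot s$. By Lemma~\ref{curv_calculation}, $\mu^{-1}(T_a)\#s=\tfrac12\tr(T_a)\,s$, whence
\[
\mu^{-1}(\theta)\#s=\bigl(\tfrac12\tr\theta\bigr)\,s,\qquad\text{with}\quad\tfrac12\tr\theta:=\tfrac12\sum_a\tr(T_a)\,\omega_a\in\Omega^2(M)
\]
a \emph{scalar} $2$-form. Iterating, I would write $\curv^i s=\mu^{-1}(\theta)^{\#(i-1)}\#\bigl(\mu^{-1}(\theta)\#s\bigr)$ and use that the scalar even $2$-form $\tfrac12\tr\theta$ is central in $\Omega^\bullet(\Alg_H)$, so that no Koszul signs intervene; a straightforward induction on $i$ then gives
\[
\curv^i s=\bigl(\tfrac12\tr\theta\bigr)^i\,s.
\]

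Finally I would apply the trace $\Tr$ on $\mcW_H$, which carries the scalar form through, so that $\Tr(\curv^i s)=\bigl(\tfrac12\tr\theta\bigr)^i\Tr(s)$. Since $s$ is the rank-one projection onto the vacuum of the harmonic oscillator, $\Tr(s)=1$; concretely, from \eqref{szego_symbol}, $\Tr(\Op^w(s))=(2\pi)^{-n}\int 2^n e^{-\|v\|^2}\,dv=(2\pi)^{-n}(2\pi)^n=1$. This yields the claim. The only non-formal ingredient is Lemma~\ref{curv_calculation}; everything else is bookkeeping, and the single point requiring care is that the unitarity hypothesis is precisely what forces $\theta$ into $\mathfrak{u}(n)$ so that that lemma applies term by term. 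I do not anticipate any further obstacle.
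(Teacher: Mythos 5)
Your proposal is correct and follows the same route as the paper, which simply cites Lemma~\ref{curv_calculation} together with the fact that $s$ is a rank-one projection (so $\Tr(s)=1$); you have merely filled in the fiberwise decomposition of $\theta\in\mathfrak{u}(n)$ and the induction that the paper leaves implicit. No gaps.
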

\begin{proof}This follows immediately from Lemma \ref{curv_calculation},
and the fact that $s$ is a rank 1 projection.
\end{proof}

\subsection{The character of the symbol of a Toeplitz operator}

If $T_f$ is a Toeplitz operator acting on $H^2(M)\otimes \CC^r$, 
then  $\tilde{T}_f \colon =T_f\oplus (I-S\otimes I_r)$ is an operator on $L^2(M)\otimes \CC^r$
that has the same index as $T_f$.
$\tilde{T}_f$ is an order zero pseudodifferential operator in the Heisenberg calculus.
The principal Heisenberg symbol of $\tilde{T}_f$ is 
\[\sigma_H(\tilde{T}_f) = (fs+(1-s),1)\in M_r(\Symb)\]
More precisely, if 
\[ f = \sum_{i} f_i\otimes m_{i} \in C^\infty(M)\otimes M_r(\CC)\]
then 
\[\sigma_H(\tilde{T}_f) =  \sum_{i}  (f_{i}s,0)\otimes m_i+ (1-s,1)\otimes I_r\in  \Symb\otimes  M_r(\CC)\]

\begin{theorem}\label{character_Toeplitz}
Let  $M$ be a closed contact manifold, on which a generalized Szeg\"o projection has been chosen.
If $T_f$ is a Toeplitz operator on $M$ then
\[ \chi(\sigma_H(T_f)) =  \Ch(f)\wedge \exp(\frac{1}{2}c_1(H^{1,0}))\in H^{odd}(M)\]
\end{theorem}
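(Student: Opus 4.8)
The plan is to evaluate the formula for $\chi$ from Theorem~\ref{summarychi} directly on $\sigma = \sigma_H(\tilde T_f) = \big(fs + (1-s),\,1\big)\in M_r(\Symb)$, where $s\in\mcW_H$ is the fibrewise vacuum projector \eqref{szego_symbol}, using a well-chosen connection. Since $[\chi(\sigma)]\in H^{odd}(M)$ is independent of the symplectic connection (Proposition~\ref{HomotopyT}), I would fix a compatible complex structure $J$ on $H$ and let $\con,\curv$ come from a \emph{unitary} connection $\nabla$ on $H^{1,0}$. Two reductions come first. Writing $\sigma_+:= fs+(1-s)=1+(f-1)s$, one has $\sigma_+^{-1}=1+(f^{-1}-1)s$, while the $\mcW_H^{op}$-component of $\sigma$ is the constant $1$; since every monomial in the formula for $\chi$ contains at least one factor $\con(\sigma)$ and $\con(1)=0$, the summand $(-1)^{n+1}\Trh$ of $\tau$ drops out entirely and only the $\mcW_H$-component survives. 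The crucial point is that $s$ is $U(n)$-invariant (it commutes with $\ho$), so, viewed as a section of $\mathcal A_H$ via the $U(n)$-reduction, it is parallel for a unitary connection: $\con(s)=0$ (equivalently $[\mu^{-1}(T),s]=0$ for $T\in\mathfrak u(n)$, by Lemma~\ref{curv_calculation}). Consequently $\con(\sigma_+)=(df)\,s$ and $\con(\sigma_+^{-1})=d(f^{-1})\,s$.

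Next I would collapse the fibrewise trace. By Lemma~\ref{curv_calculation} (equivalently Corollary~\ref{halfc1}) one has $\curv\cdot s = s\cdot\curv = c\,s$ with $c:=\tfrac12\tr\theta\in\Omega^2(M)$ a scalar form, and $\Tr(s)=1$ by the Gaussian integral $(2\pi)^{-n}\!\int 2^n e^{-\|v\|^2}\,dv=1$. In the product $\sigma^{-1}\curv^{i_0}\con(\sigma)\curv^{i_1}\con(\sigma^{-1})\cdots\con(\sigma)\curv^{i_{2l+1}}$ every $\con(\sigma^{\pm1})$ factor carries a trailing $s$, so pushing each $\curv^{i_j}$ onto the neighbouring $s$ turns it into the scalar $c^{i_j}$, and the leading $\sigma^{-1}=1+(f^{-1}-1)s$ converts the first $(df)\,s$ into $f^{-1}(df)\,s$. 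Using the elementary identity $r^{-1}(dr)(dr^{-1})(dr)\cdots(dr)=(-1)^l(r^{-1}dr)^{2l+1}$ for the alternating string with $l+1$ factors $dr$ and $l$ factors $dr^{-1}$, the whole product reduces to $(-1)^l\,c^{\,I}\,(f^{-1}df)^{2l+1}\,s$ with $I=i_0+\dots+i_{2l+1}$, so that (the matrix trace commuting with the scalar $c$, and $\Tr(s)=1$)
\begin{equation*}
\tau\!\left(\tr\,\sigma^{-1}\curv^{i_0}\con(\sigma)\curv^{i_1}\con(\sigma^{-1})\cdots\con(\sigma)\curv^{i_{2l+1}}\right)=(-1)^l\,c^{\,I}\,\tr\!\big((f^{-1}df)^{2l+1}\big).
\end{equation*}

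Finally I would resum. For fixed $l$ and $I$ there are $\binom{I+2l+1}{2l+1}$ tuples $(i_0,\dots,i_{2l+1})$ of non‑negative integers with sum $I$, and $\binom{I+2l+1}{2l+1}\cdot\tfrac{l!}{(I+2l+1)!}=\tfrac{l!}{I!\,(2l+1)!}$; the double sum then factors as a sum over $I$ of $\tfrac{1}{I!}\big(\tfrac{-1}{2\pi i}\big)^{I}c^{I}=\exp\!\big(\tfrac{-1}{2\pi i}c\big)$ times a sum over $l$ of $\tfrac{l!}{(2l+1)!}(-1)^l\big(\tfrac{-1}{2\pi i}\big)^{l+1}\tr\!\big((f^{-1}df)^{2l+1}\big)$. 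Using $(-1)^l\big(\tfrac{-1}{2\pi i}\big)^{l+1}=\tfrac{-1}{(2\pi i)^{l+1}}$, the latter is exactly $\Ch(f)$ as recalled in Section~\ref{cyclic section}, and matching Chern–Weil normalizations ($c_1(H^{1,0})=[\tfrac{i}{2\pi}\tr\theta]$) gives $\exp\!\big(\tfrac{-1}{2\pi i}c\big)=\exp\!\big(\tfrac12 c_1(H^{1,0})\big)$. All sums are finite on the compact $M$ for degree reasons, and one obtains $\chi(\sigma_H(T_f))=\Ch(f)\wedge\exp\!\big(\tfrac12 c_1(H^{1,0})\big)$ already at the level of differential forms, hence in $H^{odd}(M)$.

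I expect the one genuinely substantive step to be the identity $\con(s)=0$ for a unitary connection — recognizing that $U(n)$-invariance of the vacuum makes $s$ a parallel section of $\mathcal A_H$, which is what makes the computation collapse to scalar multiples of $s$; everything downstream is bookkeeping (matrix orderings, Koszul signs of even-degree forms, and pinning the constant $c=\tfrac12\tr\theta$ against the Chern–Weil representative of $\tfrac12 c_1(H^{1,0})$).
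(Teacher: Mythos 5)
Your proof is correct and follows essentially the same route as the paper's: a unitary connection makes the vacuum projection parallel ($\con(s)=0$), Lemma \ref{curv_calculation}/Corollary \ref{halfc1} collapses the curvature factors into powers of the scalar form $\tfrac12\tr\theta$, and the stars-and-bars resummation factors the result as $\Ch(f)\wedge\exp(\tfrac12 c_1(H^{1,0}))$. The only (harmless) difference is that you carry the honest invertible symbol $1+(f-1)s$ and its inverse through the computation, whereas the paper discards the $(1-s,1)\otimes I_r$ summand at the outset and works with $r=fs$.
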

\begin{proof}
Choose a unitary connection $\nabla$ for $H^{1,0}$.
We identify $H^{1,0}=H$ via $v\mapsto v+\bar{v}$, as usual.
If $\nabla$ is unitary, then  $\con s=0$.
Therefore the term $(1-s,1)\otimes I_r$ contributes zero to $\chi(\sigma_H(\tilde{T}_f))$.
Then  formula  \eqref{defchi} gives,
\begin{equation*}
    \chi(\sigma_H(T_f)) = \sum \limits_{l\ge 0} \sum \limits_{i_0, \ldots, i_{2l+1} \ge 0} \left(\frac{-1}{2 \pi i}\right)^{I+l+1} \frac{l!}{(I+2l+1)!} \Trh \left(\tr  r^{-1} \curv^{i_0} \con(
r)\curv^{i_1}\con(r^{-1}) \dots \con(r) \curv^{i_{2l+1}}\right)
\end{equation*}
where $I=i_0+i_1+\cdots+i_{2l+1}$ and we let
\[ r:=\sum_i f_is\otimes m_i \in \Symb\otimes M_r(\CC)\] 
From $\con s = 0$ we obtain
\[ \con(r)=\sum_i (df_i \otimes s)\otimes m_i \in \Omega^1(\Symb)\otimes M_r(\CC)\] 
or equivalently
\[ \con(r)=df \otimes s \in M_r(\Omega^1(M)) \otimes_{C^\infty(M)} \Symb\] 
Likewise
\[ \con(r^{-1})=d(f^{-1}) \otimes s\] 
Then 
\[r^{-1} \curv^{i_0} \con(r)\curv^{i_1}\con(r^{-1}) \dots \con(r) \curv^{i_{2l+1}}
=(-1)^l(f^{-1}df)^{2l+1}\otimes \curv^{i_0+\cdots+i_{2l+1}}s\]
Applying the traces we get
\[\Trh\left(\tr  r^{-1} \curv^{i_0} \con(r)\curv^{i_1}\con(r^{-1}) \dots \con(r) \curv^{i_{2l+1}} \right)=
(-1)^l\tr(f^{-1}df)^{2l+1}\cdot \Trh(\curv^{I}s)\]
By Corollary \ref{halfc1}, 
\[\Trh(\curv^{I}s)=\left(\frac{1}{2}\tr(\theta)\right)^{I}\]
Thus, $\chi(\sigma_H(T_f))$ can be written as a product of two factors,
\begin{equation*}
    \left(\sum \limits_{l\ge 0} (-1)^l\left(\frac{-1}{2 \pi i}\right)^{l+1}\frac{l!}{(2l+1)!}\tr(f^{-1}df)^{2l+1}\right)
    \left(\sum \limits_{i_0, \ldots, i_{2l+1} \ge 0} \left(\frac{-1}{2 \pi i}\right)^{I} \frac{(2l+1)!}{(I+2l+1)!} 
    \left(\frac{1}{2}\tr(\theta)\right)^{I}\right)
\end{equation*}
The first factor is $\chi(f)=\Ch(f)$ (see Proposition \ref{chern}).
For the second factor, note that 
\[ \frac{(I+2l+1)!}{I!(2l+1)!} \]
is the number of ways in which $I$
can be written as a sum $I=i_0+i_1+\cdots+i_{2l+1}$ of non-negative integers.
Therefore the second factor is
\[
 \sum_{k\ge 0}\frac{1}{k!} \left(\frac{-1}{2 \pi i}\cdot \frac{1}{2}\tr(\theta)\right)^{k}
    = \exp\left(\frac{1}{2}\cdot \frac{-1}{2 \pi i}\tr(\theta)\right)
\]
The first Chern class of $H^{1,0}$ is
\[ c_1(H^{1,0}) = \left[\frac{-1}{2 \pi i} \tr(\theta)\right]\in \Omega^2(M)\]
This completes the calculation.

\end{proof}

\section{$K$-theory}\label{section_Ktheory}

In this section we analyze the $K$-theory group $K_1(\Symb)$.
The aim of this section is to prove Proposition \ref{Ktheory}.
\vskip 6pt

For an order zero element $w\in \mcW^0$  the operator $\Op^w(w)$ is bounded on $L^2(\RR^n)$.
Let $W^0$ be the $C^*$-algebra that is the closure of $\mcW^0$ in the operator norm
(which is the same as the norm closure of $\mcW_q^0$). 
If $w\sim \sum w_{m}$ is the asymptotic expansion of $w=w(x,\xi)$,
then the leading term $w_0$ is constant on rays in $\RR^{2n}$,
and can be identified with a function $w_0\in C^\infty(S^{2n})$.
We have
\[ \|w_0\|_\infty \le \|\Op^w(w)\|\]
where $\|\cdot\|_\infty$ is the supremum norm. 
The map $w\mapsto w_0$ extends by continuity to a homomorphism of $C^*$-algebras
$W^0\to C(S^{2n-1})$.
The kernel of this map is the norm closure of the Schwartz space $\Sch(\RR^{2n})$,
which is the $C^*$-algebra $\KK$ of compact operators on $L^2(\RR)$.
More canonically, the kernel of $W^0(V,\omega)\to C(S(V))$  is the twisted convolution algebra $C^*(V,\omega)$
with product
\[ (f\ast_\omega g)(v) = \int e^{i\omega(v,w) } f(v-w)g(w)\,dw \qquad f,g\in C_c(V)\]
We have an $\mathrm{Sp}(V)$-equivariant short exact sequence,
\[ 0\to C^*(V,\omega) \to W^0(V,\omega)\to C(S(V))\to 0\]
For $a=(w_+,w_-)\in \Alg^0$ we define the $C^*$-norm
\[ \|a\| := \max\{ \|\Op^w(w_+)\|,  \|\Op^w(w_-)\|\}\]
Let $A^0$ be the $C^*$-algebra obtained by completing $\Alg^0$ in the norm $\|\cdot\|$.
If $a=(w_+,w_-)\in \Alg$ then the leading terms in the asymptotic expansions of $w_+$ and $w_-$ are equal.
We obtain a map
\[ \Alg^0\to C^\infty(S^{2n-1})\quad a=(w_+,w_-)\mapsto w_0\]
which  extends by continuity to $A^0\to C(S^{2n-1})$.
The kernel of this map is isomorphic to $\KK\oplus \KK$.
More canonically, we have a short exact sequence
\[ 0\to C^*(V,\omega)\oplus C^*(V,-\omega)\to  A^0(V,\omega)\to C(S(V))\to 0.\]
If $\sigma\in\Symb$,
then for $p\in M$ we have $\sigma(p)\in\Alg^0(H_p,\omega_p)$.
We let
\[ \|\sigma\| := \sup_{p\in M} \|\sigma(p)\|\]
Let $S_H$ be the $C^*$-algebraic closure of $\Symb$.
$S_H$ is the section algebra of a continuous field of $C^*$-algebras over $M$ whose fiber at $p\in M$ is  $A^0(H_p^*,\omega_p)$.
We obtain a short exact sequence of $C^*$-algebras
\[ 0\to I_H\to S_H\to C(S^*H)\to 0\]
In section 4 of \cite{BvE14} we analyze this short exact sequence.
The  ideal $I_H\subset S_H$ is the section algebra of a continuous field over $M$ with fibers $C^*(H_p^*,\omega_p)\oplus C^*(H_p^*,-\omega_p)\cong \KK\oplus\KK$.
In fact, $I_H$ is shown to be Morita equivalent to $C(M)\oplus C(M)$.

There is a $KK^M$-equivalence of short exact sequences,
\begin{equation*} \xymatrix {
 0\ar[r]
 & I_H\ar[r]\ar@{.>}[d]^{\cong}_{KK^M}
 & S_H\ar[r]\ar@{.>}[d]^{\cong}_{KK^M}
 & C(S^*H)\ar[r]\ar[d]^{=}  
 & 0
 \\
 0\ar[r]
 & C_0(H^*\sqcup H^*) \ar[r]
 & C(S^*M) \ar[r]
 & C(S^*H)\ar[r]
 &0
 }
 \end{equation*} 
 where $H^*_p\sqcup H^*_p$ is identified with the disjoint union of the (open) upper and lower hemispheres of $S^*_pM=S(H^*_p\times \RR)$.
From this $KK^M$-equivalence we obtain  a commutative diagram in $K$-theory,
\begin{equation*} \xymatrix {
K_1(I_H)\ar[r]\ar[d]^{\cong}
& K_1(S_H)\ar[r]\ar[d]^{\cong}
 & K^1(S^*H)\ar[d]^{=}  
 \\
K^1(H^*)\oplus K^1(H^*)\ar[r]
&K^1(S^*M) \ar[r]
 & K^1(S^*H)
 }
 \end{equation*} 
 The two rows are exact in the middle.
 
The isomorphism $K^1(H^*)\oplus K^1(H^*)\cong K_1(I_H)$ is obtained as the direct sum of two maps $K^1(H^*)\to K_1(I_H)$.
The first of these maps is as follows.
The symplectic  vector bundle $H^*$  is a Spin$^c$ vector bundle, and we have the Thom isomorphism
\[ K^1(H^*)\cong K^1(M)\]
For a continuous function $f \colon M\to U(r)$, the principal Heisenberg symbol of the Toeplitz operator $T_f$ is an element in $M_r(I_H)$.
We obtain a map
\[ K^1(M)\to K_1(I_H)\quad [f]\mapsto [\sigma_H(T_f)]\]
The first map $K^1(H^*)\to K_1(I_H)$ is the composition of the Thom isomorphism $K^1(H^*)\cong K^1(M)$
with the map $K^1(M)\to K_1(I_H)$ to symbols of Toeplitz operators.
(For  details see  \cite{BvE14}*{Section 4}.)

The second map $K^1(H^*)\to K_1(I_H)$ shall not concern us here.
It is similar to the first map, but constructed  using the opposite contact structure on $M$,
where $\alpha$ is replaced by $-\alpha$.

\begin{proposition}\label{Ktheory}
Let $M$ be a closed contact manifold.
Every element in $K_1(S_H)$ can be represented as the sum of the principal Heisenberg symbol of a Toeplitz operator $T_f$ for a smooth function $f \colon M\to U(r)$,  and an element in the image of the map $j_* \colon K^1(M)\to K_1(S_H)$ determined by the inclusion  $j\colon C(M)\hookrightarrow S_H$.
\end{proposition}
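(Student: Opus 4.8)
The plan is to push the whole question, through the $KK^M$-equivalence recorded above, into topological $K$-theory of the cosphere bundle $S^*M$, and there to exploit the decomposition of $S^*M$ into two hemisphere disk bundles. First I would use that the $KK^M$-equivalence $S_H\sim_{KK^M}C(S^*M)$ induces a $C(M)$-linear isomorphism $K_1(S_H)\cong K^1(S^*M)$; being $M$-equivariant it intertwines the two structural inclusions of $C(M)$, so the inclusion of the center $j\colon C(M)\hookrightarrow S_H$ corresponds to the pullback $\pi_M^*\colon C(M)\hookrightarrow C(S^*M)$ along $\pi_M\colon S^*M\to M$, and hence $j_*$ becomes $\pi_M^*\colon K^1(M)\to K^1(S^*M)$. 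By commutativity of the induced ladder, together with the identification recalled before the proposition of the first summand of $K^1(H^*)\oplus K^1(H^*)\to K_1(I_H)$ with the Thom isomorphism followed by $[f]\mapsto[\sigma_H(T_f)]$, the subgroup $\mathcal T\subset K_1(S_H)$ of Toeplitz symbol classes is carried onto the image in $K^1(S^*M)$ of the first summand of the bottom left-hand horizontal map.

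Next I would unwind that image geometrically. Writing $S^*M=S(H^*\oplus\underline{\RR})$ via $TM\cong H\oplus\underline{\RR}$, the equator is $S^*H$ and $S^*M=D^+\cup_{S^*H}D^-$, where $D^\pm$ are the closed ``upper/lower'' hemisphere bundles; each is a disk bundle over $M$ that deformation retracts onto the section $N_\pm\cong M$ determined by $\pm\alpha$. Under the identification of $H^*\sqcup H^*$ with the two open hemispheres, the first summand is $K^1_c(\operatorname{int}D^+)$ and the bottom left-hand map on it is extension by zero into $K^1(S^*M)$. The six-term exact sequence of $0\to C_0(\operatorname{int}D^+)\to C(S^*M)\to C(D^-)\to 0$ shows its image is $\ker\!\big(K^1(S^*M)\to K^1(D^-)\big)$, and composing with the isomorphism $K^1(D^-)\cong K^1(M)$ given by restriction to $N_-$ identifies $\mathcal T$ with $\ker\!\big(\operatorname{res}_{N_-}\colon K^1(S^*M)\to K^1(M)\big)$.

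The statement then follows at once: since $N_-$ is a section of $\pi_M$ we have $\operatorname{res}_{N_-}\circ\pi_M^*=\operatorname{id}_{K^1(M)}$, so $\pi_M^*$ splits the surjection $\operatorname{res}_{N_-}$ and therefore $K^1(S^*M)=\pi_M^*K^1(M)+\ker(\operatorname{res}_{N_-})=\pi_M^*K^1(M)+\mathcal T$. Transporting back through $K_1(S_H)\cong K^1(S^*M)$ yields $K_1(S_H)=j_*K^1(M)+\mathcal T$, as asserted. In particular the second summand $K^1(H^*)$ — the ``opposite contact structure'' map — never has to be analyzed: it equals $\ker(\operatorname{res}_{N_+})$, but once $\pi_M^*$ splits $\operatorname{res}_{N_-}$ the Toeplitz summand alone already spans the quotient. (An essentially equivalent route avoids $C(S^*M)$ altogether and works with the quotient of $S_H$ by the ideal corresponding to one hemisphere, whose $K$-theory is $K^1(M)$ with the quotient map splitting $j_*$.)

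The part I expect to require the most care is bookkeeping rather than anything deep. The one genuinely structural point is that the $KK^M$-equivalence carries $j$ to $\pi_M^*$: this is where $M$-equivariance is essential, and it should follow from the fact that an $M$-equivariant $KK$-equivalence is by definition a pair of mutually inverse $C(M)$-linear $KK$-classes, hence compatible with the $C(M)$-algebra structure maps. The remaining care is to match the ``$+$'' Weyl slot — in which the Szeg\"o symbol $s=(s,0)$ lives — with the correct open hemisphere, so that the Toeplitz summand is extension by zero from $\operatorname{int}D^+$ and hence the kernel of restriction to the \emph{opposite} pole section $N_-$; getting this orientation backwards does no harm, since $\pi_M^*$ splits restriction to either pole. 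Exactness of the six-term sequences used, the retraction $D^-\simeq M$, and the identification of the first summand with Toeplitz symbols are all available from the material preceding the proposition.
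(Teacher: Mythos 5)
Your proposal is correct and takes essentially the same route as the paper: both transfer the problem to $K^1(S^*M)$ via the commutative diagrams established before the proposition, identify the Toeplitz classes with the image of the first hemisphere summand, and split off the pullback from $M$ using the section of $\pi\colon S^*M\to M$ determined by $-\alpha$. The only difference is cosmetic: the paper realizes the splitting by explicitly homotoping a representative $f\colon S^*M\to U(r)$ to be constant on the lower hemispheres and subtracting $\pi^*[f\circ(-\alpha)]$, whereas you package the same decomposition as ``image of extension by zero $=$ kernel of restriction to $D^-$'' via the six-term exact sequence together with the identity $\operatorname{res}_{N_-}\circ\pi^*=\mathrm{id}$.
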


\begin{proof}

An element in $K^1(S^*M)$ is represented by a continuous map $S^*M\to U(r)$.
Every such map is homotopic to a smooth map $f \colon S^*M\to U(r)$ that, in each fiber, is constant on the lower hemisphere of $S^*_pM=S(H_p^*\oplus \RR)$.
The contact form $\alpha$ determines a map $-\alpha \colon M\to S^*M$ to the south pole in each fiber.
Let $g:=f\circ (-\alpha) \colon M\to U(r)$ and $\pi^*g=f\circ(-\alpha)\circ \pi \colon S^*M\to U(r)$, where $\pi \colon S^*M\to M$ is the projection.
Thus, $[f]+\pi^*[g] = [f(\pi^*g)]$ is in the image of the map $K^1(H^*)\to K^1(S^*M)$,
while $\pi^*[g]$ is the pullback of $[g]\in K^1(M)$ via $\pi$.

Now consider the isomorphism $K^1(S^*M)\cong K_1(S_H)$.
Commutativity in the diagram
\begin{equation*} \xymatrix {
K_1(C(M))\ar[r]^{j_*}\ar[d]^{\cong}
& K_1(S_H)\ar[d]^{\cong}
 \\
K^1(M)\ar[r]^{\pi^*}
&K^1(S^*M) }
 \end{equation*} 
implies that the element $\pi^*[g]\in K^1(S^*M)$ corresponds to $j_*[g]\in K_1(S_H)$.
By commutativity in the diagram
\begin{equation*} \xymatrix {
K_1(I_H)\ar[r]\ar[d]^{\cong}
& K_1(S_H)\ar[d]^{\cong}
 \\
K^1(H^*)\oplus K^1(H^*)\ar[r]
&K^1(S^*M)
 }
 \end{equation*} 
we conclude that every element in $K_1(S_H)$
is the sum of an element of the form $j_*[g]$ for $[g]\in K_1(C(M))$,
and an element in the composition the first map $K^1(H^*)\to K_1(I_H)$ with $K_1(I_H)\to K_1(S_H)$.
The latter element can be represented as the Heisenberg symbol of a Toeplitz operator, as explained above.

\end{proof}

\section{Proof of the index formula}\label{section_proof}

We are now ready to prove our index formula.

\begin{theorem}\label{main_theorem}
Let $M$ be a compact smooth manifold of dimension $2n+1$ with contact form $\alpha$. 
We orient  $M$ by the volume form $\alpha(d\alpha)^n$. 
If
\begin{equation*}P \colon C^\infty(M,\CC^r)\to C^\infty(M,\CC^r)\end{equation*}
is a Heisenberg pseudodifferential operator of  order zero
that acts on sections in a trivial bundle $M\times \CC^r$, 
with invertible Heisenberg principal  symbol $\sigma^0_H(P)\in M_r(\Symb)$, then the index of $P$ is
\begin{equation*} \ind  P = \int_M \chi(\sigma^0_H(P))\wedge \hat{A}(M).\end{equation*}
Here $\chi$ is the character homomorphism from Theorem \ref{summarychi}.
\end{theorem}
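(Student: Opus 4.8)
The plan is to reduce the general index formula to two facts already assembled in the paper: (1) the formula $\ind P = \int_M \chi(\sigma^0_H(P))\wedge\hat A(M)$ is additive in the $K_1(\Symb)$-class of the symbol, since both sides descend to homomorphisms on $K_1(\Symb)$ — the left side because the index is a homotopy invariant that factors through $K_1$ (this is the content of the $K$-theoretic results of \cites{vE10a, BvE14}), and the right side because $\chi$ is a homomorphism $K_1(\Symb)\to H^{odd}(M)$ by Theorem \ref{summarychi} and integration against $\hat A(M)$ is linear; and (2) by Proposition \ref{Ktheory}, every class in $K_1(\Symb)\cong K_1(S_H)$ is a sum of the symbol class of a Toeplitz operator $T_f$ and a class $j_*[g]$ pulled back from $K^1(M)$ via the inclusion $C(M)\hookrightarrow S_H$, i.e. represented by a vector bundle automorphism $g\colon M\to U(r)$. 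So it suffices to verify the formula separately on these two types of generators.

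For a Toeplitz operator, I would invoke Theorem \ref{character_Toeplitz}, which computes $\chi(\sigma_H(T_f)) = \Ch(f)\wedge\exp(\tfrac12 c_1(H^{1,0}))$, together with the identity $\exp(\tfrac12 c_1(H^{1,0}))\wedge\hat A(M) = \Td(H^{1,0})$. Then
\begin{equation*}
\int_M \chi(\sigma_H(T_f))\wedge\hat A(M) = \int_M \Ch(f)\wedge\Td(H^{1,0}) = \ind T_f
\end{equation*}
by Boutet de Monvel's theorem in the contact setting, Theorem \ref{EM_Toeplitz}. (One also uses that $\ind(T_f\oplus(I-S)) = \ind T_f$, so that the Heisenberg-elliptic operator representing the class has the stated index.) For a vector bundle automorphism $g$ — an order zero Heisenberg operator of index zero — I must show $\chi(\sigma_H(g)) = 0$, or at least that $\int_M\chi(\sigma_H(g))\wedge\hat A(M)=0$. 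Here the symbol is the constant (in the fiber variable) function $g\in C^\infty(M,\mathrm{GL}(r,\CC))\subset M_r(\Symb)$, so $\sigma^{-1}$ is also fiberwise constant. Plugging into formula \eqref{defchi}/\eqref{chi_character}: every factor $\con(\sigma)$, $\con(\sigma^{-1})$ differentiates only in the $M$-direction and leaves a fiberwise-constant (hence polynomial of degree $0$) Weyl symbol, and each interposed $\curv^{i_k}$ contributes a factor that is, fiberwise, a pair of degree-$2$ polynomials (by the definition $\curv=\nu(\theta)$). Thus every term of $\chi(\sigma_H(g))$ is $\tau$ applied to an element of $\Alg_H$ that is fiberwise a pair of polynomials, and by Corollary \ref{tau_of_polynomial} (equivalently, the vanishing $\tau(\curv^k)=0$ of Lemma \ref{tau_of_curv}, extended to products with polynomial symbols) each such term vanishes. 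I expect this is the step that "turns out to be non-trivial" as flagged in the introduction: one must check carefully that the full product $\sigma^{-1}\curv^{i_0}\con(\sigma)\cdots\curv^{i_{2l+1}}$, not merely $\curv^k$, lands in the fiberwise-polynomial subspace on which $\tau$ is identically zero — the key point being that $\Sym^{(N)}$-compressions are irrelevant and the multiplication structure of the Weyl algebra preserves "polynomial of bounded degree" only because all the symbols involved here literally are polynomials (the constant $g$ has degree $0$, $\curv$ has degree $2$), so $\#$ restricted to polynomials is the honest Moyal product and the result is again polynomial. Combining the two generator computations via additivity completes the proof.

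The main obstacle, then, is the vector-bundle-automorphism case: establishing that $\chi$ kills $j_*(K^1(M))$. The subtlety is that naively one might hope $\chi(g)$ equals the ordinary odd Chern character $\Ch(g)$ times some characteristic class of $H$, which would \emph{not} integrate to zero against $\hat A(M)$ in general; the point is precisely that the trace $\tau$, unlike a fiberwise trace on a finite-rank bundle, annihilates all polynomial Weyl symbols (Corollary \ref{tau_of_polynomial}), so the entire contribution collapses. I would organize this as a lemma: \emph{for $\sigma\in M_r(\Symb)$ whose entries are fiberwise-constant functions on $H^*$, $\chi(\sigma)=0$}, proved by the degree-counting argument above, and then state the theorem as the combination of this lemma, Theorem \ref{character_Toeplitz}, Theorem \ref{EM_Toeplitz}, Proposition \ref{Ktheory}, the homomorphism property from Theorem \ref{summarychi}, and homotopy invariance of the index.
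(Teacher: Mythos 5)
Your proposal is correct and follows essentially the same route as the paper: reduce via Proposition \ref{Ktheory} to Toeplitz symbols (handled by Theorem \ref{character_Toeplitz} together with Theorem \ref{EM_Toeplitz} and the identity $\Td(H^{1,0})=\exp(\tfrac12 c_1(H^{1,0}))\wedge\hat A(M)$) and to vector bundle automorphisms, where the vanishing of $\chi(g)$ comes down to $\tau$ annihilating fiberwise polynomial symbols. The paper's Lemma \ref{vb_automorphism} organizes the last step by factoring the scalar forms $g^{-1}dg\cdots$ out of $\tau$ and invoking $\tau(\curv^I)=0$ (Lemma \ref{tau_of_curv}), which is the same degree-counting argument you give via Corollary \ref{tau_of_polynomial}.
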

\begin{proof}
The principal Heisenberg symbol $\sigma^0_H(P)\in M_r(\Symb)$ determines an element in $K$-theory,
\[ [\sigma^0_H(P)]\in K_1(\Symb)\]
The index determines a homomorphism,
\[ K_1(\Symb)\to \ZZ\quad [\sigma_H^0(P)]\mapsto \ind  P\]
We need to verify that this index map is equal to the map 
\[ K_1(\Symb)\to \ZZ\quad \sigma \mapsto \int_M \chi(\sigma)\wedge \hat{A}(M)\]
Since $\Symb$ is a holomorphically closed dense subalgebra of the $C^*$-algebra $S_H$ (see the appendix),
we have
\[ K_1(\Symb)\cong K_1(S_H)\]
By Proposition \ref{Ktheory},
it suffices to verify the index formula in the case where $P$ is a Toeplitz operator
or $P$ is a  vector bundle automorphism.
The index of a vector bundle automorphism is zero, and in this case Theorem \ref{main_theorem} follows from Lemma \ref{vb_automorphism} below.

By Theorem \ref{character_Toeplitz}, for a Toeplitz operator $T_f$,
\[\int_M \chi(\sigma^0_H(T_f))\wedge \hat{A}(M) = \int_M \Ch(f)\wedge \exp\left(\frac{1}{2}c_1(H^{1,0})\right)\wedge \hat{A}(M)\]
Note that 
\[ \Td(H^{1,0}) = \exp\left(\frac{1}{2}c_1(H^{1,0})\right)\wedge \hat{A}(M)\]
Thus, in the case of order zero operators, Theorem \ref{main_theorem} reduces
to  Boutet de Monvel's theorem, and its generalization to contact manifolds due to Epstein and Melrose (Theorems \ref{BdM} and  \ref{EM_Toeplitz}).

\end{proof}

If $g$ is an automorphism of the trivial vector bundle $M\times \CC^r$,
then $g$ is an order zero operator in the Heisenberg calculus.
Its principal Heisenberg symbol is the image of $g\in \mathrm{GL}(r,C^\infty(M))$ in $\mathrm{GL}(r, \Symb)$
via the inclusion $C^\infty(M)\subset \Symb$.

\begin{lemma}\label{vb_automorphism}
If $g\in \mathrm{GL}(r,C^\infty(M))\subset \mathrm{GL}(r,\Symb)$ then $\chi(g)=0$.
\end{lemma}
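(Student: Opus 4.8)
The plan is to show that every summand in the defining formula \eqref{defchi} for $\chi(g)$ vanishes individually. The crucial structural fact is that the inclusion $C^\infty(M)\hookrightarrow\Symb\subset\Alg_H$ has image in the \emph{centre} of $\Alg_H$: a function $f\in C^\infty(M)$ is represented in the fibre $\Alg(H_p^*,\omega_p)$ by the pair $(f(p),f(p))$ of constant functions, and constants are central for the product $\#$. Hence $g,g^{-1}\in\mathrm{GL}(r,C^\infty(M))$ define central elements of $M_r(\Alg_H)$; moreover, since $\con$ acts on $C^\infty(M)$ by the de Rham differential (the inner term $[\nu(\beta),f]$ in \eqref{defcon} vanishes for central $f$), one has $\con(g)=dg$ and $\con(g^{-1})=dg^{-1}$, both lying in $M_r(\Omega^1(M))$ with values in the central subalgebra $C^\infty(M)\subset\Alg_H$.

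Granting this, I would rearrange a typical summand
\[
\tau\!\left(\tr\; g^{-1}\,\curv^{i_0}\,\con(g)\,\curv^{i_1}\,\con(g^{-1})\cdots\con(g)\,\curv^{i_{2l+1}}\right),\qquad I:=i_0+\cdots+i_{2l+1}.
\]
Because $\curv$ has even form degree, commuting the central factors $g^{-1},\con(g),\con(g^{-1})$ (of form degree $\le 1$) past the powers of $\curv$ introduces no sign, so the argument of $\tau$ equals $\big(\tr\; g^{-1}\,\con(g)\,\con(g^{-1})\cdots\con(g)\big)\,\curv^{I}$, where the first factor is an ordinary matrix-valued $(2l+1)$-form on $M$ (with scalar, i.e. central, $\Alg_H$-component) and $\curv^{I}\in\Omega^{2I}(\Alg_H)$. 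Since $\tau\colon\Omega^\bullet(\Alg_H)\to\Omega^\bullet(M)$ is $\Omega^\bullet(M)$-linear, the summand becomes $\big(\tr\; g^{-1}dg\,dg^{-1}\cdots dg\big)\wedge\tau(\curv^{I})$.

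Finally I would invoke Lemma \ref{tau_of_curv}: $\tau(\curv^{I})=0$ for every $I\ge 0$ (the case $I=0$ reading $\tau(1)=0$, cf. Corollary \ref{tau_of_polynomial}). Thus each summand of \eqref{defchi} is zero, whence $\chi(g)=0$.

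The only delicate point is the bookkeeping in the middle step --- that the factorisation ``(central matrix form)$\,\cdot\,\curv^{I}$'' is valid in $\Omega^\bullet(\Alg_H)\otimes M_r(\CC)$ on the nose (not merely modulo lower order terms), and that reordering produces no graded sign. Both are immediate once one has the centrality of $C^\infty(M)$ in $\Alg_H$ and the evenness of $\curv$, so there is no genuine obstacle; the lemma is a formal consequence of the identity $\tau(\curv^k)=0$.
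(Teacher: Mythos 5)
Your proposal is correct and follows essentially the same route as the paper's proof: observe that $g$ is central with $\con(g)=dg$ commuting with $\curv$, factor each summand as $\tau(\curv^I)$ times the matrix form $\tr\,g^{-1}dg\,d(g^{-1})\cdots dg$, and conclude by Lemma \ref{tau_of_curv}. Your extra remarks on the centrality of $C^\infty(M)$ and the absence of graded signs (since $\curv$ has even degree) simply make explicit what the paper leaves implicit.
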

\begin{proof}
Since $\con(g) = dg$ commutes with $\curv$, we have
 \begin{equation*}
\chi(g) = \sum \limits_{l\ge 0} \sum \limits_{i_0, \ldots, i_{2l+1} \ge 0} \left(\frac{-1}{2 \pi i}\right)^{I+l+1} \frac{l!}{(I+2l+1)!} \tau(\curv^I) \tr  g^{-1} dg\,d(g^{-1})\cdots dg,
\end{equation*}
where $I=i_0+\cdots+i_{2l+1}$.
By Lemma  \ref{tau_of_curv}, $\tau(\curv^I)=0$ for all $I$.

\end{proof}

\begin{remark}
The index formula of Theorem \ref{main_theorem} also applies verbatim to operators of arbitrary integer order $m\in\ZZ$.
Let $\Delta_H$ be a sublaplacian on $M$.
Then 
\[ (1+\Delta_H)^{-m/2}\circ P  \colon C^\infty(M,\CC^r)\to C^\infty(M,\CC^r)\]
is a Heisenberg elliptic operator of order zero, with the same index as $\msL$.
The Heisenberg principal symbol of $\Delta_H$ is $\sigma^2_H(\Delta_H)=(Q,Q)$,
where $Q$ is the harmonic oscillator (see section \ref{sec:symbols}). 
If  $\sigma_H^m(P)=(\sigma_+,\sigma_-)$ then
\[ \sigma_H^0((1+\Delta_H)^{-m/2}\circ P) = (Q^{-m/2}\#\sigma_+,\sigma_-\#Q^{-m/2})\in \Alg_H\]
If we choose a unitary connection on $H$,
then $\con (Q,Q)=0$ and $(Q,Q)$ commutes with $\curv$ in $\mcW_H\oplus \mcW_H^{op}$.
An easy calculation shows  that
\[ \chi(Q^{-m/2}\#\sigma_+,\sigma_-\#Q^{-m/2})=\chi(\sigma_+,\sigma_-)\]

\end{remark}

\begin{remark}
An alternative proof of Theorem \ref{main_theorem}, which gives some insight into  the nature of $\chi(\sigma)$,
is as follows.
Let $\Phi$ denote the  isomorphism
\[ \Phi:K_1(\Symb)\stackrel{\cong}{\lra} K^1(S^*M)\]
of section \ref{section_Ktheory}.
As shown in \cite{vE10a},
the index of a Heisenberg elliptic operator $P$
is computed by the Atiyah-Singer index formula applied to $\Phi(\sigma_H(P))$,
\[ \ind P = \int_{S^*M} \Ch(\Phi(\sigma_H(P)))\wedge \Td(TM\otimes \CC)\]
The question left open in \cite{vE10a} was how to compute $\Phi(\sigma_H(P))$.
Our index formula can  be interpreted as precisely such a  computation.
\begin{proposition}
On a compact contact manifold $M$,
\[ \int_M\chi(\sigma)\wedge \hat{A}(M) = \int_{S^*M}\Ch(\Phi(\sigma))\wedge \Td(TM\otimes \CC)\]
for all $\sigma\in K_1(\Symb)$.
\end{proposition}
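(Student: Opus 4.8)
The plan is to deduce the proposition from Theorem \ref{main_theorem} together with the $K$-theoretic index formula of \cite{vE10a}: both sides of the asserted identity are, by construction, homomorphisms $K_1(\Symb)\to\CC$, and I would show that each of them coincides with the analytic index.

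First I would recall that the analytic index gives a homomorphism $\ind\colon K_1(\Symb)\to\ZZ$. A class in $K_1(\Symb)$ is represented by an invertible $\sigma\in M_r(\Symb)$; since the principal symbol map $M_r(\Psi^0_H)\to M_r(\Symb)$ is onto, $\sigma$ lifts to a Heisenberg pseudodifferential operator $P\colon C^\infty(M,\CC^r)\to C^\infty(M,\CC^r)$ of order zero with invertible symbol, which is Fredholm because $M$ is compact, and $\ind P$ depends only on the class $\sigma$ by the standard argument (it is the image of $\sigma$ under the boundary map of the symbol extension, landing in $K_0$ of the compacts). By Theorem \ref{main_theorem}, $\int_M\chi(\sigma)\wedge\hat{A}(M)=\ind P$, so the left-hand side of the proposition equals $\ind$ as a homomorphism on $K_1(\Symb)$.

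Next I would invoke the result of \cite{vE10a} recalled in the remark preceding the proposition: for such $P$ one has $\ind P=\int_{S^*M}\Ch(\Phi(\sigma_H(P)))\wedge\Td(TM\otimes\CC)$, where $\Phi\colon K_1(\Symb)\to K^1(S^*M)$ is the isomorphism of Section \ref{section_Ktheory}. Hence the right-hand side of the proposition also computes $\ind$ on $K_1(\Symb)$. Comparing the two identities gives, for every $\sigma\in K_1(\Symb)$,
\[
\int_M\chi(\sigma)\wedge\hat{A}(M)=\ind(\sigma)=\int_{S^*M}\Ch(\Phi(\sigma))\wedge\Td(TM\otimes\CC),
\]
which is the claim.

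The one point that requires care --- and the main obstacle --- is that the map $\Phi$ of Section \ref{section_Ktheory}, defined via the $KK^M$-equivalence of the symbol exact sequence with $0\to C_0(H^*\sqcup H^*)\to C(S^*M)\to C(S^*H)\to 0$, must be the same isomorphism (up to the conventions fixing the Atiyah--Singer map) as the tangent-groupoid isomorphism used in \cite{vE10a}. I would dispose of this by observing that both isomorphisms sit in the same commutative square with the Thom isomorphism and the Toeplitz map $K^1(M)\to K_1(\Symb)$ (cf. \cite{BvE14}*{Section 4}), and that by Proposition \ref{Ktheory} the group $K_1(\Symb)$ is generated by the images of Toeplitz symbols and of $j_*\colon K^1(M)\to K_1(\Symb)$; on these generators the two isomorphisms coincide, so they agree on all of $K_1(\Symb)$. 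If instead one simply takes the cited remark at face value, this identification is already in place and the proof collapses to the two-line comparison above; in either case no computation beyond Theorem \ref{main_theorem} and \cite{vE10a} enters.
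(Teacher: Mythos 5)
Your argument is mathematically sound: both sides of the identity are homomorphisms $K_1(\Symb)\to\CC$, the left side equals the analytic index by Theorem \ref{main_theorem}, the right side equals it by the $K$-theoretic result of \cite{vE10a}, and the compatibility of the two versions of $\Phi$ is correctly identified as the only point needing care. But this is a genuinely different route from the paper's, and it misses the point of the proposition. The proposition sits inside a remark whose stated purpose is to give an \emph{alternative} proof of Theorem \ref{main_theorem}: one combines the proposition with the formula of \cite{vE10a} to recover the index theorem. Your proof presupposes Theorem \ref{main_theorem}, so it renders that alternative proof circular. The paper instead proves the identity by a direct cohomological computation on the generators of $K_1(\Symb)$ supplied by Proposition \ref{Ktheory}, never invoking the analytic index: for a vector bundle automorphism $g$ both sides vanish (the left by Lemma \ref{vb_automorphism}, the right because $\Phi(j_*[g])=\pi^*[g]$ and $\pi_!\circ\pi^*=0$ on forms of the relevant degree); for a Toeplitz symbol $\sigma_f$ one has $\chi(\sigma_f)\wedge\hat{A}(M)=\Ch(f)\wedge\Td(H^{1,0})$ by Theorem \ref{character_Toeplitz}, while $\Phi(\sigma_f)=f\otimes\Lambda$ with $\Lambda$ the Thom/Bott class, and the identities $\pi_!(\Ch(\Lambda))=\Td(H^{0,1})^{-1}$ and $\Td(TM\otimes\CC)=\Td(H^{1,0})\wedge\Td(H^{0,1})$ close the computation. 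What the paper's approach buys is precisely the independence from the main theorem; what yours buys is brevity, at the cost of making the proposition useless for the purpose it was stated for.
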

\begin{proof}
As in the proof of Theorem \ref{main_theorem}, it suffices to prove the equality for two kinds of elements in $K_1(\Symb_1)$.
Let $\pi:TM\to M$ be the projection and $\pi_!:H_c^{k+2n+1}(TM)\to H^k(M)$
denotes integration in the fiber.
For a smooth map $g:M\to U(r)$ we have $\pi_!(\pi^*g)=0$, and the equality follows from Lemma \ref{vb_automorphism}.
If $f:M\to U(r)$ and 
\[\sigma_f=(fs+(1-s),1)\in \Symb\]
 is the symbol of a Toeplitz operator,
then by Theorem \ref{character_Toeplitz},
\[ \chi(\sigma_f)\wedge \hat{A}(M)=\Ch(f)\wedge \Td(H^{1,0})\]
From the results discussed in section \ref{section_Ktheory} one derives,
\[ \Phi(\sigma_f)=f\otimes \Lambda\in K^1(S^*M)\]
where  $f\in K^1(M)$ and $\Lambda\in K^0(S^*M)$ is the Thom class vector bundle of $S^*M=S(H^{0,1}\oplus \underline{\RR})$
($\Lambda$  restricts to a Bott generator vector bundle in each fiber $\cong S^{2n}$).
A characteristic class calculation shows that
\[ \pi_!(\Ch(\Lambda))=\left(\Td(H^{0,1})\right)^{-1}\]
The proposition then follows from
\[ \Td(TM\otimes \CC) = \Td(H^{1,0})\wedge \Td(H^{0,1})\]

\end{proof}
Note that since $\Td(TM\otimes \CC)=\hat{A}(M)^2$, we may state the same result as
\[ [\chi(\sigma)] = \pi_!(\Ch(\Phi(\sigma)))\wedge \hat{A}(M)\in H^{odd}(M)\]
for all $\sigma\in K_1(\Symb)$.

\end{remark}

\section*{Appendix. Holomorphic closure of the Fr\'echet algebra $\Symb$}

The algebra of Heisenberg principal symbols of order zero $\Symb$
is a Fr\'echet algebra. It is a holomorphically closed dense subalgebra of the $C^*$-algebra $S_H$,
and therefore the inclusion $\Symb\to S_H$ determines an isomorphism in topological $K$-theory
\[ K_1(\Symb)\cong K_1(S_H)\]
The proof of these facts follows standard techniques. 
For the convenience of the reader,  we sketch  the steps in this appendix. A good resource on many of these topics is \cite{IML}.
See also \cite{CGGP92}*{Section 5}.

\vskip 6pt
The continuous seminorms for the Fr\'echet topology of the Weyl algebra $\mcW^0$ are  the optimal constants in the inequalities \eqref{seminorms1} and \eqref{seminorms2}, together with the  $C^\infty$ seminorms of the restrictions of the terms $a_j$ of the asymptotic expansion \eqref{asymptotic expansion}.  

$\mcW^0$ is a  Fr\'echet algebra with this topology, i.e. the product $\#$ is continuous.
Indeed, if 
\[c=a\# b\qquad c\sim c_0+c_{1}+ \ldots\quad a \sim a_0+a_1+\ldots\quad b\sim b_0+b_1 +\ldots\]
then $c_k$ is obtained by applying a bidifferential operator  to $a_i$, $b_j$, $i, j \le k$. It follows that one can bound the $C^\infty$-norms of $c_k$ by the  $C^\infty$ norms of $a_i$, $b_j$. The seminorms of \eqref{seminorms1}, \eqref{seminorms2} can be bound  with the help of estimates for the remainder term in the Weyl product (See \cite{HormIII}*{Theorem 18.5.4}). 

$\mcW_q^0$ is a closed subalgebra of $\mcW^0$, and $\Alg^0$ in turn is a closed subalgebra of  $\mcW_q^0\oplus \left(\mcW_q^0\right)^{op}$. 

\vskip 6pt

The algebra of Schwarz functions $\mcS= \mcS(\mathbb{R}^{2n})$ is a closed two-sided ideal in $\mcW^0$.
Let $\mathcal{I}:=\Op^w(\mcS)$ be the algebra of smoothing operators 
in the Weyl calculus.
The norm closure of $\mcI$ in $\mcB(L^2(\RR^n))$ is the ideal $\KK=\KK(L^2(\RR^n))$ of compact operators.
As a first step, we show that the unitalization $\mcI^+$ is  holomorphically closed in $\KK^+$.

$\mathcal{I}$ itself is not an ideal in $\mathcal{B}(L^2(\RR^n))$. However 
\begin{equation}\label{biideal}
 \forall A_1, A_2 \in \mathcal{I}, B \in \mathcal{B}(L^2(\RR^n)),\ A_1BA_2 \in \mathcal{I}.   
\end{equation}
This follows from the following characterisation of $\mathcal{I}$: An operator $A\in \mcB(L^2(\RR^n))$ is in $\mathcal{I}$ if and only if for every $k$, $l$ the operator $\mathcal{H}^k A \mathcal{H}^l$, defined on the space of Schwarz functions $\mcS(\mathbb{R}^n)$, extends to a bounded operator on $L^2(\mathbb{R}^n)$. Here $\mathcal{H}$ is the harmonic oscillator on $L^2(\RR^n)$.

\begin{lemma}\label{s holomorphically closed}
Let $A\in \mathcal{I} $, so that $1+A$ is a bounded Fredholm operator. Let $P_0$ be the orthogonal projection on $\ker (1+A)$, and $P_1$ the orthogonal projection on $\ker (1+A^*)$.
Choose a parametrix  $1+B \in \mathcal{B}(L^2(\RR^n))$ of  $1+A$ such that
\[(1+B)(1+A)=1-P_0\quad  (1+A)(1+B) =1- P_1\]
Then $B \in \mathcal{I}$.
\end{lemma}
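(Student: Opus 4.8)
The plan is to exploit the characterization of $\mathcal{I}$ given just before the lemma: $A \in \mathcal{I}$ if and only if $\mathcal{H}^k A \mathcal{H}^l$ extends to a bounded operator on $L^2(\RR^n)$ for all $k,l \ge 0$. So the goal reduces to showing that $\mathcal{H}^k B \mathcal{H}^l$ is bounded for all $k,l$, where $1+B$ is the chosen parametrix with $(1+B)(1+A)=1-P_0$ and $(1+A)(1+B)=1-P_1$. First I would record two preliminary facts. Since $P_0$ and $P_1$ are finite-rank orthogonal projections onto subspaces of $\ker(1+A)\subset \mathcal{S}(\RR^n)$ and $\ker(1+A^*)\subset \mathcal{S}(\RR^n)$ respectively (the kernels consist of Schwartz functions because $1+A$ and $1+A^*$ are, modulo smoothing, the identity, and $A,A^*$ map $L^2$ into $\mathcal{S}$), both $P_0, P_1 \in \mathcal{I}$. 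Secondly, from $(1+B)(1+A)=1-P_0$ we get $B = -A - BA - P_0$, and from $(1+A)(1+B)=1-P_1$ we get $B = -A - AB - P_1$.

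Next I would bootstrap. Write $B = -A - BA - P_0$. Here $A, P_0 \in \mathcal{I}$, so it remains to handle $BA$. Multiply the other identity $B = -A - AB - P_1$ on the right by $A$: $BA = -A^2 - ABA - P_1 A$. Again $A^2, P_1 A \in \mathcal{I}$, so the issue is $ABA$. But now apply \eqref{biideal}: $ABA \in \mathcal{I}$ since $A, A \in \mathcal{I}$ and $B \in \mathcal{B}(L^2(\RR^n))$ (the parametrix is bounded). Hence $BA \in \mathcal{I}$, and therefore $B \in \mathcal{I}$. The argument is in fact a short chain of substitutions together with a single application of \eqref{biideal}; the finite-rank corrections $P_0, P_1$ are harmless because they are themselves smoothing.

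I expect the only genuinely substantive point — and the one I would write out with care — is the justification that $P_0, P_1 \in \mathcal{I}$, i.e. that the kernel of $1+A$ (and of $1+A^*$) consists of Schwartz functions and hence the orthogonal projections onto them are smoothing operators. This follows because if $(1+A)u = 0$ then $u = -Au \in \mathcal{S}(\RR^n)$ since $A = \Op^w(a)$ with $a \in \mathcal{S}(\RR^{2n})$ maps $L^2(\RR^n)$ continuously into $\mathcal{S}(\RR^n)$; the same for $A^*$. A finite-rank projection onto a subspace of $\mathcal{S}(\RR^n)$ has Schwartz kernel, so it lies in $\mathcal{I}$. Everything else is formal manipulation of the parametrix identities plus the bi-ideal property \eqref{biideal}, so no delicate estimates are needed.
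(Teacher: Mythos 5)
Your proof is correct and follows essentially the same route as the paper's: establish $P_0,P_1\in\mathcal{I}$ via the Schwartz regularity of the kernels, then use the two parametrix identities and a single application of \eqref{biideal} to the term $ABA$. The only (immaterial) difference is that you isolate $BA$ by multiplying on the right by $A$, whereas the paper isolates $AB$ by multiplying on the left.
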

\begin{proof}
If $(1+A)f=0$ then $f=-Af\in \Sch(\RR^n)$.
If $f_1, f_2, \dots, f_k$ is an orthonormal basis for the kernel of $1+A$, 
then the Schwartz kernel of $P_0$ is $\sum f_j(x)\bar{f_j(y)}\in \Sch(\RR^{2n})$, and so $P_0\in \mcI$.
Likewise  $P_1\in \mcI$.

From $A+B+BA=-P_0$ we get $A^2+AB+ABA=-AP_0$.
Since $ABA\in \mcI$ by \eqref{biideal} we have $AB\in \mcI$. 
Then $A+B+AB=-P_1$ implies $B\in \mcI$.

\end{proof}

\begin{corollary}\label{Iplus}
If  $A \in \mathcal{I}^+$ is invertible  as a bounded operator  on $L^2(\RR^n)$, then $A^{-1}\in \mathcal{I}^+$.
\end{corollary}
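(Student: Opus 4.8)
The plan is to reduce immediately to the situation already handled by Lemma \ref{s holomorphically closed}. A general element of $\mathcal{I}^+$ has the form $A = \lambda\cdot 1 + K$ with $\lambda\in\CC$ and $K\in\mathcal{I}$. First I would observe that if $A$ is invertible on $L^2(\RR^n)$ then $\lambda\neq 0$: otherwise $A = K$ lies in the norm closure of $\mathcal{I}$, which is the ideal $\KK$ of compact operators on $L^2(\RR^n)$, and a compact operator on an infinite-dimensional Hilbert space cannot be invertible. Since $\mathcal{I}^+$ is closed under scalar multiples and $A^{-1} = \lambda^{-1}\bigl(1+\lambda^{-1}K\bigr)^{-1}$, it therefore suffices to treat the case $A = 1 + K$ with $K\in\mathcal{I}$ invertible.

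For this case I would simply apply Lemma \ref{s holomorphically closed} with the role of its ``$A$'' played by $K$. Since $1+K$ is invertible it is in particular Fredholm with $\ker(1+K)=0$ and $\ker(1+K^*)=0$, so the orthogonal projections $P_0$ and $P_1$ in the lemma both vanish. The lemma then provides a parametrix $1+B$ with $B\in\mathcal{I}$ satisfying $(1+B)(1+K)=1$ and $(1+K)(1+B)=1$; that is, $1+B = (1+K)^{-1}$. Hence $(1+K)^{-1}\in\mathcal{I}^+$, and consequently $A^{-1} = \lambda^{-1}+\lambda^{-1}B\in\mathcal{I}^+$ in the general case.

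There is no genuine obstacle here once Lemma \ref{s holomorphically closed} is available: the only points requiring a word are the bookkeeping of the scalar part and the elementary remark that no element of $\mathcal{I}$ is invertible. (The substantive content sits in the lemma itself, which in turn relies on the bi-ideal property \eqref{biideal} of $\mathcal{I}$ inside $\mathcal{B}(L^2(\RR^n))$.) This corollary is then the first step toward the holomorphic closedness of $\Symb$ in $S_H$, which is propagated from $\mathcal{I}^+\subset\KK^+$ up through $\mcW_q^0$, $\Alg^0$, and finally $\Symb$ by standard localization and spectral-invariance arguments.
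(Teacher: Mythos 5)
Your proof is correct and is exactly the deduction the paper intends: the corollary is stated without proof as an immediate consequence of Lemma \ref{s holomorphically closed}, and your reduction (nonzero scalar part because elements of $\mathcal{I}$ are compact, then invertibility forcing $P_0=P_1=0$ so the parametrix is the inverse) is the natural way to fill in the details.
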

Let $W^0$ be the $C^*$-algebra that is the norm closure of $\mcW^0\subset \mcB(L^2(\RR^n))$.
Lemma \ref{s holomorphically closed} together with the Weyl symbolic calculus implies  that $\mcW^0$ is holomorphically closed in $W^0$.

\begin{lemma}
If $a\in \mcW^0$ is such that $A=\Op^w(a)$ is invertible as a bounded operator on  $L^2(\RR^n)$, then $A^{-1} = \Op^w(b)$ for some $b\in \mcW^0$.
\end{lemma}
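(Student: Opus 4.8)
The plan is to construct a parametrix for $A$ within the Weyl calculus $\mcW^0$ and then use the holomorphic closure of the ideal $\mcI$ (Corollary \ref{Iplus}, equivalently Lemma \ref{s holomorphically closed}) to upgrade it to a genuine inverse. First I would note that $A=\Op^w(a)$ is invertible in $W^0$, hence its image $a_0\in C(S^{2n-1})$ under the symbol map $W^0\to C(S^{2n-1})$ is invertible; since the top-order symbol $a_0$ is smooth and nowhere zero, $a_0^{-1}\in C^\infty(S^{2n-1})$, which is the leading symbol of an element $c\in\mcW^0$. Using the asymptotic composition formula \eqref{eqn:asymptotic} one builds, by the standard iterative construction, a full symbol $b_L\in\mcW^0$ (a ``left parametrix symbol'') with $b_L\# a - 1\in\mcS(\RR^{2n})$: at each homogeneity degree one solves for the next term of $b_L$ using that $a_0$ is invertible, and then Borel-sums the series. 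Symmetrically one gets $a\# b_R - 1\in\mcS$, and the usual argument shows $b_L - b_R\in\mcS$, so $B_0:=\Op^w(b_L)$ satisfies $B_0A = 1+R_1$, $AB_0 = 1+R_2$ with $R_1,R_2\in\mcI=\Op^w(\mcS)$.

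Next I would leverage invertibility of $A$ in $\mcB(L^2(\RR^n))$. From $B_0 A = 1+R_1$ we get $B_0 = A^{-1} + R_1 A^{-1}$, and I want to conclude $A^{-1}\in\Op^w(\mcW^0)$. Write $A^{-1} = B_0 - R_1 A^{-1}$; the issue is that $R_1 A^{-1}$ a priori only lies in $\mcB(L^2)$, not in $\mcI$. To handle this, observe $A(1+R_1) = A + AR_1 = (1+R_2)A$ wait — more cleanly: since $1+R_1 = B_0 A$ is a product of an invertible operator $A$ and $B_0$, and $A$ is invertible, $1+R_1$ is invertible in $\mcB(L^2)$; but $R_1\in\mcI\subset\mcI^+$, so by Corollary \ref{Iplus} the inverse $(1+R_1)^{-1}$ lies in $\mcI^+$, i.e. $(1+R_1)^{-1} = 1 + S$ with $S\in\mcI$. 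Then $A^{-1} = (1+R_1)^{-1}B_0 = (1+S)B_0 = B_0 + SB_0$. Now $B_0 = \Op^w(b_L)$ with $b_L\in\mcW^0$, and $SB_0\in\mcI$ because $\mcI$ is a two-sided ideal in $\mcW^0$ (the Schwartz space $\mcS$ is a closed two-sided $\#$-ideal in $\mcW^0$, as recalled in the appendix) — more carefully, $S\in\mcI=\Op^w(\mcS)$ and $B_0\in\Op^w(\mcW^0)$, and $\mcS\#\mcW^0\subset\mcS$. Hence $A^{-1} = \Op^w(b_L + s)$ with $b_L+s\in\mcW^0$, taking $b:= b_L+s$.

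The main obstacle — and the only genuinely nontrivial input — is Corollary \ref{Iplus}, the holomorphic closure of $\mcI^+$ inside $\mcB(L^2(\RR^n))$, which is exactly what Lemma \ref{s holomorphically closed} supplies via the biideal property \eqref{biideal} and the regularity characterization of $\mcI$ in terms of $\ho^k A\ho^l$ being bounded. Granting that, the rest is the routine symbolic parametrix construction, so in the write-up I would spend most of the effort carefully stating the iterative construction of $b_L$ and $b_R$ and the ideal property $\mcS\#\mcW^0\subset\mcS$, $\mcW^0\#\mcS\subset\mcS$ (which follows from $\mcW^z\#\mcW^{z'}\subset\mcW^{z+z'}$ together with $\mcS=\bigcap_m\mcW^m$), and then invoke Corollary \ref{Iplus} at the single crucial point. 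One should also remark, as in the sentence preceding the lemma, that combined with the Weyl calculus this shows $\mcW^0$ is holomorphically closed in $W^0$, and the same argument, applied coordinatewise and using that $\Alg^0$ is a closed subalgebra of $\mcW_q^0\oplus(\mcW_q^0)^{op}$ with $\mcW_q^0$ holomorphically closed in its norm closure, yields the holomorphic closure of $\Symb$ in $S_H$ asserted at the start of the appendix.
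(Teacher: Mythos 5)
Your overall strategy (a symbolic parametrix in $\mcW^0$, then an upgrade to a true inverse via holomorphic closure of $\mcI^+$) matches the paper's, and the construction of $B_0=\Op^w(b_L)$ with $B_0A=1+R_1$, $AB_0=1+R_2$, $R_1,R_2\in\mcI$, is fine. The gap is the assertion that $1+R_1=B_0A$ is invertible in $\mcB(L^2(\RR^n))$ ``because $A$ is invertible.'' A product of an invertible operator with a parametrix is invertible only if the parametrix itself is, and $B_0$ is merely Fredholm of index $0$: nothing prevents it from having a nontrivial kernel and cokernel (perturb any parametrix by a finite-rank element of $\mcI$ to see this). So $1+R_1$ is only Fredholm of index $0$, Corollary \ref{Iplus} does not apply, and you may only invoke Lemma \ref{s holomorphically closed}, which produces $T\in\mcI$ with $(1+T)(1+R_1)=1-P_0$ for a finite-rank projection $P_0\in\mcI$ rather than an exact inverse.

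Once that correction is made, your computation only gives $(1+T)B_0=A^{-1}-P_0A^{-1}$ (equivalently, working on the other side, $A^{-1}-A^{-1}P_1$ with $P_1\in\mcI$), and you are left having to show $P_0A^{-1}\in\mcI$ --- which does \emph{not} follow from the biideal property \eqref{biideal} alone, since that property requires elements of $\mcI$ on \emph{both} sides of the bounded operator. This is exactly where the paper's proof does extra work: writing the corrected parametrix $S$ both as $A^{-1}-A^{-1}P_0$ and as $A^{-1}-P_1A^{-1}$ yields $A^{-1}P_0=P_1A^{-1}$, and then idempotency of $P_0$ gives $A^{-1}P_0=P_1A^{-1}P_0\in\mcI$ by \eqref{biideal}. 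Your write-up is missing both the correction and this final step. (One could instead repair your route by first adjusting $B_0$ by a finite-rank element of $\mcI$ so that $B_0A$ really is invertible --- feasible because $\ker(1+R_1)$ and $\ker(1+R_1^*)$ consist of Schwartz functions --- but that argument would also have to be supplied.)
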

\begin{proof}
We have the short exact sequence
\[ 0\to \KK(L^2(\RR^n))\to W^0\stackrel{\sigma_W}{\lra} C(S^{2n-1})\to 0\]
If $\Op^w(a)$ is invertible in $\mcB(L^2(\RR^n))$, it is invertible in $W^0$.
Then $\sigma_W(a)$ is invertible in $C(S^{2n-1})$.
The principal Weyl symbol $\sigma_W(a)$ is the leading term  $\sigma_W(a) = a_0 \in C^\infty(S^{2n-1})$ in the asymptotic expansion $a\sim a_0+a_1+\cdots$.
Since $\sigma_W(a)$ is invertible in $C(S^{2n-1})$, it is also invertible in $C^\infty(S^{2n-1})$.
In the Weyl symbolic calculus, using \eqref{eqn:asymptotic}, this implies that the full Weyl symbol $a_0+a_1+\cdots$ is invertible.
Then  there exists $r \in \mcW^0$ such that  
\[ 1-a\#r =q_0 \in \mcS\qquad 1-r\#a=q_1 \in \mcS\]
Denote $R:=\Op^w(r)$, $Q_0:=\Op^w(q_0)$, $Q_1:=\Op^w(q_1)$, so that
\[ 1-AR =Q_0 \in \mcI\qquad 1-RA=Q_1 \in \mcI\]
By Lemma \ref{s holomorphically closed} there exists $T \in \mcI$ such that $(1-Q_0)(1+T) =1-P_0$, where $P_0\in \mcI$ is a projection. 
Then $S:=R(1+T)$ is another parametrix for $A$. We have $1-AS=P_0$, while $1-SA=Q_1-RTA\in \mcI$. Let $P_1:=Q_1-RTA$. ($P_1$ is not necessarily a projection.)
Then  $S=A^{-1}-A^{-1}P_0 = A^{-1}-P_1A^{-1}$. Hence $A^{-1}P_0=P_1A^{-1}$. 
Since $P_0$ is an idempotent, $A^{-1}P_0=P_1A^{-1}P_0$.
By \eqref{biideal} we have $P_1A^{-1}P_0 \in \mathcal{I}$, and so $A^{-1}-S =A^{-1}P_0\in \mathcal{I}$.
Therefore $A^{-1} =\Op^w(b)$ for some $b \in \mcW^0$.

\end{proof}
\begin{corollary}
If $a \in \mcW_q^0$ is invertible in $W^0$, then $a^{-1} \in \mcW_q^0$. Similarly,  if $a \in \Alg^0$ is invertible in $A^0$ then $a^{-1} \in \mcA^0$.

\end{corollary}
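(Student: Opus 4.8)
The plan is to deduce both statements from the Lemma just proved, together with the structural fact that $\mcB_q$ is a subalgebra of $\mcB$ and the compatibility of the Moyal product with the anti-involution $\iota$ recorded in Lemma \ref{lem:i}. The point to establish is that inversion, which the Lemma already performs inside $\mcW^0$, automatically respects the step-$2$ polyhomogeneity condition defining $\mcW_q^0$ and the relation $\lambda(w_+)=\iota\circ\lambda(w_-)$ defining $\Alg^0$.

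First I would treat $\mcW_q^0$. If $a\in\mcW_q^0$ is invertible in $W^0$, then, since $W^0$ is a unital $C^*$-subalgebra of $\mcB(L^2(\RR^n))$, $A=\Op^w(a)$ is invertible as a bounded operator, so by the Lemma $A^{-1}=\Op^w(b)$ for some $b\in\mcW^0$, whence $a\#b=b\#a=1$ (Weyl quantization being a bijection). It remains to see $\lambda(b)\in\mcB_q$. For this I would invert $\lambda(a)$ term by term directly inside $\mcB_q$: writing $\lambda(a)=\sum_{j\ge 0}a_{2j}$ with $a_{2j}$ homogeneous of degree $-2j$ and leading term $a_0=\sigma_W(a)\in C^\infty(S^{2n-1})$ invertible (exactly as in the proof of the Lemma), one solves $\lambda(a)\star c=1$ recursively via the Moyal product formula: $a_0c_0=1$, and each $c_{2j}$ for $j\ge 1$ is an explicit expression in $a_0^{-1},a_0,\dots,a_{2j}$ and $c_0,\dots,c_{2j-2}$ built from the bidifferential operators $B_k$. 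Since the $B_k$ send pairs of even-degree homogeneous symbols to even-degree homogeneous symbols, each $c_{2j}$ is well defined, so $c\in\mcB_q$; the analogous recursion on the other side produces a left inverse, so $c$ is a two-sided inverse of $\lambda(a)$ in $\mcB$. As $\lambda(b)$ is also a two-sided inverse, uniqueness gives $\lambda(b)=c\in\mcB_q$, i.e. $b\in\mcW_q^0$.

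For $\Alg^0$, let $a=(w_+,w_-)\in\Alg^0$ be invertible in $A^0$. The two unital homomorphisms $\pi_\pm\colon A^0\to W^0$ sending $(w_+,w_-)$ to $\Op^w(w_+)$ and to $\Op^w(w_-)$ respectively show that $\Op^w(w_+)$ and $\Op^w(w_-)$ are invertible bounded operators; by the first part, $\Op^w(w_+)^{-1}=\Op^w(u_+)$ and $\Op^w(w_-)^{-1}=\Op^w(u_-)$ with $u_+,u_-\in\mcW_q^0$. Since $(\pi_+,\pi_-)\colon A^0\to W^0\oplus W^0$ is injective, it then suffices to check $\lambda(u_+)=\iota\circ\lambda(u_-)$, for this forces $a^{-1}=(u_+,u_-)\in\Alg^0=\mcA^0$. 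That compatibility follows from Lemma \ref{lem:i}: $\iota$ is an anti-involution with $\iota\circ\lambda(w_+)=\lambda(w_-)$ and $\iota(1)=1$, so applying $\iota$ to $\lambda(u_+)\star\lambda(w_+)=1$ and to $\lambda(w_+)\star\lambda(u_+)=1$ shows that $\iota(\lambda(u_+))$ is a two-sided inverse of $\lambda(w_-)$, hence equals $\lambda(u_-)$; applying $\iota$ once more gives $\lambda(u_+)=\iota\circ\lambda(u_-)$.

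I do not expect a genuine obstacle here: the corollary is essentially a formal consequence of the Lemma, using only that the Moyal product preserves even-degree polyhomogeneity and intertwines with $\iota$ — both already in hand. The one step requiring care is the bookkeeping in the recursive inversion, confirming that every new homogeneous term stays in the even-degree part and at the correct filtration level; this is routine and incurs no new estimates.
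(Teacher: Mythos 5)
Your proof is correct, and it fills in exactly the details the paper leaves implicit: the paper states this corollary without proof, treating it as immediate from the preceding lemma, and your two steps (the recursive symbolic inversion staying inside $\mcB_q$ because the bidifferential operators $B_k$ preserve even-degree homogeneity, and the preservation of the compatibility condition $\lambda(w_+)=\iota\circ\lambda(w_-)$ because $\iota$ is a unital anti-involution intertwining the $\star$ product as in Lemma \ref{lem:i}) are the intended ones. No gaps.
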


We will now discuss smooth dependence of inverses on parameters. 
We will use the following  version of \eqref{biideal}, which can be proved in a similar manner:
If $a_1(s)$, $a_2(s)$ are smooth families in $\mcS$ depending on a parameter $s \in \mathbb{R}^m$, and $B(s) \in \mathcal{B}(L^2(\RR^n))$ is a  smooth family of operators with respect to the operator norm, then
there exists a smooth family $c(s)$ of elements in $\mcS$ with
\[\Op^w(a_1(s)) B(s) \Op^w(a_2(s)) =\Op^w(c(s))\]


\begin{lemma}
Let $1+q(s)$, $s \in \mathbb{R}^m$, $q(s) \in \mcS$, be a smooth invertible family. Then $(1+q(s))^{-1} =1+p(s)$, where $p(s)$ is also a smooth family in $\mcS$.
\end{lemma}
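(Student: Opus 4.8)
The plan is to reduce everything to the norm-estimate machinery already in place for the Weyl product and to the holomorphic-closure argument of Lemma \ref{s holomorphically closed}, now carried out with smooth dependence on the parameter $s$. First I would observe that $1+q(s)$ invertible in $\mathcal{B}(L^2(\RR^n))$, with $q(s)\in\mcS$, means $1+q(s)$ is Fredholm of index zero with trivial kernel and cokernel; hence the orthogonal projections $P_0(s)$, $P_1(s)$ onto $\ker(1+q(s))$ and $\ker(1+q(s)^*)$ are zero, and the parametrix of Lemma \ref{s holomorphically closed} is a genuine two-sided inverse. So the content is really: the inverse $1+p(s) := (1+q(s))^{-1}$ has $p(s)\in\mcS$, and $s\mapsto p(s)$ is smooth into $\mcS$.

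The first claim, $p(s)\in\mcS$ for each fixed $s$, is exactly Corollary \ref{Iplus} (applied pointwise in $s$): $1+q(s)\in\mcI^+$ invertible in $\mathcal{B}(L^2)$ forces the inverse into $\mcI^+$, i.e. $p(s)\in\mcS$. For smoothness in $s$, I would write $p = -q + q\#p$ is not quite enough; instead use the resolvent identity. Fix $s_0$; for $s$ near $s_0$ write $1+q(s) = (1+q(s_0))\bigl(1 + (1+q(s_0))^{-1}(q(s)-q(s_0))\bigr)$. Since $q(s)-q(s_0)\to 0$ in $\mcS$ (hence in operator norm) and $(1+q(s_0))^{-1} = 1+p(s_0)\in\mcI^+$, the operator $(1+q(s_0))^{-1}(q(s)-q(s_0)) = (1+p(s_0))\bigl(q(s)-q(s_0)\bigr)$ lies in $\mcS$ by the ideal property \eqref{biideal} (or rather its smooth-family version, with $B(s)$ the constant bounded operator $1+p(s_0)$), and its $\mcS$-seminorms are small for $s$ near $s_0$. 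Therefore $1 + (1+q(s_0))^{-1}(q(s)-q(s_0)) = 1 + \tilde q(s)$ with $\tilde q(s)\in\mcS$ small, and its inverse is $1 + \sum_{k\ge 1}(-1)^k \tilde q(s)^{\#k}$, a Neumann series that converges in the Fréchet algebra $\mcS$ because $\#$ is continuous on $\mcS$ and the $\mcS$-seminorms of $\tilde q(s)$ are $<1$ in the relevant submultiplicative gauge (or, more carefully, because $\mcS$ embeds in the Banach algebra generated by $\mcH^k\,\cdot\,\mcH^l$ and one controls all these norms — this is the standard trick used to prove $\mcW^0$ is Fréchet). Each term $\tilde q(s)^{\#k}$ depends smoothly on $s$ (product of smooth $\mcS$-valued maps via the continuous bilinear map $\#$), and the series converges uniformly in $s$ together with all $s$-derivatives on a neighborhood of $s_0$, so the sum is a smooth $\mcS$-valued function. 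Finally $1+p(s) = \bigl(1+\tilde q(s)\bigr)^{-1}(1+p(s_0))$, a product of two smooth $\mcS^+$-valued maps, hence $p(s)$ is a smooth family in $\mcS$ near $s_0$; since $s_0$ was arbitrary this gives global smoothness.

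The main obstacle is the bookkeeping of seminorms: showing that the Neumann series converges not merely in operator norm but in the Fréchet topology of $\mcS$, uniformly with all $s$-derivatives, so that term-by-term differentiation is justified. This is where the smooth-family strengthening of \eqref{biideal} stated just before the lemma is used, together with the submultiplicativity estimates for $\#$ on $\mcW^0$ recalled in the Appendix (the bidifferential-operator description of $c_k$ in terms of $a_i,b_j$, and the remainder estimate \cite{HormIII}*{Theorem 18.5.4}). These estimates are entirely routine given what has been set up, so the lemma follows without surprises; I would present the resolvent-identity reduction and the Neumann series, and invoke the continuity of $\#$ on $\mcS$ and the smooth version of \eqref{biideal} for the two places where smoothness in $s$ enters.
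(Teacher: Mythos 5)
Your route is viable but genuinely different from, and considerably longer than, the paper's. The paper's entire proof is the algebraic identity
\begin{equation*}
(1+Q(s))^{-1}= 1-Q(s)+Q(s)\,(1+Q(s))^{-1}\,Q(s),\qquad Q(s)=\Op^w(q(s)),
\end{equation*}
which exhibits $p(s)$ as $-q(s)$ plus a term sandwiched between two copies of $Q(s)$; since $(1+Q(s))^{-1}$ is a smooth family in $\mathcal{B}(L^2(\RR^n))$ in operator norm, the smooth-family version of \eqref{biideal} stated just before the lemma immediately gives $p(s)=-q(s)+c(s)$ with $c(s)$ a smooth family in $\mcS$. You instead factor $1+q(s)=(1+q(s_0))(1+\tilde q(s))$ and invert $1+\tilde q(s)$ by a Neumann series in $\mcS$. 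That works, but the convergence step is the one place where care is genuinely needed, and your first justification for it is not right as stated: the Fr\'echet seminorms of $\mcS$ are not submultiplicative, and smallness of $\tilde q(s)$ in those seminorms does not by itself make $\sum_k \tilde q(s)^{\#k}$ converge in $\mcS$. What does work is your parenthetical fallback, i.e. the operator characterization $A\in\mcI \iff \mathcal{H}^kA\mathcal{H}^l$ bounded for all $k,l$: one estimates $\|\mathcal{H}^a\tilde Q(s)^{k}\mathcal{H}^b\|\le \|\mathcal{H}^a\tilde Q(s)\|\cdot\|\tilde Q(s)\|^{k-2}\cdot\|\tilde Q(s)\mathcal{H}^b\|$, so smallness of the \emph{operator norm} alone drives convergence in every one of these norms (and similarly for the $s$-derivatives). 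So your proof closes, but only by invoking exactly the same operator-theoretic input that powers \eqref{biideal}; the sandwich identity lets the paper skip the factorization, the Neumann series, and the term-by-term differentiation altogether. A minor additional remark: where you invoke \eqref{biideal} to see that $(1+p(s_0))(q(s)-q(s_0))\in\mcS$, the bi-ideal property is not needed -- $p(s_0)\in\mcS$ and $\mcS$ is closed under $\#$, so this is immediate.
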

\begin{proof}
Let $Q(s) = \Op^w(q(s))$. Note that $(1+Q(s))^{-1}$ is a smooth family of operators in $\mathcal{B}(L^2(\RR^n))$. Then
\[
(1+Q(s))^{-1}= 1-Q(s)+Q(s) (1+Q(s))^{-1} Q(s).
\]
Hence $p(s)= -q(s)+c(s)$ where $\Op^w(c(s))=Q(s) (1+Q(s))^{-1} Q(s)$.

\end{proof}
 
\begin{proposition}   Assume  that $a(s) \in \mcW^0$, $s\in \mathbb{R}^m$ is a smooth family (in the sense that $a(x,\xi, s)$ is a smooth function on the radial compactification of $\mathbb{R}^{2n}$ times $\mathbb{R}^m$) such that $a(0)$ is invertible. 
Then for $s$ close to $0$ there exists a smooth family $b(s)$ such that $\Op^w(a(s))^{-1}=\Op^w(b(s))$.
\end{proposition}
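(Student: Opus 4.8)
The plan is to imitate the proof of the non-parametrized statement above (the Lemma asserting that $\Op^w(a)$ invertible on $L^2(\RR^n)$ implies $a^{-1}\in\mcW^0$), while tracking smooth dependence on $s$ and invoking the parametrized bi-ideal property recorded just before this proposition.

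First I would reduce to invertibility at the symbol level. Since $a(0)$ is invertible in $W^0$, its principal Weyl symbol $a_0(0)\in C^\infty(S^{2n-1})$ is invertible in $C(S^{2n-1})$, hence in $C^\infty(S^{2n-1})$. Invertibility is an open condition and $s\mapsto a_0(s)$ is continuous, so $a_0(s)$ is invertible in $C^\infty(S^{2n-1})$ for $s$ in a neighbourhood of $0$, with inverse depending smoothly on $s$. Running the standard iterative parametrix construction in the Weyl symbolic calculus, using \eqref{eqn:asymptotic} — each homogeneous component of the parametrix is obtained by applying a fixed bidifferential operator to $a(s)$, to $a_0(s)^{-1}$, and to the previously constructed components — and then Borel summing, produces a smooth family $r(s)\in\mcW^0$ with
\[ 1-a(s)\#r(s)=q_0(s)\in\mcS,\qquad 1-r(s)\#a(s)=q_1(s)\in\mcS,\]
where $q_0(s),q_1(s)$ are smooth families of Schwartz symbols.

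Next I would pass to operators. Write $A(s)=\Op^w(a(s))$, $R(s)=\Op^w(r(s))$, $Q_j(s)=\Op^w(q_j(s))$. The map $s\mapsto A(s)$ is smooth with values in $\mathcal{B}(L^2(\RR^n))$, $A(0)$ is invertible, and on the open set of invertible operators inversion is smooth; shrinking the neighbourhood of $0$, we conclude that $A(s)^{-1}$ is a smooth family of bounded operators. From $1-A(s)R(s)=Q_0(s)$ and $1-R(s)A(s)=Q_1(s)$ one obtains
\[ A(s)^{-1}=R(s)+R(s)Q_0(s)+Q_1(s)\,A(s)^{-1}\,Q_0(s).\]
Here $R(s)Q_0(s)=\Op^w(r(s)\#q_0(s))$ with $r(s)\#q_0(s)\in\mcS$ a smooth family, since $\mcS$ is an ideal in $\mcW^0$; and $Q_1(s)A(s)^{-1}Q_0(s)=\Op^w(c(s))$ for a smooth family $c(s)\in\mcS$ by the parametrized version of \eqref{biideal}. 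Hence $b(s):=r(s)+r(s)\#q_0(s)+c(s)$ is a smooth family in $\mcW^0$ with $\Op^w(a(s))^{-1}=\Op^w(b(s))$ for $s$ near $0$.

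The step I expect to demand the most care is the smooth dependence of the symbolic parametrix $r(s)$ on $s$: one must verify that the recursion defining the homogeneous terms of $r(s)$, and the Borel summation producing an honest symbol from the formal series, can be performed with estimates uniform in $s$ over a neighbourhood of $0$, so that $r(s)$ is smooth on the radial compactification of $\RR^{2n}$ times that neighbourhood. This is routine given the continuity of the Weyl product on $\mcW^0$ established above together with the remainder estimates of \cite{HormIII}*{Theorem 18.5.4}, but it is the place where the parameters genuinely intervene.
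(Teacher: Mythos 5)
Your proposal is correct and follows essentially the same route as the paper: construct a smooth family of symbolic parametrices for $a(s)$ near $s=0$, then absorb the remaining smoothing correction into a smooth family in $\mcS$ using the parametrized version of \eqref{biideal} together with smoothness of $s\mapsto \Op^w(a(s))^{-1}$ in operator norm. The only cosmetic difference is in the last step: you conclude via the identity $A^{-1}=R+RQ_0+Q_1A^{-1}Q_0$ directly, whereas the paper first normalizes the parametrix so that $c(0)=a(0)^{-1}$ (making the remainders vanish at $s=0$, hence small in norm) and then inverts $1-q_1(s)$ by appealing to the preceding lemma on smooth invertible families $1+q(s)$ with $q(s)\in\mcS$.
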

\begin{proof}
First one constructs a smooth family $c(s)$  such that 
$c(0)=a(0)^{-1}$, $1-a(s)\#c(s) =q_0(s) \in \mathcal{S}$, $1-c(s)\#a(s)=q_1(s) \in \mathcal{S}$.
Construct an arbitrary smooth family of parametrices for $a(s)$ and add a constant in $s$ term in $\mathcal{I}$ to achieve $c(0)=a(0)^{-1}$. Note that $q_i(s)$ is smooth in $s$ and $q_i(0)=0$. Hence $\| \Op^w(q_i(s))\| <1$ for small $s$ and $1-\Op^w(q_i(s))$ is an invertible operator. Therefore there exists a smooth family $p(s) \in \mathcal{S}$ such that $(1+p(s))\#(1-q_1(s))=1$. 
If we set $b(s)=(1+p(s))\#(1-q_1(s))$, we have $b(s)\# a(s)=1$ for small $s$. Since $a(s)$ is invertible for small $s$, $b(s)$ is the inverse.

\end{proof}

\begin{corollary}
If $\sigma \in \Symb$  is invertible in $S_H$ then  $\sigma^{-1} \in \Symb$.
\end{corollary}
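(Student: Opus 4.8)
The plan is to derive this last corollary from two ingredients already established: the fibrewise statement that an element of $\Alg^0$ invertible in $A^0$ has its inverse in $\Alg^0$ (the Corollary above), and the parametrized invertibility result (the Proposition above), which produces inverses depending smoothly on a parameter. The point is that invertibility in $S_H$ is a pointwise condition, so the fibrewise corollary immediately places each $\sigma^{-1}(p)$ in the correct fibre, and the only real work is to check that $p\mapsto\sigma^{-1}(p)$ is again a \emph{smooth} section, i.e. an element of $\Symb$.

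First I would recall that $S_H$ is the section algebra of a continuous field of $C^*$-algebras over $M$ whose fibre at $p$ is $A^0(H_p^*,\omega_p)$, and that for each $p$ the evaluation $S_H\to A^0(H_p^*,\omega_p)$ is a $*$-homomorphism. Hence if $\sigma\in\Symb$ is invertible in $S_H$, then $\sigma(p)$ is invertible in $A^0(H_p^*,\omega_p)$ for every $p\in M$, with inverse the evaluation of $\sigma^{-1}$. Since $\sigma(p)\in\Alg^0(H_p^*,\omega_p)$, the Corollary above gives $\sigma^{-1}(p)\in\Alg^0(H_p^*,\omega_p)$ for every $p$. So it remains to prove smoothness of $p\mapsto\sigma^{-1}(p)$, which is a local question: cover $M$ by finitely many open sets $U$ over which the principal bundle $P_H$, hence the symplectic bundle $(H,\omega)$, is trivialized; over such a $U$ the algebra of sections is identified with smooth families $p\mapsto a(p)\in\Alg^0(\RR^{2n},\omega)$, and $\sigma|_U$ becomes such a family with $a(p)$ invertible in $A^0(\RR^{2n},\omega)$ for all $p\in U$.

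Next I would upgrade the Proposition above to an $\Alg^0$-valued statement: given a smooth family $(w_+(s),w_-(s))\in\Alg^0$ with $(w_+(0),w_-(0))$ invertible in $A^0$, there is for $s$ near $0$ a smooth family of inverses lying in $\Alg^0$. This follows by applying the Proposition to each component $w_\pm(s)\in\mcW^0$ to obtain smooth families $b_\pm(s)\in\mcW^0$ with $\Op^w(b_\pm(s))=\Op^w(w_\pm(s))^{-1}$; for each fixed $s$ the fibrewise Corollary identifies the inverse of $(w_+(s),w_-(s))$ in $A^0$ with an element of $\Alg^0$, and since $\Op^w$ is injective on symbols the pair $(b_+(s),b_-(s))$ must coincide with that element, hence lies in $\Alg^0$. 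Applying this around each point $p_0\in U$ (invertibility being an open condition, one may shrink $U$) shows that $p\mapsto\sigma^{-1}(p)$ is smooth on a neighbourhood of $p_0$; by uniqueness of inverses these locally smooth sections agree on overlaps and patch to a global smooth section, so $\sigma^{-1}\in\Symb$.

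The main obstacle is exactly the passage from the pointwise statement to smoothness in $p$: one cannot merely quote the scalar parametrized lemma but must combine it with the fibrewise identification to keep the inverses inside $\Alg^0$, and one must use a local symplectic trivialization of $P_H$ to cast $\sigma$ as a genuine parameter-dependent symbol. Everything else — the continuous-field description of $S_H$, the $*$-homomorphism property of evaluation, and the patching — is routine bookkeeping.
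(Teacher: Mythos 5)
Your argument is correct and is exactly the route the paper intends: the corollary is stated there as an immediate consequence of the preceding Proposition on smooth families, with the fibrewise holomorphic closure of $\Alg^0$ in $A^0$ supplying pointwise membership and the parametrized inversion (in a local symplectic trivialization of $P_H$) supplying smoothness in $p$. Your filling-in of the componentwise application of the Proposition and the patching is precisely the bookkeeping the paper leaves to the reader.
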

For the Frechet algebra $\Symb$, the topological $K$-theory is defined as
\[K_1(\Symb):= \pi_0(GL(\Symb))\]
where  $GL(\Symb)$ is the topological group obtained as the direct limit $\varinjlim GL(r,\Symb)$.
We  have (see \cite{Co94}*{III. Appendix C}):
\begin{corollary}
The inclusion $\Symb \to S_H$ induces an isomorphism in topological $K$-theory,
\[
K_1(\Symb) \cong K_1(S_H).
\]

\end{corollary}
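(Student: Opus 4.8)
The plan is to deduce the isomorphism from the spectral invariance of $\Symb$ in $S_H$ that the preceding corollaries of this appendix establish, together with the standard principle that a dense, holomorphically stable Fr\'echet subalgebra of a $C^*$-algebra has the same topological $K$-theory (see \cite{Co94}*{III. Appendix C}; a general discussion is in \cite{IML}). The first point to record is that all the arguments of this appendix pass to matrix algebras: for each $r$ one has the dense inclusion $M_r(\Symb)\subset M_r(S_H)$, the ideal of smoothing operators becomes $M_r(\mcS)$, and the parametrix estimates tensor with $M_r(\CC)$ without change. Hence an element of $M_r(\Symb)$ that is invertible in $M_r(S_H)$ has its inverse in $M_r(\Symb)$, i.e. $GL(r,\Symb)=GL(r,S_H)\cap M_r(\Symb)$.

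Next I would upgrade this pointwise statement using the Proposition on smooth dependence on parameters at the end of this appendix: if $a(s)\in M_r(\Symb)$, with $s$ in a parameter space near $0$, is a continuous family (a smooth one, in the first instance; the continuous case follows by approximating by smooth families, or by rerunning the same parametrix estimates) with $a(0)$ invertible in $M_r(S_H)$, then $a(s)$ is invertible in $M_r(\Symb)$ for $s$ near $0$ and $a(s)^{-1}$ depends continuously on $s$. In particular $GL(r,\Symb)$ is open in the Fr\'echet space $M_r(\Symb)$, and the inclusion $GL(r,\Symb)\hookrightarrow GL(r,S_H)$ admits local continuous sections. The usual smoothing argument then gives that this inclusion is a weak homotopy equivalence: a continuous path in $GL(r,S_H)$ between two points of $GL(r,\Symb)$ is uniformly approximated by a path in $M_r(\Symb)$ by density, and, lying close to a path of invertibles, that approximation again takes values in $GL(r,\Symb)$; the same reasoning applied to homotopies of such paths shows that $\pi_0$ --- indeed every $\pi_k$ --- of $GL(r,\Symb)$ maps bijectively onto that of $GL(r,S_H)$. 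Passing to the direct limit $\varinjlim_r$ yields $\pi_0(GL(\Symb))\cong\pi_0(GL(S_H))$, which by the definition of topological $K$-theory recalled above is exactly $K_1(\Symb)\cong K_1(S_H)$.

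The step that needs the most care --- and the only one I would write out in detail --- is this passage from ``invertibility is preserved'' (a statement about single elements, already proved) to the homotopy-theoretic statement about $GL$. Its content lies entirely in the smooth-dependence Proposition: that inverses exist and vary continuously in families is precisely what promotes the pointwise result to openness of $GL(r,\Symb)$ with local sections, and hence to a $\pi_*$-isomorphism; everything else is the standard machinery for holomorphically stable Fr\'echet subalgebras, which I would simply invoke. One bookkeeping point deserves mention: $S_H$ is the section algebra of a continuous field of $C^*$-algebras over $M$ and $\Symb$ the corresponding algebra of smooth sections, so all estimates above are taken uniformly in $p\in M$, using $\|\sigma\|=\sup_{p\in M}\|\sigma(p)\|$ and the Fr\'echet seminorms of $\Symb=C^\infty(S^*M)$; this uniformity is already built into the definitions recalled in Section \ref{section_Ktheory} and in this appendix, so no new analytic input is required.
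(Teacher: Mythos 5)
Your proposal is correct and follows essentially the same route as the paper: the paper deduces the corollary from the preceding spectral-invariance results by citing the standard density theorem for holomorphically closed dense Fr\'echet subalgebras (\cite{Co94}*{III. Appendix C}), which is exactly the machinery you invoke and unfold (matrix stability, openness of $GL(r,\Symb)$ via the smooth-dependence proposition, and the approximation argument on $\pi_0$).
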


\bibliographystyle{amsplain}

\bibliography{MyBibfile}

\end{document}